\documentclass[final,leqno]{siamltex}

\usepackage{mathtools}
\usepackage{amssymb}
\usepackage{stmaryrd}
\usepackage{color}
\usepackage{pdflscape}
\usepackage{epsfig}

\usepackage{epstopdf}
\usepackage{multirow}
\usepackage{hhline}
\usepackage{txfonts}
\usepackage{mathrsfs}
\usepackage{color}

\usepackage{subcaption}

\usepackage{mathrsfs}
\newtheorem{thm}{Theorem}[section]

\newtheorem{lem}[thm]{Lemma}
\newtheorem{rem}[thm]{Remark}

\numberwithin{equation}{section}

\newcommand{\norm}[1]{\left\Vert#1\right\Vert}
\newcommand{\abs}[1]{\left\vert#1\right\vert}

\title{Petrov-Galerkin and Spectral Collocation Methods for distributed Order Differential Equations} 

\author{
Ehsan Kharazmi
\footnote{D\lowercase{epartment of} C\lowercase{omputational} M\lowercase{athematics}, S\lowercase{cience}, \lowercase{and}, E\lowercase{ngineering} \& D\lowercase{epartment of} M\lowercase{echanical} E\lowercase{ngineering},	
M\lowercase{ichigan} S\lowercase{tate} U\lowercase{niversity}, 428 S S\lowercase{haw} L\lowercase{ane}, E\lowercase{ast} L\lowercase{ansing}, MI 48824, USA}
, Mohsen Zayernouri
\footnote{D\lowercase{epartment of} C\lowercase{omputational} M\lowercase{athematics}, S\lowercase{cience}, \lowercase{and}, E\lowercase{ngineering} \&
D\lowercase{epartment of} M\lowercase{echanical} E\lowercase{ngineering},	
M\lowercase{ichigan} S\lowercase{tate} U\lowercase{niversity}, 428 S S\lowercase{haw} L\lowercase{ane}, E\lowercase{ast} L\lowercase{ansing}, MI 48824, USA,  C\lowercase{orresponding author; zayern@msu.edu}}
 AND George Em Karniadakis
\footnote{D\lowercase{ivision of} A\lowercase{pplied} M\lowercase{athematics}, B\lowercase{rown} U\lowercase{niversity}, 182 G\lowercase{eorge} S\lowercase{treet}, P\lowercase{rovidence}, RI 02912, USA}
}

\begin{document}
\maketitle
\begin{abstract}
Distributed order fractional operators offer a rigorous tool for mathematical modelling of multi-physics phenomena, where the differential orders are distributed over a range of values rather than being just a fixed integer/fraction as it is in standard/fractional ODEs/PDEs. We develop two spectrally-accurate schemes, namely a Petrov-Galerkin spectral method and a spectral collocation method for distributed order fractional differential equations. These schemes are developed based on the fractional Sturm-Liouville eigen-problems (FSLPs) \cite{Zayernouri2013} . In the Petrov-Galerkin method, we employ fractional (non-polynomial) basis functions, called \textit{Jacobi poly-fractonomials}, which are the eigenfunctions of the FSLP of first kind, while, we employ another space of test functions as the span of poly-fractonomial eigenfunctions of the FSLP of second kind. We define the underlying \textit{distributed Sobolev space} and the associated norms, where we carry out the corresponding discrete stability and error analyses of the proposed scheme. In the collocation scheme, we employ fractional (non-polynomial) Lagrange interpolants satisfying the Kronecker delta property at the collocation points. Subsequently, we obtain the corresponding distributed differentiation matrices to be employed in the discretization of the strong problem. We perform systematic numerical tests to demonstrate the efficiency and conditioning of each method.

\end{abstract}
\begin{keywords}
Distributed Sobolev space, distributed bilinear forms, modal/nodal basis, fractional Lagrange interpolants, spectral convergence, stability/error analysis, uncertainty quantification
\end{keywords}
\begin{AMS}
34L10, 58C40, 34K28, 65M70, 65M60  
\end{AMS}
\pagestyle{myheadings}
\thispagestyle{plain}

%
\section{Introduction}
\label{Sec: Introduction}
%
Fractional differential equations (FDEs) seamlessly generalize the notion of standard (integer-order) differential equations to those of fractional order \cite{samkofractional1993,Kilbass2006,Podlubny99}. There exists plenty experimental evidence revealing the anomalous transport and nonlocal history dependent effect in complex physical systems. Fractional calculus and FDEs open up new possibilities for robust modeling of such complex multi-scale problems. Examples include: non-Gaussian (L\'{e}vy flights) processes in turbulent flows \cite{sreenivasan1997phenomenology, jha2003evidence, Castillo2004Plasma}, non-Newtonian fluids and rheology \cite{jaishankar2013,naghibolhosseini2015estimation}, non-Brownian transport phenomena in porous and disordered materials \cite{meer01, meral2010fractional}, and non-Markovian processes in multi-scale complex fluids and multi-phase applications \cite{jaishankar2014}. Over the past two decades, an extensive amount of work has been done developing numerical schemes for FDEs such as variational iteration method \cite{inc2008approx}, homotopy perturbation method \cite{sweilam2007numerical}, Adomian’s decomposition method \cite{jafari2006solving}, homotopy analysis method \cite{hashim2009homotopy} and collocation method \cite{Rawashdeh2006}. While most of the attention has been devoted to the finite difference methods (FDMs), \cite{Lubich1983, Lubich1986, Sanz1988, Sugimoto1991, Metzler2000, Gorenflo2002, Diethelm2004, Langlands2005, Sun2006,  Lin2007, wang2010direct, wang2011fast, Huang2012, Cao2013, zeng2015numerical, zayernouri2016_JCP_Frac_AB_AM},
recent works have focused on exploring the potential efficiency of spectral methods and their inherent global nature as more suitable to discretizing FDEs, see e.g., \cite{Sugimoto1991, Rawashdeh2006, Lin2007, Khader2011, Khader2012, Li2009, Li2010, chen2014generalized, wang2015high, bhrawy2015spectral}.

Two new spectral theories on fractional and tempered fractional Sturm-Liouville problems (TFSLPs) have been recently developed by Zayernouri et al. in \cite{Zayernouri2013, zayernouri2015tempered}. This approach first fractionalizes and then tempers the well-known theory of Sturm-Liouville eigen-problems. The explicit eigenfunctions of TFSLPs are analytically obtained in terms of \textit{tempered Jacobi poly-fractonomials}. Recently, in \cite{Zayernouri_FODEs_2014, Zayernouri14-SIAM-Frac-Delay, Zayernouri14-SIAM-Frac-Advection}, Jacobi poly-fractonomials were successfully employed in developing a series of high-order and efficient Petrov-Galerkin spectral and discontinuous spectral element methods of Galerkin and Petrov-Galerkin projection type for fractional ODEs. To treat nonlinear problems the \textit{collocation} schemes are relatively easy to implement. Khader in \cite{Khader2011} presented a Chebyshev collocation method for the discretization of the space-fractional diffusion equation. More recently, Khader and Hendy \cite{Khader2012} developed a Legendre pseudospectral method for fractional-order delay differential equations. For fast treatment of nonlinear and multi-term fractional PDEs such as the fractional Burgers’ equation, a new spectral method, called \textit{fractional spectral collocation method}, was developed in \cite{Zayernouri14-SIAM-Collocation}. This new class of collocation schemes introduces a new family of fractional Lagrange interpolants, mimicking the structure of the Jacobi poly-fractonomials. For variable-order fractional PDEs, a fast and spectrally accurate collocation method was developed and implemented in \cite{ZayernouriVariableOrder2015JCP}.

Distributed order fractional operators offer a rigorous tool for mathematical modeling of multi-physics phenomena. In this case, the differential order is distributed over a range of values rather than being just a fixed fraction as it is in standard/fractional ODEs/PDEs. There is a rapidly growing interest in the use of fractional derivatives in the construction of mathematical models, which contain distributed order terms of the form
\begin{equation}
	\nonumber
	\int_{\sigma_1}^{\sigma_2} \phi(\sigma) \prescript{*}{a}{\mathcal{D}}_{t}^{\sigma} u(t) d\sigma = f(t), \quad t > a,
\end{equation}
in  the field of uncertainty quantification as the inherent uncertainty of experimental data can be directly incorporated into the differential operators; see \cite{mainardi2007Distributed, diethelm2009Numerical, atanackovic2009existence, atanackovic2009timedis, srokowski2008levy}, for some work on numerical methods. Almost all of the numerical schemes developed for such models are finite-difference methods. While the treatment of fractional differential equations with a fixed fractional order could be memory demanding due to the locality of these methods and their low-accuracy, the main challenge remains the additional effect of the discretization of the distributed order model, which may lead to exceeding computational cost of numerical simulations.

To the best of our knowledge, the first numerical study of distributed order differential equations (DODEs) was performed by Diethelm and Ford in \cite{diethelm2001numerical}, where a two-stage basic framework was developed. In the first stage, the distributed order differentiation term was approximated using a quadrature rule, and in the second stage, a suitable multi-term numerical method was employed. They later performed the corresponding error analysis of the method in \cite{diethelm2009Distributed}. Subsequently, most of the numerical studies have followed the same approach yet they vary in the discretization method in the second stage. The distributed order time-fractional diffusion equation was numerically studied in \cite{ford2014numerical} and the corresponding stability and convergence study of the scheme was provided in \cite{ford2015implicit}. Adding a nonlinear source, \cite{morgado2015numerical} studied the distributed order reaction diffusion equation following the same scheme. In \cite{li2016numerical}, the second stage of the distributed order diffusion equation was established using a reproducing kernel method. The distributed order time fractional diffusion-wave equation was investigated by developing a compact difference scheme in \cite{ye2015compact}. Other numerical studies include: an implicit numerical method of a temporal distributed order and two-sided space-fractional advection-dispersion equation in \cite{hu2015implicit}, high-order difference schemes in \cite{gao2015some}, alternating direction implicit (ADI) difference schemes with the extrapolation method for one-dimensional case in \cite{gao2015one} and two-dimensional problem in \cite{gao2015two}, and an operational matrix technique in \cite{duong2016deterministic}.

In this paper, we first introduce the \textit{distributed Sobolev spaces} and their associated norms. We show their equivalence to the defined left-side and right-side norms as well. By employing Riemann-Liouville derivatives, we define the distributed order differential equation and then obtain its variational form. We develop a Petrov-Galerkin (PG) spectral method following the recent theory of fractional Sturm-Liouville eigen-problems (FSLP) in \cite{Zayernouri2013} and employ the corresponding eigenfunctions, namely the \textit{Jacobi Poly-fractonomial}s of first kind as the bases and the \textit{Jacobi Poly-fractonomial}s of second kind as test functions. We develop a spectrally accurate Gauss-Legendre quadrature rule in the construction of the linear system, where we investigate the stability and error analysis of the scheme. In addition, we construct a spectrally-accurate fractional spectral collocation scheme, where we employ fractional Lagrange interpolants satisfying the Kronecker delta property at the collocation points, and then, we obtain the corresponding fractional differentiation matrices. We demonstrate the computational efficiency of both schemes considering several numerical examples and distribution functions.

The organization of the paper is as follows: section \ref{Sec: Fractional Calculus} provides preliminary definitions along with useful lemmas. We recall fractional Sobolev spaces, and then, introduce their generalization to so called \textit{distributed Sobolev space} and associated norms, which provides the natural setting of our problem in this study. We furthermore obtain some equivalent norms to facilitate the corresponding analysis of our methods. In section \ref{Sec: Variational}, we  derive and discretize the corresponding variational form of the problem and subsequently we prove the stability and convergence rate of the scheme. In addition, we develop a fractional collocation method in section \ref{Sec: Modal: Collocation Method} and test the performance of the two methods in section \ref{Sec: Numerical Simulation}. We conclude the paper with a summary and conclusion.


%
\section{Definitions}
\label{Sec: Fractional Calculus}
%

Let $ \xi \in [-1,1]$. Then, the left-sided and right-sided Riemann-Liouville integral of order $\sigma$,  $n-1 < \sigma \leq n$, $n \in \mathbb{N}$, are defined (see e.g., \cite{Miller93, Podlubny99}) respectively as
\begin{align}
	\label{Eq: left RL integral}
	(\prescript{RL}{-1}{\mathcal{I}}_{\xi}^{\sigma}) u(\xi) = \frac{1}{\Gamma(\sigma)} \int_{-1}^{\xi} \frac{u(s) ds}{(\xi - s)^{n-\sigma} },\quad \xi>-1 ,
	\\
	\label{Eq: right RL integral}
	(\prescript{RL}{\xi}{\mathcal{I}}_{1}^{\sigma}) u(\xi) = \frac{1}{\Gamma(\sigma)} \int_{\xi}^{1} \frac{u(s) ds}{(s - \xi)^{n-\sigma} },\quad \xi<1.
\end{align}
The corresponding left-sided and right-sided fractional derivative of order $\sigma$ are then defined, as  
\begin{align}
	\label{Eq: left RL derivative}
	(\prescript{RL}{-1}{\mathcal{D}}_{\xi}^{\sigma}) u(\xi) = \frac{d^n}{d\xi^n} (\prescript{RL}{-1}{\mathcal{I}}_{\xi}^{n-\sigma} u) (\xi) = \frac{1}{\Gamma(n-\sigma)}  \frac{d^{n}}{d\xi^n} \int_{-1}^{\xi} \frac{u(s) ds}{(\xi - s)^{\sigma+1-n} },\quad \xi >-1 ,
	\\
	\label{Eq: right RL derivative}
	(\prescript{RL}{\xi}{\mathcal{D}}_{1}^{\sigma}) u(\xi) = \frac{(-d)^n}{d\xi^n} (\prescript{RL}{\xi}{\mathcal{I}}_{1}^{n-\sigma} u) (\xi) = \frac{1}{\Gamma(n-\sigma)}  \frac{(-d)^{n}}{d\xi^n} \int_{\xi}^{1} \frac{u(s) ds}{(s - \xi)^{\sigma+1-n} },\quad \xi < 1 ,
\end{align}
respectively. We recall a useful property of the  Riemann-Liouville fractional derivatives \cite{Podlubny99}. Assume that $0<p<1$ and $0<q<1$ and $g(x_L) = 0\,\,\,x>x_L$, then
\begin{equation}
	\label{Eq: Property of RL deriv}
	\prescript{}{x_L}{\mathcal{D}}_{x}^{p+q} g(x) =
	\left( \prescript{}{x_L}{\mathcal{D}}_{x}^{p} \, \prescript{}{x_L}{\mathcal{D}}_{x}^{q} \,\,g\right) (x) 
	= \left( \prescript{}{x_L}{\mathcal{D}}_{x}^{q} \, \prescript{}{x_L}{\mathcal{D}}_{x}^{p} \,\, g \right) (x).
\end{equation}

An alternative approach in defining the fractional derivatives is to begin with the  left-sided Caputo derivatives of order $\sigma$, $n-1 < \sigma \leq n$, $n \in \mathbb{N}$, defined, as
\begin{equation}
	\label{Eq: left Caputo derivative}
	(\prescript{C}{-1}{\mathcal{D}}_{\xi}^{\sigma} u) (\xi) = (\prescript{}{-1}{\mathcal{I}}_{\xi}^{n-\sigma} \frac{d^n u}{d\xi^n}) (\xi) = \frac{1}{\Gamma(n-\sigma)}  \int_{-1}^{\xi} \frac{u^{(n)}(s) ds}{(\xi - s)^{\sigma+1-n} },\quad \xi>-1. 
\end{equation}
By performing an affine mapping from the standard domain $[-1,1]$ to the interval $t \in [a,b]$, we obtain
\begin{eqnarray}
	\label{Eq: RL in xL-xR}
	\prescript{RL}{a}{\mathcal{D}}_{t}^{\sigma} u  &=&  (\frac{2}{b-a})^\sigma (\prescript{RL}{-1}{\mathcal{D}}_{\xi}^{\sigma} \, u )(\xi), 
	\\ 
	\label{Eq: Caputo in xL-xR}
	\prescript{C}{a}{\mathcal{D}}_{t}^{\sigma} u  &=&  (\frac{2}{b-a})^\sigma (\prescript{C}{-1}{\mathcal{D}}_{\xi}^{\sigma} \, u) (\xi).
\end{eqnarray} 
Hence, we can perform the operations in the standard domain only once for any given $\sigma$ and efficiently utilize them on any arbitrary interval without resorting to repeating the calculations. Moreover, the corresponding relationship between the Riemann-Liouville and Caputo fractional derivatives in $[a,b]$ for any $\sigma \in (0,1)$ is given by 
\begin{equation}
	\label{Eq:  Caputo vs. Riemann}
	(\prescript{RL}{a}{\mathcal{D}}_{t}^{\sigma} \, u) (t)  =  \frac{ u(a)}{\Gamma(1-\sigma) (t-a)^{\sigma}}  +   (\prescript{C}{a}{\mathcal{D}}_{t}^{\sigma} \, u) (t).
\end{equation}

\vspace{0.2cm}
\begin{lem}
	\label{Lem: Frac derive of Poly-fractonomials}
	Let $\sigma , \mu > 0$. The fractional derivative of the Jacobi poly-fractonomials, \cite{Zayernouri2013}, of first ($i=1$) and second kind ($i=2$) are given by
	\begin{eqnarray}
		\label{Eq: in Lemma 1Presenving structure}
		\prescript{RL}{} {\mathcal{D}}_{}^{\sigma} 
		\Big\lbrace
		\prescript{(i)}{}{ \mathcal{P}}_{n}^{\mu}(\xi)
		\Big\rbrace  &=& \frac{\Gamma(n +\mu)}{\Gamma(n +\mu - \sigma)}  \prescript{(i)}{}{ \mathcal{P}}_{n}^{\, \mu - \sigma}(\xi),
	\end{eqnarray}
	and are also of Jacobi poly-fractonomial type, where $\prescript{RL}{} {\mathcal{D}}_{}^{\sigma}  \equiv \prescript{RL}{-1} {\mathcal{D}}_{x}^{\sigma}$ when $i=1$, $\prescript{RL}{} {\mathcal{D}}_{}^{\sigma}  \equiv \prescript{RL}{x} {\mathcal{D}}_{1}^{\sigma}$ when $i=2$.
\end{lem}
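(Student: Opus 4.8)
The plan is to reduce everything to the action of the Riemann-Liouville operator on pure powers and then to resum. First I would recall the explicit form of the first-kind poly-fractonomial from \cite{Zayernouri2013}, namely $\prescript{(1)}{}{\mathcal{P}}_{n}^{\mu}(\xi) = (1+\xi)^{\mu}\,P_{n-1}^{-\mu,\mu}(\xi)$, and expand its Jacobi-polynomial factor as a finite sum in powers of $(1+\xi)$, so that $\prescript{(1)}{}{\mathcal{P}}_{n}^{\mu}(\xi) = \sum_{k=0}^{n-1} c_{k}\,(1+\xi)^{\mu+k}$, with coefficients $c_{k}$ read off from the terminating hypergeometric representation of $P_{n-1}^{-\mu,\mu}$. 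Anchoring the expansion at the left endpoint $\xi=-1$ is the natural choice, since $\prescript{RL}{-1}{\mathcal{D}}_{\xi}^{\sigma}$ has its base point there.

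The key building block is the monomial rule
\begin{equation}\nonumber
\prescript{RL}{-1}{\mathcal{D}}_{\xi}^{\sigma} (1+\xi)^{\beta} = \frac{\Gamma(\beta+1)}{\Gamma(\beta-\sigma+1)}\,(1+\xi)^{\beta-\sigma},\qquad \beta>-1,
\end{equation}
which I would establish directly from \eqref{Eq: left RL derivative}: the substitution $s=-1+(1+\xi)\tau$ turns the integral defining $\prescript{RL}{-1}{\mathcal{I}}_{\xi}^{n-\sigma}$ into a Beta integral, and the subsequent $n$-fold differentiation supplies the remaining Gamma ratio. Applying this rule term by term to the expansion yields
\begin{equation}\nonumber
\prescript{RL}{-1}{\mathcal{D}}_{\xi}^{\sigma} \prescript{(1)}{}{\mathcal{P}}_{n}^{\mu}(\xi) = (1+\xi)^{\mu-\sigma}\sum_{k=0}^{n-1} c_{k}\,\frac{\Gamma(\mu+k+1)}{\Gamma(\mu-\sigma+k+1)}\,(1+\xi)^{k}.
\end{equation}

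The genuinely load-bearing step is the resummation: I must show that the bracketed series, after pulling out the constant $\Gamma(n+\mu)/\Gamma(n+\mu-\sigma)$, is exactly the power series of $P_{n-1}^{-(\mu-\sigma),\,\mu-\sigma}(\xi)$, so that the right-hand side collapses to $\frac{\Gamma(n+\mu)}{\Gamma(n+\mu-\sigma)}\,(1+\xi)^{\mu-\sigma}\,P_{n-1}^{-(\mu-\sigma),\,\mu-\sigma}(\xi) = \frac{\Gamma(n+\mu)}{\Gamma(n+\mu-\sigma)}\,\prescript{(1)}{}{\mathcal{P}}_{n}^{\mu-\sigma}(\xi)$. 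Concretely, the factor $\Gamma(\mu+k+1)/\Gamma(\mu-\sigma+k+1)$ shifts the parameters of the terminating ${}_2F_1$ representing $P_{n-1}^{-\mu,\mu}$ precisely into those of $P_{n-1}^{-(\mu-\sigma),\,\mu-\sigma}$; this is a parameter-shift (contiguous-relation) identity for the Gauss hypergeometric series, and checking it coefficient by coefficient is where the real work lies. I expect this matching to be the main obstacle, because it relies on the special $(-\mu,\mu)$ structure of the Jacobi indices conspiring with the Gamma ratio, and it would fail for generic Jacobi parameters.

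Finally, for the second-kind case $i=2$ I would argue by the reflection $\xi\mapsto-\xi$, which interchanges the left- and right-sided operators of \eqref{Eq: left RL derivative}--\eqref{Eq: right RL derivative} and maps $\prescript{(1)}{}{\mathcal{P}}_{n}^{\mu}$ onto $\prescript{(2)}{}{\mathcal{P}}_{n}^{\mu}$ up to the sign $(-1)^{n-1}$, so that the identity transfers verbatim. Alternatively, one repeats the computation with the mirror monomial rule $\prescript{RL}{\xi}{\mathcal{D}}_{1}^{\sigma}(1-\xi)^{\beta} = \frac{\Gamma(\beta+1)}{\Gamma(\beta-\sigma+1)}(1-\xi)^{\beta-\sigma}$ and the expansion anchored at $\xi=1$, which leaves the resummation step structurally identical.
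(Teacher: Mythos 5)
Your proof is correct, but it takes a genuinely different route from the paper's. The paper never expands in monomials: it starts from Askey's fractional-integral identities for Jacobi polynomials (cited to Askey, 1969), rewrites the left-sided one as the Riemann--Liouville statement $\prescript{RL}{-1}{\mathcal{I}}_{x}^{\sigma}\big\lbrace (1+x)^{\beta}P_n^{\alpha,\beta}(x)\big\rbrace = \frac{\Gamma(n+\beta+1)}{\Gamma(n+\beta+\sigma+1)}(1+x)^{\beta+\sigma}P_n^{\alpha-\sigma,\beta+\sigma}(x)$, applies $\prescript{RL}{-1}{\mathcal{D}}_{x}^{\sigma}$ to both sides to invert the integral, and then specializes $\beta+\sigma=\mu$, $\alpha-\sigma=-\mu$ with the index shift $n\mapsto n-1$; the case $i=2$ is handled by the mirror right-sided Askey formula rather than by reflection. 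Your route replaces the appeal to Askey's identity with an explicit hypergeometric computation, and it does close: writing $P_{n-1}^{-\mu,\mu}(\xi)=(-1)^{n-1}\frac{(\mu+1)_{n-1}}{(n-1)!}\,{}_2F_1\big(-(n-1),\,n;\,\mu+1;\,\tfrac{1+\xi}{2}\big)$, the Gamma ratio from the monomial rule combines with the Pochhammer denominator as $\frac{\Gamma(\mu+k+1)}{\Gamma(\mu-\sigma+k+1)}\cdot\frac{1}{(\mu+1)_k}=\frac{\Gamma(\mu+1)}{\Gamma(\mu-\sigma+1)}\cdot\frac{1}{(\mu-\sigma+1)_k}$, i.e.\ precisely the lower-parameter shift $\mu+1\mapsto\mu-\sigma+1$, and the accumulated constants assemble into $\Gamma(n+\mu)/\Gamma(n+\mu-\sigma)$. (Strictly speaking this is a non-integer parameter shift rather than a contiguous relation, but your coefficientwise matching is the right mechanism, and your remark that it hinges on the $(-\mu,\mu)$ index structure is exactly the point: that structure keeps the upper parameters $-(n-1)$ and $n$ independent of $\mu$, so only the lower parameter moves.) Your reflection argument for $i=2$ is also sound, since $\xi\mapsto-\xi$ intertwines the left- and right-sided operators and the two factors of $(-1)^{n-1}$ cancel. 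What each approach buys: the paper's proof is shorter and outsources the hard identity to a classical reference, while yours is self-contained from the definition \eqref{Eq: left RL derivative} and makes visible exactly where the special index structure is used, at the cost of carrying the hypergeometric bookkeeping yourself.
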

\begin{proof}
	See Appendix \eqref{Sec: App. proof frac derive of Poly-fractonomials}.

\end{proof}
\vspace{0.2cm}
\begin{rem}
	Lemma \ref{Lem: Frac derive of Poly-fractonomials} shows that the structure of Jacobi poly-fractonomials is preserved under the action of fractional derivatives. Moreover, we note that when $\sigma = \mu$ in Lemma \ref{Lem: Frac derive of Poly-fractonomials}, the fractional derivatives of Jacobi poly-fractonomials are obtained in terms of Legendre polynomials, which has been reported in \cite{Zayernouri2013}.
\end{rem}
%

%
\subsection{Fractional Sobolev Spaces}
\label{Sec: Fractional Sobolev Spaces}
%

By $H^s(\mathbb{R})$, $s \geq 0$, we denote the fractional Sobolev space on $\mathbb{R}$, defined as 
\begin{equation}
	\label{Eq: Frac Sobolev on R}
	H^s(\mathbb{R}) = \{ v \in L^2(\mathbb{R}) \vert \,\, (1+\vert \omega \vert^2 )^{\frac{s}{2}}\mathcal{F}(v)(\omega) \in L^2(\mathbb{R})   \},
\end{equation}
which is endowed with the norm 
\begin{equation}
	\label{Eq: Norm Frac Sobolev on R}
	\Vert \cdot \Vert_{s, \mathbb{R}}  =  \Vert  (1+\vert \omega \vert^2 )^{\frac{s}{2}}\mathcal{F}(\cdot)(\omega)   \Vert_{L^2(\mathbb{R})},
\end{equation}
where $\mathcal{F}(v)$ represents the Fourier transform of $v$. Subsequently, we denote by $H^s(I)$, $s \geq 0$ the fractional Sobolev space on any finite closed interval $I$, defined as 
\begin{equation}
	\label{Eq: Frac Sobolev on Interval}
	H^s(I) = \{ v \in L^2(\mathbb{R}) \vert \,\,\, \exists \tilde{v} \in H^s(\mathbb{R})\,\,\, s.t.\,\,\, \tilde{v}|_{I} = v   \},
\end{equation}
with the norm
\begin{equation}
	\label{Eq: NormFrac Sobolev on Interval}
	\Vert \cdot \Vert_{s, I}  =  \inf_{\tilde{v} \in H^s(\mathbb{R}), \tilde{v}|_{I}=(\cdot)  } \Vert \tilde{v} \Vert_{s, \mathbb{R}}.
\end{equation}
We note that the definition of $H^s(I)$ and the corresponding norm relies on the Fourier transformation of the function. Other useful norms associated with $H^s(I)$, e.g., when $I = [x_L,x_R]$, have been also introduced in \cite{Li2009}, 
\begin{align}
	\label{Eq: LeftEquivallent NormFrac Sobolev on Interval}
	\Vert \cdot \Vert_{l, s, I}  = \Big(  \Vert \cdot \Vert^2_{L^2(I)} +   \Vert  \prescript{RL}{x_L}{\mathcal{D}}_{x}^{\mu}(\cdot)  \Vert^2_{L^2(I)}    \Big)^{\frac{1}{2}},
	\\
	\label{Eq: RightEquivallent NormFrac Sobolev on Interval}
	\Vert \cdot \Vert_{r, s, I}  = \Big(  \Vert \cdot \Vert^2_{L^2(I)} +   \Vert  \prescript{RL}{x}{\mathcal{D}}_{x_R}^{\mu}(\cdot)  \Vert^2_{L^2(I)}    \Big)^{\frac{1}{2}},
\end{align}
such that the left-side $\Vert \cdot \Vert_{l, s, I}$, the right-sided $\Vert \cdot \Vert_{r, s, I} $, and $\Vert \cdot \Vert_{s, I} $ are shown to be equivalent. 

Next, let $\phi \in L^1(\,[\alpha_{min},\alpha_{max}]\,)$, $0 \leq \alpha_{min} < \alpha_{max} $, be nonnegative. By $\prescript{\phi}{}{\mathcal H}(\mathbb{R})$, we denote the \textit{distributed} fractional Sobolev space on $\mathbb{R}$, defined as 
%
\begin{equation}
	\label{Eq: dis. Frac Sobolev on R}
	\prescript{\phi}{}{\mathcal H}(\mathbb{R}) = \{ v \in L^2(\mathbb{R}) \vert \,\, \int_{\alpha_{min}}^{\alpha_{max}} \left[\phi(\alpha)(1+\vert \omega \vert^2 )^\alpha\right]^{\frac{1}{2}} \,\mathcal{F}(v)(\omega) \,\, d\alpha \,\in L^2(\mathbb{R})   \},
\end{equation}
which is endowed with the norm 
\begin{equation}
	\label{Eq: Norm dis Frac Sobolev on R}
	\Vert \cdot \Vert_{\phi, \mathbb{R}}  =  
	\left(
	\int_{\alpha_{min}}^{\alpha_{max}} \phi(\alpha) \,\, \norm{ \, (1+\vert \omega \vert^2 )^{\frac{\alpha}{2}}\mathcal{F}(\cdot)(\omega) \,  }^2_{L^2(\mathbb{R})} \,\, d\alpha
	\right)^{\frac{1}{2}}.
\end{equation}

Subsequently, we denote by $\prescript{\phi}{}{\mathcal H}(I)$ the \textit{distributed} fractional Sobolev space on the finite closed interval $I$, defined as 
\begin{equation}
	\label{Eq: dis. Frac Sobolev on Interval}
	\prescript{\phi}{}{\mathcal H}(I) = \{ v \in L^2(\mathbb{R}) \vert \,\,\, \exists \tilde{v} \in \prescript{\phi}{}{\mathcal H}(\mathbb{R})\,\,\, s.t.\,\,\, \tilde{v}|_{I} = v   \},
\end{equation}
with the norm
\begin{equation}
	\label{Eq: dis. NormFrac Sobolev on Interval}
	\Vert \cdot \Vert_{\phi, I}  =  \inf_{\tilde{v} \in \prescript{\phi}{}{\mathcal H}(\mathbb{R}), \tilde{v}|_{I}=(\cdot)  } \Vert \tilde{v} \Vert_{\phi, \mathbb{R}}.
\end{equation}
Moreover, we introduce the following useful norms, associated with $\prescript{\phi}{}{\mathcal H}(I)$:
\begin{equation}
	\label{Eq: LeftEquivallent dis. NormFrac Sobolev on Interval}
	\Vert \cdot \Vert_{l, \phi, I}  =  
	\left(
	\Vert \cdot \Vert^2_{L^2(I)} + 
	\int_{\alpha_{min}}^{\alpha_{max}} \phi(\alpha) \,\, \norm{ \, \prescript{RL}{x_L}{\mathcal{D}}_{x}^{\alpha}(\cdot)  \, }^2_{L^2(I)} \,\, d\alpha
	\right)^{\frac{1}{2}},
\end{equation}
and
\begin{equation}
	\label{Eq: RightEquivallent dis. NormFrac Sobolev on Interval}
	\Vert \cdot \Vert_{r, \phi, I}  = 
	\left(
	\Vert \cdot \Vert^2_{L^2(I)} + 
	\int_{\alpha_{min}}^{\alpha_{max}} \phi(\alpha) \,\, \norm{ \, \prescript{RL}{x_L}{\mathcal{D}}_{x}^{\alpha}(\cdot)  \, }^2_{L^2(I)} \,\, d\alpha
	\right)^{\frac{1}{2}}.
\end{equation}
\begin{figure}[!]
	\centering
	\includegraphics[width=1.1\linewidth]{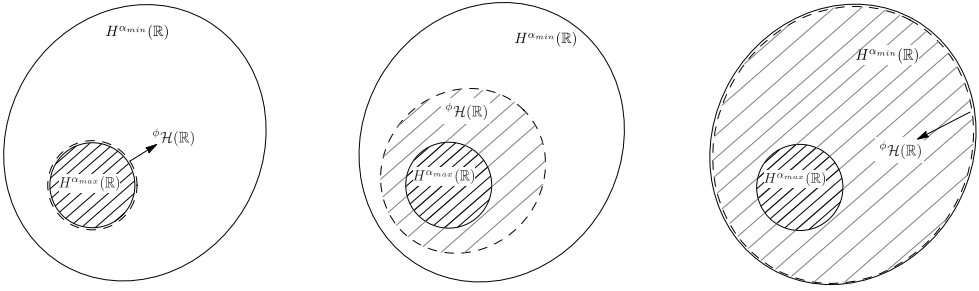}
	\caption{\small{Schematic of distributed fractional Sobolev space $\prescript{\phi}{}{\mathcal H}(\mathbb{R})$: (left) $\phi = \delta(\alpha - \alpha_{max})$ hence $\prescript{\phi}{}{\mathcal H}(\mathbb{R}) = H^{\alpha_{max}}(\mathbb{R})$; (middle) $\phi$ defined on a compact support in $[\alpha_{min} , \alpha_{max}]$, hence, $\prescript{\phi}{}{\mathcal H}(\mathbb{R}) \supset H^{\alpha_{max}}(\mathbb{R})$; (right) $\phi = \delta(\alpha - \alpha_{min})$, where $\prescript{\phi}{}{\mathcal H}(\mathbb{R}) = H^{\alpha_{min}}(\mathbb{R})$.}}
	\label{Fig: Fractional Sobolve Space}
\end{figure}

We note that when $\phi > 0$ is continuous in $I$, $\prescript{\phi}{}{\mathcal H}(\mathbb{R})$ is equivalent to $H^{\alpha_{max}}(\mathbb{R})$. However, in general, the choice of $\phi$ can arbitrarily confine the domain of integration in practice. In other words, $\alpha_{min}$ and $\alpha_{max}$ are only the theoretical lower and upper terminals in the definition of distributed order fractional derivative. For instance, in a distributed sub-diffusion problem, the temporal derivative is associated with $\alpha_{min} = 0$ and $\alpha_{max} = 1$, and in a super-diffusion problem, the theoretical upper terminal $\alpha_{max} =2$. In this study we particularly aim to let $\phi$ be defined in any possible subset of the interval $[\alpha_{min} , \alpha_{max}]$. Hence, in each realization of a physical process (e.g. sub- or super-diffusion) $\phi$ can be obtained from data, where the theoretical setting of the problem remains invariant yet requiring the solution to have less regularity (since $\prescript{\phi}{}{\mathcal H}(\mathbb{R}) \supset  H^{\alpha_{max}}(\mathbb{R})$ in general, see Fig.\ref{Fig: Fractional Sobolve Space}).

In the following theorem, we prove the equivalence (shown by the notation $\sim$) of the aforementioned norms.
\begin{theorem}
	\label{Thm: dis. norm equivalency}
	Let $\phi \in L^1(\, [\alpha_{min} , \alpha_{max}] \,) $ be non-negative. Then, the norms $\Vert \cdot \Vert_{\phi, I}$, $\Vert \cdot \Vert_{l, \phi, I}$, and $\Vert \cdot \Vert_{r, \phi, I}$ are equivalent.
\end{theorem}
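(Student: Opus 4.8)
The plan is to reduce the whole statement to the single-order equivalence $\norm{\cdot}_{\alpha,I}\sim\norm{\cdot}_{l,\alpha,I}\sim\norm{\cdot}_{r,\alpha,I}$ of \cite{Li2009}, applied pointwise in $\alpha$, and then to integrate these equivalences against $\phi(\alpha)\,d\alpha$. Throughout I assume $\norm{\phi}_{L^1}>0$, since otherwise the $\phi$-norm degenerates. The central object is the Fourier/extension norm $\norm{\cdot}_{\alpha,I}$ from \eqref{Eq: NormFrac Sobolev on Interval}, which I use as a \emph{hub}: I compare the left- and right-sided intrinsic norms to it \emph{separately}, rather than to each other, and this is exactly what keeps the comparison constants under control. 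The first step is the reduction $\norm{v}_{\phi,I}^2\sim\int_{\alpha_{min}}^{\alpha_{max}}\phi(\alpha)\,\norm{v}_{\alpha,I}^2\,d\alpha$. For any extension $\tilde v$ with $\tilde v|_I=v$, directly from the definitions $\norm{\tilde v}_{\phi,\mathbb R}^2=\int_{\alpha_{min}}^{\alpha_{max}}\phi(\alpha)\,\norm{\tilde v}_{\alpha,\mathbb R}^2\,d\alpha$ and $\norm{\tilde v}_{\alpha,\mathbb R}\geq\norm{v}_{\alpha,I}$; taking the infimum over $\tilde v$ and using $\inf\int\geq\int\inf$ gives the lower bound $\norm{v}_{\phi,I}^2\geq\int\phi(\alpha)\norm{v}_{\alpha,I}^2\,d\alpha$ at once. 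For the matching upper bound I would fix a single (Stein-type) total extension operator $E$ that simultaneously extends every $H^\alpha(I)$, $\alpha\in[\alpha_{min},\alpha_{max}]$, with operator norm uniformly bounded on this compact range; choosing $\tilde v=Ev$ then yields $\norm{v}_{\phi,I}^2\leq\int\phi(\alpha)\norm{Ev}_{\alpha,\mathbb R}^2\,d\alpha\lesssim\int\phi(\alpha)\norm{v}_{\alpha,I}^2\,d\alpha$.

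Next I would convert the right-hand side into the intrinsic norms. For each fixed $\alpha$, \cite{Li2009} gives $\norm{v}_{\alpha,I}\sim\norm{v}_{l,\alpha,I}$; integrating this against $\phi$ and using $\norm{v}_{l,\alpha,I}^2=\norm{v}_{L^2(I)}^2+\norm{\prescript{RL}{x_L}{\mathcal D}_{x}^{\alpha}v}_{L^2(I)}^2$ produces $\int\phi(\alpha)\norm{v}_{\alpha,I}^2\,d\alpha\sim\norm{\phi}_{L^1}\,\norm{v}_{L^2(I)}^2+\int\phi(\alpha)\,\norm{\prescript{RL}{x_L}{\mathcal D}_{x}^{\alpha}v}_{L^2(I)}^2\,d\alpha$. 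Since $0<\norm{\phi}_{L^1}<\infty$, the prefactor $\norm{\phi}_{L^1}$ on the $L^2(I)$ term may be replaced by $1$ without affecting equivalence, and the right-hand side is then exactly $\norm{v}_{l,\phi,I}^2$. The identical computation with the right-sided derivative $\prescript{RL}{x}{\mathcal D}_{x_R}^{\alpha}$ gives $\int\phi(\alpha)\norm{v}_{\alpha,I}^2\,d\alpha\sim\norm{v}_{r,\phi,I}^2$. Chaining these with the first step and using transitivity yields $\norm{\cdot}_{\phi,I}\sim\norm{\cdot}_{l,\phi,I}\sim\norm{\cdot}_{r,\phi,I}$, which is the assertion.

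The one genuinely delicate point, and the step I expect to be the main obstacle, is the uniformity in $\alpha$ of the two families of constants invoked above: the operator norm of the extension $E$ and the equivalence constants of \cite{Li2009}. The pointwise equivalences only furnish constants depending on $\alpha$, whereas integrating against $\phi$ requires them to be bounded (or at least $\phi$-integrable) over $[\alpha_{min},\alpha_{max}]$; a constant that blows up as $\alpha$ approaches a half-integer $k-\tfrac12$, where the coupling between left- and right-sided fractional derivatives in the single-order theory degenerates, could in principle destroy integrability. I would resolve this by tracking the $\alpha$-dependence through the arguments of \cite{Li2009}: the extension constant is controlled by a universal (Stein) extension, bounded uniformly on compact Sobolev ranges, while the norm equivalences are routed through the Fourier/extension hub $\norm{\cdot}_{\alpha,I}$ rather than by comparing left and right derivatives directly, so that only the symbol relation $\abs{(i\omega)^\alpha}=\abs{\omega}^\alpha$, continuous and uniformly controlled on the compact $\alpha$-range, enters. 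This circumvents the $\cos(\pi\alpha)$ factor responsible for the half-integer degeneracy and keeps all constants bounded on $[\alpha_{min},\alpha_{max}]$, so that the pointwise equivalences survive the integration.
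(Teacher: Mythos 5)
Your proposal rests on the same engine as the paper's own proof: the fixed-order equivalences $\norm{\cdot}_{s,I}\sim\norm{\cdot}_{l,s,I}\sim\norm{\cdot}_{r,s,I}$ of \cite{Li2009}, integrated in $\alpha$ against $\phi$. The differences lie in how that integration is organized, and your version is tighter on two counts. First, you keep $\phi$ inside every integral, so the argument runs under the stated hypothesis ($\phi\in L^1$, nonnegative, $\norm{\phi}_{L^1}>0$). The paper instead begins its proof with ``let $\phi$ be bounded'' and sandwiches $\norm{\cdot}^2_{l,\phi,I}$ between $C_1\phi_{min}A$ and $C_2\phi_{max}A$, where $A$ (and likewise $B$) is the \emph{unweighted} integrated norm; for that sandwich to produce the equivalence it also needs $\phi_{min}>0$, i.e., hypotheses strictly stronger than those of the theorem. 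Second, your treatment of the step connecting $\norm{\cdot}_{\phi,I}$ with $\int_{\alpha_{min}}^{\alpha_{max}}\phi(\alpha)\norm{\cdot}^2_{\alpha,I}\,d\alpha$ is sound: the lower bound holds because the infimum of an integral dominates the integral of the pointwise infima, and the upper bound follows from one fixed Stein-type extension operator whose norm is uniformly bounded over the compact range of $\alpha$. The paper's corresponding passage juggles infima over $\prescript{\phi}{}{\mathcal H}(\mathbb{R})$ versus $H^{\alpha}(\mathbb{R})$ and at one point asserts the identity $\inf\int=\int\inf$, which is false in general (only ``$\geq$'' holds); your extension-operator step is precisely the missing ingredient that repairs that direction of the paper's argument.

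The caveat concerns the point you flagged yourself: uniformity in $\alpha$ of the fixed-order constants. You are not behind the paper here --- its Riemann-sum passage to the limit assumes such uniformity tacitly and never mentions it --- but your proposed remedy overreaches. The degeneracy at $\alpha=k-\tfrac12$ is not produced solely by the $\cos(\pi\alpha)$ identity coupling left- and right-sided derivatives; it already afflicts the comparison between the quotient norm $\norm{\cdot}_{\alpha,I}$ and the Riemann--Liouville norms, since the latter behave like zero-extension ($H^{s}_{00}$-type) norms and $H^{s}_{0}(I)\neq H^{s}_{00}(I)$ exactly at $s=k-\tfrac12$; this is why the equivalences in \cite{Li2009} are stated for $s\neq n-\tfrac12$. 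So when $[\alpha_{min},\alpha_{max}]$ contains a half-integer, your proof, like the paper's, still needs the locally blowing-up equivalence constants to be $\phi$-integrable, and the Fourier-hub routing does not by itself deliver that. Treat this as a limitation shared with (and inherited from) the paper, not as a point your argument has closed.
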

\begin{proof}
	See Appendix \eqref{Sec: App. proof dis. norm equivalency}.
\end{proof}
\vspace{0.2 cm}
%
\vspace{0.2 cm}
\begin{lem}
	\label{Lem: left frac proj}
	\cite{Li2009}: For all $0< \alpha \leq 1$, if $u \in H^1([a,b])$ such that $u(a)=0$, and $w \in H^{\alpha/2}([a,b])$, then
	\begin{equation}
		\label{Eq: left frac proj}
		( \prescript{}{a}{ \mathcal{D}}_{s}^{\,\,\alpha} u, w )_{\Omega} =  (\, \prescript{}{a}{ \mathcal{D}}_{s}^{\,\,\alpha/2} u \,,\, \prescript{}{s}{ \mathcal{D}}_{b}^{\,\,\alpha/2} w\, )_{\Omega},
	\end{equation}
	where $(\cdot , \cdot)_{\Omega}$ represents the standard inner product in $\Omega=[a,b]$. 
	%
\end{lem}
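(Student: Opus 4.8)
The plan is to reduce the identity \eqref{Eq: left frac proj} to two standard ingredients: the semigroup (composition) law for Riemann--Liouville derivatives recorded in \eqref{Eq: Property of RL deriv}, and the adjointness of the left- and right-sided operators under the $L^2(\Omega)$ pairing. Throughout I write $\Omega = [a,b]$ and set $\beta := \alpha/2 \in (0,\tfrac12]$. Before manipulating the inner products I would first confirm that every operator that appears lands in $L^2(\Omega)$, so that both sides are finite. Since $u \in H^1(\Omega)$ with $u(a)=0$, relation \eqref{Eq:  Caputo vs. Riemann} shows that $\prescript{}{a}{\mathcal{D}}_s^{\beta} u$ coincides with the Caputo derivative $\prescript{}{a}{\mathcal{I}}_s^{1-\beta} u'$; because $u' \in L^2(\Omega)$ and fractional integration is bounded on $L^2(\Omega)$, it follows that $\prescript{}{a}{\mathcal{D}}_s^{\beta} u \in L^2(\Omega)$ (indeed in $H^{1-\beta}(\Omega)$). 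Likewise $w \in H^{\beta}(\Omega)$ gives $\prescript{}{s}{\mathcal{D}}_b^{\beta} w \in L^2(\Omega)$ by the norm equivalences of \cite{Li2009}.

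Next I would invoke the composition law \eqref{Eq: Property of RL deriv} with $p = q = \beta$, observing that its hypothesis $g(x_L)=0$ is exactly the assumption $u(a)=0$. This splits the $\alpha$-order operator into two half-order ones,
\[
\prescript{}{a}{\mathcal{D}}_s^{\alpha} u = \prescript{}{a}{\mathcal{D}}_s^{\beta}\big(\prescript{}{a}{\mathcal{D}}_s^{\beta} u\big),
\]
so that $(\prescript{}{a}{\mathcal{D}}_s^{\alpha} u,\, w)_{\Omega} = \big(\prescript{}{a}{\mathcal{D}}_s^{\beta}(\prescript{}{a}{\mathcal{D}}_s^{\beta} u),\, w\big)_{\Omega}$. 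I would then apply the fractional integration-by-parts identity
\[
\big(\prescript{}{a}{\mathcal{D}}_s^{\beta} f,\, g\big)_{\Omega} = \big(f,\, \prescript{}{s}{\mathcal{D}}_b^{\beta} g\big)_{\Omega}, \qquad 0<\beta<1,
\]
with $f = \prescript{}{a}{\mathcal{D}}_s^{\beta} u$ and $g = w$. This transfers the outer left derivative onto $w$ as a right derivative and yields precisely $\big(\prescript{}{a}{\mathcal{D}}_s^{\beta} u,\, \prescript{}{s}{\mathcal{D}}_b^{\beta} w\big)_{\Omega}$, the right-hand side of \eqref{Eq: left frac proj}.

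The main obstacle is justifying this integration-by-parts step rigorously. It rests on the Fubini-type adjointness of the left and right fractional integrals, $(\prescript{}{a}{\mathcal{I}}_s^{\gamma} f,\, g)_{\Omega} = (f,\, \prescript{}{s}{\mathcal{I}}_b^{\gamma} g)_{\Omega}$, combined with an ordinary integration by parts to absorb the integer-order derivative hidden in the Riemann--Liouville definition \eqref{Eq: left RL derivative}; this last step generates endpoint contributions that must be shown to vanish. The term at $s=a$ is killed by the decay of $\prescript{}{a}{\mathcal{I}}_s^{1-\beta}(\prescript{}{a}{\mathcal{D}}_s^{\beta} u)$ near $a$, a consequence of $u(a)=0$, while the term at $s=b$ vanishes through the structure of the right-sided fractional integral of $w$. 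Because half-order derivatives of merely $H^{\beta}$ functions lie only in $L^2(\Omega)$, I would carry out the computation first for smooth functions vanishing appropriately at the endpoints and then pass to the limit, using the continuity of the fractional operators between the relevant Sobolev spaces (the equivalences of \cite{Li2009}) to extend the identity by density. This closure argument is the delicate part; the algebraic manipulation via \eqref{Eq: Property of RL deriv} is otherwise immediate.
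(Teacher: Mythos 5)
You should know at the outset that the paper contains no proof of this lemma: it is imported as a known result from \cite{Li2009}, so there is no in-paper argument to compare against line by line. The closest thing the paper offers is its own proof of the companion result, Lemma \ref{Lem: fractional integ-by-parts 1 and 2}, in Appendix \ref{Sec: App. proof fractional integ-by-parts 1 and 2}, and your plan has exactly that proof's skeleton: use $u(a)=0$ and the composition law \eqref{Eq: Property of RL deriv} to split the full-order derivative into two half-order ones, transfer one half-order derivative onto the test function via ordinary integration by parts plus the Fubini-type adjointness of fractional integrals, and kill the left-endpoint contribution by the decay of $\prescript{}{a}{\mathcal{I}}_{x}^{1-\beta}$ applied to an $L^2$ function (the paper's estimate \eqref{Eq: fracinteg to zero}). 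In that sense your route is the standard one and is correct in outline.

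Two caveats, both at the right endpoint, which is where your write-up is weakest. First, in Lemma \ref{Lem: fractional integ-by-parts 1 and 2} the paper assumes $v(b)=0$, so the boundary term at $x=b$ simply drops; in the present lemma no condition is imposed on $w(b)$, and that term does \emph{not} vanish --- it cancels. Concretely, with $f = \prescript{}{a}{\mathcal{D}}_{s}^{\beta}u$, integrating the derivative in $\prescript{}{a}{\mathcal{D}}_{s}^{\beta}f = \frac{d}{ds}\prescript{}{a}{\mathcal{I}}_{s}^{1-\beta}f$ by parts produces the boundary term $w(b)\,\prescript{}{a}{\mathcal{I}}_{b}^{1-\beta}f$, while the Fubini step produces the right \emph{Caputo} derivative of $w$; the discrepancy $\prescript{RL}{s}{\mathcal{D}}_{b}^{\beta}w - \prescript{C}{s}{\mathcal{D}}_{b}^{\beta}w = w(b)\,(b-s)^{-\beta}/\Gamma(1-\beta)$, paired against $f$, equals exactly $w(b)\,\prescript{}{a}{\mathcal{I}}_{b}^{1-\beta}f$, and only after this exact cancellation do you land on $( f , \prescript{RL}{s}{\mathcal{D}}_{b}^{\beta}w )_{\Omega}$. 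Your phrase ``vanishes through the structure of the right-sided fractional integral of $w$'' gestures at this but is not a proof; exhibiting the cancellation is the one step you cannot skip, since it is precisely what lets the lemma dispense with any boundary condition on $w$. Second, your well-posedness claim for the right-hand side fails at the endpoint $\alpha = 1$: for $w \in H^{1/2}([a,b])$ the norm equivalences of \cite{Li2009} do \emph{not} give $\prescript{}{s}{\mathcal{D}}_{b}^{1/2}w \in L^2$ (those equivalences exclude half-integer orders; $w \equiv 1$ is a counterexample, since $\prescript{}{s}{\mathcal{D}}_{b}^{1/2}1 = (b-s)^{-1/2}/\Gamma(1/2) \notin L^2$), so for $\alpha = 1$ the right-hand side must be read as a convergent H\"{o}lder/duality pairing rather than an $L^2$ inner product. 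For $0<\alpha<1$ your density-and-continuity closure goes through as described.
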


\begin{lem}
	\label{Lem: fractional integ-by-parts 1 and 2}
	Let $1/2 < \mu < 1$, $a$ and $b$ be arbitrary finite or infinite real numbers. Assume $u \in H^{2\mu}(a,b)$ such that $u(a)=0$, also $\prescript{}{x}{\mathcal{D}}_{b}^{\mu}v$ is integrable in $(a,b)$ such that $v(b) = 0$. Then
	\begin{equation}
		\label{Eq: FracIntegByParts}
		( \prescript{}{a}{\mathcal{D}}_{x}^{2\mu} u \,,\,v ) = ( \prescript{}{a}{\mathcal{D}}_{x}^{\mu} u \,,\,\prescript{}{x}{\mathcal{D}}_{b}^{\mu} v ). 
	\end{equation}
\end{lem}
\begin{proof}
	See Appendix \eqref{Sec: App. proof fractional integ-by-parts 1 and 2}.
\end{proof}
\begin{rem}
	Unlike other existing proofs (e.g., see Proposition 1 in \cite{zhang2010galerkin}), our proof requires $v(x)$ to only vanish at the right boundary (note that $v(a)$ can be non-zero), moreover, we only require the $\mu$-th derivative (rather than the first derivative) of $v(x)$ to be integrable in $(a,b)$.
\end{rem}

%

\subsection{Problem Definition}
\label{Sec: problem definition}

Following \cite{Atanackovic2009distributional}, let $\alpha \mapsto \phi(\alpha)$ be a continuous mapping in $[\alpha_{min} , \alpha_{max}]$. Then, we define the distributed order fractional derivative as
\begin{equation}
	\label{Eq: Distributed derivative-I}
	\prescript{D}{}{\mathcal{D}}_{\phi}^{} u(t)= 
	\int_{\alpha_{min}}^{\alpha_{max}} \phi(\alpha) \prescript{*}{a}{\mathcal{D}}_{t}^{\alpha} u(t) d\alpha, \quad t > a,
\end{equation}
where $\prescript{*}{a}{\mathcal{D}}_{t}^{\alpha}$ denotes Riemann-Liouville fractional derivative of order $\alpha$. Next, we aim to solve the following differential equation of distributed order: 
\begin{eqnarray}
	\label{Eq: distributional order problem}
	\prescript{D}{}{\mathcal{D}}_{\phi}^{} u(t) &=& f(t; u), \quad \forall t \in (0,T],
	\\
	\label{Eq: distributional ICs-1}
	u(0) &=& 0, \quad (\alpha_{max}) \in (0,1],
	\\
	\label{Eq: distributional ICs-2}
	u(0) =\frac{du}{dt}|_{t=0} &=& 0, \quad  (\alpha_{max})  \in (1,2].
\end{eqnarray}
In the sequel, we present different approaches to discretize the aforementioned differential operator. Due to \eqref{Eq:  Caputo vs. Riemann}, the Caputo and Riemann-Liouville fractional derivatives of order $\alpha \in (0,1)$ coincide with each other when $u(a) = 0$. Therefore, in this study, we employ the definition of the distributed fractional derivatives of Riemann-Liouville sense and remove the pre-superscript $RL$ for simplicity.

%
\section{Variational Formulation}
\label{Sec: Variational}
%
In order to obtain the variational form we multiply \eqref{Eq: distributional order problem} by a proper test function $v$ (defined later) and integrate over the computational domain:

\begin{equation}
	\label{Eq: variational form-1}
	\int_{\Omega}^{} \prescript{D}{}{\mathcal{D}}_{\phi}^{} u(t) \, v(t) \, d\Omega = \int_{\Omega}^{} f(t; u) \, v(t) \, d\Omega .
\end{equation}

\noindent Using the definition of distributed order fractional derivatives defined in \eqref{Eq: Distributed derivative-I} we get

\begin{equation}
	\label{Eq: variational form-2}
	\int_{\alpha_{min}}^{\alpha_{max}} \phi(\alpha) \int_{\Omega}^{} \prescript{}{0}{\mathcal{D}}_{t}^{\alpha} u(t) \, v(t) \, d\Omega \, d\alpha = \int_{\Omega}^{} f(t; u) \, v(t) \, d\Omega ,
\end{equation}

\noindent where

\begin{equation*}
	\forall \alpha \in (\alpha_{min},\alpha_{max}),  \quad \int_{\Omega}^{} \prescript{}{0}{\mathcal{D}}_{t}^{\alpha} u \, v \, d\Omega = \left(\prescript{}{0}{\mathcal{D}}_{t}^{\alpha} u , v\right)_{\Omega}
\end{equation*}

\noindent denotes the well-known $L^2$-inner product. Given the initial conditions \eqref{Eq: distributional ICs-1} and/or \eqref{Eq: distributional ICs-2} and by  Lemmas \eqref{Lem: left frac proj}, we define the bilinear form associated with $\alpha \in (\alpha_{min},\alpha_{max})$ as

\begin{eqnarray}
	\label{Eq: bilinear form}
	a^{(\alpha)} (u,v)  =  \left( \prescript{}{0}{\mathcal{D}}_{t}^{\alpha} u , v \right)_{\Omega} 
	=  ( \prescript{}{0}{\mathcal{D}}_{t}^{\alpha/2} u , \prescript{}{t}{\mathcal{D}}_{T}^{\alpha/2} v )_{\Omega}.
\end{eqnarray}
We choose $v$ such that $v(T) = 0$ and $\prescript{}{t}{\mathcal{D}}_{T}^{\alpha/2} v$ is integrable in $\Omega \,\,$  $\forall \alpha \in (\alpha_{min},\alpha_{max})$. Moreover, let $U$ be the solution space, defined as

\begin{eqnarray}
	\label{Eq: U definition}
	U = \lbrace u \in L^2(\Omega) : & & \sqrt{ \int_{\alpha_{min}}^{\alpha_{max}}  \phi(\alpha) \norm{\prescript{}{0}{\mathcal{D}}_{t}^{\alpha/2} u}^2_{L^2(\Omega)} d\alpha } <  \infty 
	\\
	& s.t. \quad
	&
	\begin{array} {ll}
		u(0) = 0 & \text{if} \,\, \alpha_{max} \in (0,1]
		\\ 
		u(0) = \frac{du}{dt}|_{t=0} = 0 & \text{if} \,\, \alpha_{max} \in (1,2]
	\end{array}
	\nonumber
	\rbrace
\end{eqnarray}

\noindent and let $V$ be the test function space given by

\begin{equation}
	\label{Eq: V definition}
	V = \lbrace v \in L^2(\Omega) : \sqrt{ \int_{\alpha_{min}}^{\alpha_{max}}  \phi(\alpha) \norm{\prescript{}{t}{\mathcal{D}}_{T}^{\alpha/2} v}^2_{L^2(\Omega)} d\alpha } <  \infty \quad s.t. \quad  v(T) = 0 \rbrace .
\end{equation}
The problem thus reads as: find $u \in U$ such that $a(u,v) = l(v), \quad \forall v \in V$ where 

\begin{equation}
	\label{Eq: distributed bilinear form}
	a(u,v) := \int_{\alpha_{min}}^{\alpha_{max}} \phi(\alpha) \,\, a^{(\alpha)}(u,v) \, d\alpha
\end{equation}
denotes the \textit{distributed} bilinear form and $l(v) := (f,v)_{\Omega}$.

%
\subsection{Petrov-Galerkin Method: Modal Expansion}
\label{Sec: Modal: PG Method}
%

In the Petrov-Galerkin (PG) method, we follow the recent theory of fractional Sturm-Liouville eigen-problems (FSLP) in \cite{Zayernouri2013} and employ the corresponding eigenfunctions, known as the \textit{Jacobi Poly-fractonomial}s (of first kind) given in the standard domain $[-1,1]$ by
\begin{equation}
	\label{Eq: RegularBasis I}
	\prescript{(1)}{}{ \mathcal{P}}_{n}^{\,\,\mu}(\xi) = (1+\xi)^{\mu} P_{n-1}^{-\mu,\mu} (\xi),\quad \xi \in [-1,1],
\end{equation}

\noindent as \textit{non-polynomial} basis functions consisting of a fractional term multiplied by the Jacobi polynomial $P_{n-1}^{-\mu,\mu} (\xi)$, hence we employ \eqref{Eq: RegularBasis I} in construction of a basis to formulate a projection type scheme, namely \textit{modal} expansion. We represent the solution in terms of the elements of the basis space $U_N$ given as follows

\begin{align}
	\label{Eq: UN}
	U_N = 
	span \big\{ \prescript{(1)}{}{ \mathcal{P}}_{n}^{\mu}(\xi), \quad \xi \in [-1,1], \quad n= 1,2,\cdots , N \big\},
\end{align}

\noindent via the poly-fractonomial \textit{modal} expansion as

\begin{equation}
	\label{Eq: Approx SolN Modal}
	u_N(\xi) = \sum_{n=1}^{N} c_n  \prescript{(1)}{}{ \mathcal{P}}_{n}^{\mu}(\xi),
\end{equation}
in which $\mu$ is to be fixed as a fractional parameter \textit{a priori} depending on the range of distribution order interval, i.e.\, $\mu \in (0,1)$ if $\alpha_{max} \in (0,1]$ and $\mu \in (1,2)$ if $\alpha_{max} \in (1,2]$. It can also be tunned to capture possible singularities in the exact solution if some knowledge about that is available. 

Moreover, in the PG scheme, we employ another space of test functions $V_N$, however of the same dimension, given by
\begin{align}
	\label{Eq: VN}
	V_N = 
	span \big\{ \prescript{(2)}{}{ \mathcal{P}}_{k}^{\mu}(\xi), \quad \xi \in [-1,1], \quad k=1,2,\cdots , N \big\} , 
\end{align}
in which $ \prescript{(2)}{}{ \mathcal{P}}_{k}^{\mu}(\xi) = (1-\xi)^{\mu} P^{\mu, -\mu}_{k-1}(\xi)$ denotes the Jacobi poly-fractonomial of second kind, which is the explicit eigenfunction of fractional Sturm-Liouville problem of second kind in \cite{Zayernouri2013}. 

It should be noted that since $\phi(\alpha) \geq 0$ and $\int_{\alpha_{min}}^{\alpha_{max}} \phi(\alpha) \, d\alpha = 1$ it is not difficult to see that $U_N \subset U$ and $V_N \subset V$ when $\mu$ is chosen properly. Therefore, the bilinear form \eqref{Eq: bilinear form} reduces to the \textit{discrete} bilinear form

\begin{equation}
	\label{Eq: projected bilinear form}
	a_h^{\alpha} (u_N,v_N) = ( \prescript{}{0}{\mathcal{D}}_{t}^{\alpha/2} u_N , \prescript{}{t}{\mathcal{D}}_{T}^{\alpha/2} v_N )_{\Omega}
\end{equation}
and thus the problem reads as: find $u_N \in U_N$ such that

\begin{equation}
	\label{Eq: discrete distributed bilinear form}
	a_h(u_N,v_N) = l_h(v_N), \quad \forall v_N \in V_N,
\end{equation}
where $a_h(u_N,v_N) := \int_{\alpha_{min}}^{\alpha_{max}} \phi(\alpha) \,\, a^{\alpha}_h(u_N,v_N) \, d\alpha \,\,$denotes the \textit{discrete} distributed bilinear form and $l_h(v_N) := (f,v_N)_{\Omega}$ represents the load vector. 

By substituting the expansion \eqref{Eq: Approx SolN Modal},  choosing $v_N = \prescript{(2)}{}{ \mathcal{P}}_{k}^{\mu}(\xi) \in V_N$, $k=1,2,\cdots,N$ and using \eqref{Eq: RL in xL-xR}, the discrete distributed bilinear form in \eqref{Eq: discrete distributed bilinear form} can be written as

\begin{align}
	\label{Eq: discrete distributed bilinear form UN rep}
	&\int_{\alpha_{min}}^{\alpha_{max}} \phi(\alpha) \,\,
	\sum_{n=1}^{N} c_n (\frac{2}{T})^{\alpha}
	\Big( \prescript{}{-1}{\mathcal{D}}_{\xi}^{\alpha/2} [\prescript{(1)}{}{ \mathcal{P}}_{n}^{\mu}(\xi)]
	\, , \, 
	\prescript{}{\xi}{\mathcal{D}}_{1}^{\alpha/2} [
	\prescript{(2)}{}{ \mathcal{P}}_{k}^{\mu}(\xi)]\, \Big)_{\Omega}  \, 
	d\alpha.
\end{align}
From Lemma \ref{Lem: Frac derive of Poly-fractonomials}, we have
\begin{eqnarray}
	\label{Eq: from lemma frac derive 1}
	\prescript{}{-1}{\mathcal{D}}_{\xi}^{\alpha/2} [ \prescript{(1)}{}{ \mathcal{P}}_{n}^{\mu}(\xi) ]
	&=&
	\frac{\Gamma(n +\mu)}{\Gamma(n + \eta)}  \prescript{(1)}{}{ \mathcal{P}}_{n}^{\eta}(\xi)
	\\ 
	\label{Eq: from lemma frac derive 2}
	\prescript{RL}{\xi}{\mathcal{D}}_{1}^{\alpha/2}[
	\prescript{(2)}{}{ \mathcal{P}}_{k}^{\mu}(\xi) ],
	&=&
	\frac{\Gamma(k +\mu)}{\Gamma(k + \eta)}  \prescript{(2)}{}{ \mathcal{P}}_{k}^{\eta}(\xi),
\end{eqnarray}
where $\eta = \mu - \alpha/2$. Thus, by changing the order of summation, the integral \eqref{Eq: discrete distributed bilinear form UN rep} takes the form
\begin{align}
	\label{Eq: discrete distributed bilinear form UN rep 2}
	&
	\sum_{n=1}^{N} c_n \,\,
	\int_{\alpha_{min}}^{\alpha_{max}} 
	\phi(\alpha) \,\, (\frac{2}{T})^{\alpha} \,
	\frac{\Gamma(n +\mu)}{\Gamma(n + \eta)}  \frac{\Gamma(k +\mu)}{\Gamma(k + \eta)}  
	\Big( 
	\prescript{(1)}{}{ \mathcal{P}}_{n}^{\eta}(\xi)
	\, , \, 
	\prescript{(2)}{}{ \mathcal{P}}_{k}^{\eta}(\xi)
	\Big)_{\Omega}  \, 
	d\alpha
	\\
	\nonumber
	=&
	\sum_{n=1}^{N} c_n \,\,
	\int_{\alpha_{min}}^{\alpha_{max}} 
	\phi(\alpha) \, (\frac{2}{T})^{\alpha} \,
	\frac{\Gamma(n +\mu)}{\Gamma(n + \eta)}  \frac{\Gamma(k +\mu)}{\Gamma(k + \eta)}  
	\int_{-1}^{1}
	\prescript{(1)}{}{ \mathcal{P}}_{n}^{\eta}(\xi) \,\,
	\prescript{(2)}{}{ \mathcal{P}}_{k}^{\eta}(\xi) \,\,
	d\xi
	\, d\alpha,
\end{align}
where by changing the order of integrations we get
\begin{align}
	\label{Eq: discrete distributed bilinear form UN rep 3}
	& a_h(u_N,v_N)
	\\
	\nonumber
	=&
	\sum_{n=1}^{N} c_n \,\,
	\int_{-1}^{1}
	\left[
	\int_{\alpha_{min}}^{\alpha_{max}} 
	\phi(\alpha) \, (\frac{2}{T})^{\alpha} \,
	\frac{\Gamma(n +\mu)}{\Gamma(n + \eta)}  \frac{\Gamma(k +\mu)}{\Gamma(k + \eta)}  
	\prescript{(1)}{}{ \mathcal{P}}_{n}^{\eta}(\xi) \,\,
	\prescript{(2)}{}{ \mathcal{P}}_{k}^{\eta}(\xi) \,\,
	\, d\alpha
	\right] \,
	d\xi.
\end{align}
\vspace{0.1 in}
\begin{theorem}[Spectrally/Exponentially Accurate Quadrature Rule in $\alpha$-Dimension]
	\label{Thm: quadrature convergence}
	
	\noindent Part A: 
	$\forall \xi = \xi_0 \in [-1,1]$ fixed, and $\forall n \in \varmathbb{N} \cup \lbrace 0 \rbrace$, the Jacobi polynomial $P^{\pm \alpha , \mp \alpha}_{n}(\xi_0)$ is a polynomial of order $n$ in $\alpha$.
	
	\vspace{0.1 in}
	\noindent Part B:
	Let $\phi \in H^r([\alpha_{min},\alpha_{max}])$, $r>0$. Then $\forall \mu \geq \alpha_{max}/2$
	\begin{align*}
		\bigg|
		&\int_{\alpha_{min}}^{\alpha_{max}}  
		\phi(\alpha) \, (\frac{2}{T})^{\alpha} \,
		\frac{\Gamma(n +\mu)}{\Gamma(n + \eta)}  \frac{\Gamma(k +\mu)}{\Gamma(k + \eta)}  
		\prescript{(1)}{}{ \mathcal{P}}_{n}^{\eta}(\xi) \,\,
		\prescript{(2)}{}{ \mathcal{P}}_{k}^{\eta}(\xi) \,\,
		d\alpha \,\,  -
		\\
		&
		\quad
		\sum_{q=1}^{Q} 
		\, \widetilde{w}_q \, 
		\phi(\alpha_q) \, (\frac{2}{T})^{\alpha_q} \,
		\frac{\Gamma(n +\mu)}{\Gamma(n + \eta_q)}  \frac{\Gamma(k +\mu)}{\Gamma(k + \eta_q)}  
		\prescript{(1)}{}{ \mathcal{P}}_{n}^{\eta_q}(\xi) \,\,
		\prescript{(2)}{}{ \mathcal{P}}_{k}^{\eta_q}(\xi) \,\,
		\bigg|
		\leq
		\mathscr{C} \, Q^{-r} \norm{\phi}_{H^r([\alpha_{min} , \alpha_{max}])},
	\end{align*}
	where $\mathscr{C} > 0$, $\phi_N (\alpha) = \sum_{n=0}^{N} \,\, \widetilde{\rho}_n \, P_n(\alpha)$ denotes the polynomial expansion of $\phi(\alpha)$, and $\lbrace \alpha_q , \widetilde{w}_q \rbrace \bigg|^{Q}_{q=1}$ represents the set of Gauss-Legendre quadrature points and weights.
	
	\vspace{0.5 cm}
	\noindent Part C: If $\phi(\alpha)$ is smooth, the quadrature rule in $\alpha$-dimension becomes exponentially accurate in $Q$.

\end{theorem}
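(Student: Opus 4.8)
The plan is to dispatch the three parts in order, with Parts~A and~B supplying the structural facts that make the quadrature estimate fall out of the classical Gauss--Legendre error theory. For Part~A I would start from the closed binomial representation of the Jacobi polynomial,
\begin{equation*}
P_n^{(a,b)}(\xi_0) = \sum_{s=0}^{n} \binom{n+a}{s}\binom{n+b}{n-s} \Big(\tfrac{\xi_0-1}{2}\Big)^{n-s}\Big(\tfrac{\xi_0+1}{2}\Big)^{s},
\end{equation*}
and specialize to $(a,b)=(\alpha,-\alpha)$ and $(a,b)=(-\alpha,\alpha)$. The key observation is that $\binom{n+\alpha}{s}=\frac{1}{s!}\prod_{j=0}^{s-1}(n+\alpha-j)$ and $\binom{n-\alpha}{n-s}=\frac{1}{(n-s)!}\prod_{j=0}^{n-s-1}(n-\alpha-j)$ are polynomials in $\alpha$ of degrees $s$ and $n-s$; since $\xi_0$ is frozen the powers of $\tfrac{\xi_0\pm1}{2}$ are fixed scalars, so each summand is a polynomial in $\alpha$ of degree $n$, and the sum is a polynomial of degree $n$ in $\alpha$. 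This proves Part~A.

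For Part~B I would isolate the only rough factor. Writing $\eta=\mu-\alpha/2$, the integrand is $\phi(\alpha)\,\psi(\alpha)$ with
\begin{equation*}
\psi(\alpha) = \Big(\tfrac{2}{T}\Big)^{\alpha}\frac{\Gamma(n+\mu)}{\Gamma(n+\eta)}\frac{\Gamma(k+\mu)}{\Gamma(k+\eta)}\,\prescript{(1)}{}{\mathcal{P}}_n^{\eta}(\xi_0)\,\prescript{(2)}{}{\mathcal{P}}_k^{\eta}(\xi_0).
\end{equation*}
I would show that $\psi$ is real-analytic and, together with all its $\alpha$-derivatives, uniformly bounded on $[\alpha_{min},\alpha_{max}]$. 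Indeed, $(2/T)^{\alpha}$ and the prefactors $(1\pm\xi_0)^{\eta}=(1\pm\xi_0)^{\mu}(1\pm\xi_0)^{-\alpha/2}$ inside the poly-fractonomials are entire exponentials in $\alpha$; the two Jacobi-polynomial factors $P_{n-1}^{-\eta,\eta}(\xi_0)$ and $P_{k-1}^{\eta,-\eta}(\xi_0)$ are, by Part~A together with the affine dependence $\eta=\mu-\alpha/2$, genuine polynomials in $\alpha$; and the Gamma ratios are analytic and pole-free precisely because the hypothesis $\mu\ge\alpha_{max}/2$ forces $n+\eta,\,k+\eta\ge 1>0$ on the whole interval. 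Hence $\psi\in C^{\infty}([\alpha_{min},\alpha_{max}])$ with bounds depending on $n,k,\mu,T,\xi_0$ but not on $Q$.

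With this the estimate is a corollary of the standard Gauss--Legendre error theory. Because $\psi$ is smooth with bounded derivatives, a Sobolev product (fractional Leibniz) inequality gives $\norm{\phi\psi}_{H^r([\alpha_{min},\alpha_{max}])}\le \mathscr{C}\,\norm{\phi}_{H^r([\alpha_{min},\alpha_{max}])}$. I would then use that the $Q$-point Gauss--Legendre rule is exact on polynomials of degree $\le 2Q-1$: subtracting the best degree-$(2Q-1)$ polynomial approximant of $\phi\psi$ (equivalently, truncating the Legendre expansion $\phi_N$ of $\phi$ and projecting the product) and invoking a Jackson/truncation estimate for $H^r$ data bounds the quadrature error of $\phi\psi$ by $C\,Q^{-r}\norm{\phi\psi}_{H^r}$. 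Combined with the product inequality this yields the claimed $\mathscr{C}\,Q^{-r}\norm{\phi}_{H^r}$, completing Part~B.

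Finally, Part~C is the analytic specialization: if $\phi$ is analytic then $\phi\psi$ extends analytically to a Bernstein ellipse about $[\alpha_{min},\alpha_{max}]$ (since $\psi$ was already shown analytic there), and the classical geometric convergence of Gauss--Legendre quadrature for ellipse-analytic integrands gives an error $O(\rho^{-2Q})$, i.e.\ exponential accuracy in $Q$. I expect the only genuine obstacles to be the two verifications feeding Part~B: establishing the uniform analyticity and derivative bounds of $\psi$ (the role of $\mu\ge\alpha_{max}/2$ is exactly to keep the Gamma ratios off the poles of $\Gamma$), and the fractional-order product estimate $\norm{\phi\psi}_{H^r}\le\mathscr{C}\norm{\phi}_{H^r}$ for non-integer $r$, which needs a fractional Leibniz bound rather than the naive integer product rule. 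The quadrature error estimates themselves, for both $H^r$ and analytic data, are standard and can be cited.
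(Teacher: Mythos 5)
Your proof is correct in substance and shares the paper's overall skeleton (isolate the smooth factor from $\phi$, approximate by polynomials in $\alpha$, exploit Gauss--Legendre exactness), but both halves are executed by genuinely different means. In Part~A the paper proceeds by induction on the three-term recursion for $P_n^{\alpha,-\alpha}$, whereas you read the polynomial dependence on $\alpha$ directly off the closed binomial representation; your route is more direct, and a one-line check that the leading coefficient equals $1/n!$ would even give exact degree $n$, which the paper extracts from the recursion. In Part~B the paper never invokes general quadrature theory: it expands both $\phi$ (algebraic rate $N^{-r}$) and the smooth Gamma-ratio weight $\mathcal{W}_{kn}^{\,\xi,\mu}$ (exponential rate) in Legendre series in $\alpha$, replaces the integrand by the product of the two truncations, integrates that polynomial exactly with $Q=2N$ Gauss points, and chains Cauchy--Schwarz, H\"{o}lder and triangle inequalities; you instead black-box two standard facts --- multiplication by a fixed smooth $\psi$ is bounded on $H^r$, and the Gauss--Legendre error for $H^r$ integrands is $O(Q^{-r})$. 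Your version is shorter and cites what is genuinely standard; the paper's is self-contained and, importantly, never evaluates $\phi$ pointwise (its quadrature sum uses $\phi_N(\alpha_q)$ rather than $\phi(\alpha_q)$), so it survives $0<r\le 1/2$, where your route --- and, strictly speaking, the theorem statement itself, which writes $\phi(\alpha_q)$ --- needs $r>1/2$ for point values and the interpolation/quadrature estimates to make sense. Two smaller corrections: the role of $\mu\ge\alpha_{max}/2$ is not to keep the Gamma ratios off poles, since $1/\Gamma$ is entire and hence $\Gamma(n+\mu)/\Gamma(n+\eta)$ is analytic in $\alpha$ regardless; its real purpose is to keep $\eta=\mu-\alpha/2\ge 0$ so that the factors $(1\pm\xi)^{\eta}$ inside the poly-fractonomials remain bounded uniformly in $\xi$. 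And in Part~C your Bernstein-ellipse argument honestly requires analyticity of $\phi$ (as you note), while the paper asserts exponential accuracy from mere smoothness, which by itself only yields super-algebraic convergence; on this point your formulation is the more rigorous one.
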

\begin{proof}
	See Appendix \eqref{Sec: App. proof quadrature convergence}.
	
\end{proof}
%

By theorem \eqref{Thm: quadrature convergence} and performing an affine mapping from $[\alpha \in \alpha_{min},\alpha_{max}]$ to the standard domain $\alpha_{st} \in [-1,1]$, the inner integral in \eqref{Eq: discrete distributed bilinear form UN rep 3} can be evaluated with spectral accuracy by employing a Gauss-Legendre quadrature rule. Then by changing the order of summation and integral $a_h(u_N,v_N) = l_h(v_N)$ can be written as:
\begin{align}
	\label{Eq: St. discrete distributed bilinear form}
	\sum_{n=1}^{N} \,  
	\sum_{j=1}^{Q} \,
	c_n \, J_\alpha \,  w_j \, \phi_j \, (\frac{2}{T})^{\alpha_j} \,
	\frac{\Gamma(n +\mu)}{\Gamma(n + \eta_j)} \frac{\Gamma(k +\mu)}{\Gamma(k + \eta_j)}
	\int_{-1}^{1}
	\prescript{(1)}{}{ \mathcal{P}}_{n}^{\eta_j}(\xi) \,\,
	\prescript{(2)}{}{ \mathcal{P}}_{k}^{\eta_j}(\xi) \,\,
	d\xi
	&
	\\
	\nonumber
	= \Big( f\, , \,  \prescript{(2)}{}{ \mathcal{P}}_{k}^{\mu}(\xi) \Big)_{\Omega}, \quad\quad k=1,2,...,N  ,
	&
\end{align}
where $J_\alpha = \frac{d\alpha}{d\alpha_{st}} = \frac{(\alpha_{max} - \alpha_{min})}{2}$ is constant and $\alpha_j = \alpha(\alpha_{{st}_j})$, $\phi_j = \phi( \alpha( \alpha_{{st}_j}))$, $\eta_j = \mu - \alpha_j/2$ and $\alpha_{{st}_j}$ and $w_j$ are the quadrature points and weights respectively.
%
%
%
%
%
%
The linear system is then constructed as
\begin{equation}
	\label{Eq: linear sys PG}
	\mathcal{S} \vec{c} = \vec{\mathcal{F}},
\end{equation}
in which the entries of the stiffness matrix $\mathcal{S}$ and force vector $\vec{\mathcal{F}}$ are given by
\begin{equation}
	\label{Eq: Stifness linear sys PG}
	\mathcal{S}_{kn} =  
	\sum_{j=0}^{Q-1} 
	J_\alpha \,  w_j \, \phi_j \, (\frac{2}{T})^{\alpha_j} \,\,
	C_{kn} \,
	\int_{-1}^{1}
	\prescript{(1)}{}{ \mathcal{P}}_{n}^{\eta_j}(\xi)
	\prescript{(2)}{}{ \mathcal{P}}_{k}^{\eta_j}(\xi)
	d\xi
\end{equation}
and
\begin{equation}
	\label{Eq: Force linear sys PG}
	\mathcal{F}_{k} = 
	\int_{-1}^{1}
	f(\xi)
	\prescript{(2)}{}{ \mathcal{P}}_{k}^{\mu}(\xi) \,\,
	d\xi
\end{equation}
respectively, where
\begin{equation*}
	C_{kn} \equiv \frac{\Gamma(n +\mu)}{\Gamma(n + \eta_j)}
	\frac{\Gamma(k + \mu)}{\Gamma(k + \eta_j)}.
\end{equation*}
\begin{rem}
	For each fixed $j$ and given the structure of $\prescript{(1)}{}{ \mathcal{P}}_{n}^{\eta_j}(\xi)$ and $\prescript{(2)}{}{ \mathcal{P}}_{k}^{\eta_j}(\xi)$, the above integrations take the form 
	\begin{eqnarray}
		\nonumber
		\int_{-1}^{1}
		\prescript{(1)}{}{ \mathcal{P}}_{n}^{\eta_j}(\xi)
		\prescript{(2)}{}{ \mathcal{P}}_{k}^{\eta_j}(\xi)
		d\xi & = & 
		\int_{-1}^{1}
		(1-\xi)^{\eta_j} (1+\xi)^{\eta_j} 
		\,\,
		P^{\eta_j , -\eta_j}_{k-1}(\xi) \,\,
		P^{-\eta_j , \eta_j}_{n-1}(\xi) \,\,
		d\xi,
		\\
		\nonumber
		\int_{-1}^{1}
		f(\xi) \prescript{(2)}{}{ \mathcal{P}}_{k}^{\mu}(\xi) \,\, 
		d\xi & = & 
		\int_{-1}^{1} (1-\xi)^{\mu} \,\, f(\xi) \,\, P^{\mu , -\mu}_{k-1}(\xi) \,\,
		d\xi,
	\end{eqnarray}
	and therefore, the full stiffness matrix $\mathcal{S}$ and vector $\vec{\mathcal{F}}$ can be constructed accurately using a proper Gauss-Lobatto-Jacobi rule corresponding to the weight function $(1-\xi)^{\eta_j} (1+\xi)^{\eta_j} $ and $(1-\xi)^{\eta_j}$ respectively.
\end{rem}
%

%
\subsection{Discrete Stability Analysis}
\label{Sec: discrete stability analysis} 
%

In this section, we investigate the stability of the numerical scheme, developed based on the aforementioned choice of solution and test function space considering the bilinear form in \eqref{Eq: discrete distributed bilinear form}.
\vspace{0.2 cm}
\begin{theorem}
	\label{Thm: Stability of scheme}
	The scheme \eqref{Eq: discrete distributed bilinear form} is stable and the following inequality holds
	\begin{equation*}
		\underset{u_N \in U_N}{inf} \underset{v_N \in V_N}{sup} \frac{a_h(u_N , v_N)}{\norm{u_N}_{U_N} \, \norm{v_N}_{V_N}}
		\geq
		\beta.
	\end{equation*}
	\begin{proof}
		Recalling from \eqref{Eq: distributed bilinear form}
		\begin{equation}
			\nonumber
			a_h(u_N , v_N) = 
			\int_{\alpha_{min}}^{\alpha_{max}} \phi(\alpha)
			a_h^\alpha(u_N , v_N) \,\,
			d\alpha,
		\end{equation}
		where, by lemma \eqref{Lem: Frac derive of Poly-fractonomials},
		\begin{align}
			\nonumber
			a_h^\alpha(u_N , v_N) 
			& = 
			a_h^\alpha(\sum_{n=1}^{N} a_n \prescript{(1)}{}{ \mathcal{P}}_{n}^{\mu}(\xi) , \sum_{k=1}^{N} a_k \prescript{(2)}{}{ \mathcal{P}}_{k}^{\mu}(\xi)),
			\quad   \forall \alpha \in (\alpha_{min} , \alpha_{max}),
			\\
			\nonumber
			& 
			=\sum_{n=1}^{N} \sum_{k=1}^{N} a_n a_k C_{n,k}^{\mu,\alpha}
			\int_{-1}^{1}
			(1 - \xi)^{\tilde{\mu}}
			(1 + \xi)^{-\tilde{\mu}} 
			(1 + \xi)^{2\tilde{\mu}} 
			P^{\tilde{\mu} , -\tilde{\mu}}_{k-1}(\xi) \,\,
			P^{-\tilde{\mu} , \tilde{\mu}}_{n-1}(\xi) \,\,
			d\xi, 
		\end{align}
		in which, $\tilde{\mu} = \mu - \alpha/2$, $C_{n,k}^{\mu,\alpha} = \frac{\Gamma(n +\mu)}{\Gamma(n + \tilde{\mu})} \frac{\Gamma(k + \mu)}{\Gamma(k + \tilde{\mu})}$ and $(1 + \xi)^{\tilde{\mu}}$ is replaced by $(1 + \xi)^{-\tilde{\mu}}(1 + \xi)^{2\tilde{\mu}}$. We let $\tilde \mu > -1/2 $, hence the function $(1 + \xi)^{2\tilde{\mu}}$ is nonnegative, nondecreasing, continuous and integrable in the integration domain. Therefore, 
		\begin{equation*}
			a_h^\alpha(u_N , v_N)
			\geq
			\sum_{n=1}^{N} \sum_{k=1}^{N} a_n a_k C_{n,k}^{\mu,\alpha} C_0
			\int_{-1}^{1}
			(1 - \xi)^{\tilde{\mu}}
			(1 + \xi)^{-\tilde{\mu}} 
			P^{\tilde{\mu} , -\tilde{\mu}}_{k-1}(\xi) 
			P^{-\tilde{\mu} , \tilde{\mu}}_{n-1}(\xi) \,\,
			d\xi.
		\end{equation*}
		Moreover, the Jacobi polynomial $P^{-\tilde{\mu} , \tilde{\mu}}_{n-1}(\xi)$ can be expanded as:
		\begin{equation*}
			P^{-\tilde{\mu} , \tilde{\mu}}_{n-1}(\xi) = 
			\sum_{j=0}^{n-1} 
			\binom{n-1+j}{j} \binom{n-1+\tilde{\mu}}{n-1-j} (-1)^{j-n+1} (\frac{1}{2})^j (1 + \xi)^j.
		\end{equation*}
		By multiplying and dividing each term within the summation by $\binom{n-1-\tilde{\mu}}{n-1-j}$ we get
		\begin{equation*}
			P^{-\tilde{\mu} , \tilde{\mu}}_{n-1}(\xi) = 
			\sum_{j=0}^{n-1} 
			\mathcal{A}_{n,j}^{\tilde{\mu}} \binom{n-1+j}{j} \binom{n-1-\tilde{\mu}}{n-1-j} (-1)^{j-n+1} (\frac{1}{2})^j (1 + \xi)^j,
		\end{equation*}
		where $\mathcal{A}_{n,j}^{\tilde{\mu}} = \binom{n-1+\tilde{\mu}}{n-1-j} / \binom{n-1-\tilde{\mu}}{n-1-j} $ is nondecreasing, positive and bounded $\forall n, j, \tilde{\mu}$. Therefore, there exists $C_1 = C_1(n) > 0$ such that
		\begin{align*}
			a_h^\alpha(u_N , v_N)
			& \geq
			\sum_{n=1}^{N} \sum_{k=1}^{N} a_n a_k \, C_{n,k}^{\mu,\alpha} \, C_0 \, C_1(n)
			\int_{-1}^{1}
			(1 - \xi)^{\tilde{\mu}}
			(1 + \xi)^{-\tilde{\mu}} 
			P^{\tilde{\mu} , -\tilde{\mu}}_{k-1}(\xi) 
			P^{\tilde{\mu} , -\tilde{\mu}}_{n-1}(\xi) \,\,
			d\xi
			\\
			& \geq
			C_0
			\sum_{k=1}^{N} a_k^2 \,\, C_{k,k}^{\mu,\alpha} \, C_1(n) \, \epsilon_{k-1}^{\tilde{\mu} , -\tilde{\mu}},
		\end{align*}
		in which $\epsilon_{k-1}^{\tilde{\mu} , -\tilde{\mu}} = \frac{2}{2k-1} \frac{\Gamma (k + \mu) \, \Gamma (k - \mu)}{(k-1)! \, \Gamma (k)}$. Hence,
		\begin{equation}
			\label{Eq: dis. stability num}
			a_h(u_N , v_N)
			\geq
			C_0
			\int_{\alpha_{min}}^{\alpha_{max}} \phi(\alpha) \,\,
			\sum_{k=1}^{N} a_k^2 \, C_{k,k}^{\mu,\alpha} \, C_1(n) \, \epsilon_{k-1}^{\tilde{\mu} , -\tilde{\mu}} \,\,
			d\alpha.
		\end{equation}

		Moreover, we have
		\begin{equation*}
			\norm{v_N}_{V_N}^2 = 
			\int_{\alpha_{min}}^{\alpha_{max}} \phi(\alpha) \,\,
			\norm{\prescript{}{-1}{\mathcal{D}}_{x}^{\alpha/2} v_N}_{L^2(\Omega)}^{2}
			d\alpha,
		\end{equation*}
		where by considering $v_n = \sum_{k=1}^{N} a_k \prescript{(2)}{}{ \mathcal{P}}_{k}^{\mu}(\xi) $, we can write  $\forall \alpha \in (\alpha_{min} , \alpha_{max})$,
		\begin{eqnarray}
			\nonumber
			\norm{\prescript{}{-1}{\mathcal{D}}_{x}^{\alpha/2} v_N}_{L^2(\Omega)}^{2}
			&&=
			\int_{-1}^{1} 
			(\sum_{k=1}^{N} a_k \,\, 
			\frac{\Gamma(k + \mu)}{\Gamma(k + \tilde{\mu})} 
			(1 - \xi)^{\tilde{\mu}} 
			P^{\tilde{\mu} , -\tilde{\mu}}_{k-1}(\xi)  )^2 
			d\xi,
			\\
			\nonumber
			&&\leq
			\int_{-1}^{1} 
			\sum_{k=1}^{N} a_k^2 \,\, 
			\left( \frac{\Gamma(k + \mu)}{\Gamma(k + \tilde{\mu})} \right)^2
			(1 - \xi)^{2\tilde{\mu}} 
			( P^{\tilde{\mu} , -\tilde{\mu}}_{k-1}(\xi) )^2
			d\xi,
			\\
			\nonumber
			&& \text{(By Jensen Inequality)}.
		\end{eqnarray}
		By multiplying the integrand by $(1 + \xi)^{-\tilde{\mu}} (1 + \xi)^{\tilde{\mu}}$  and changing the order of summation and integration, we obtain
		\begin{eqnarray}
			\nonumber
			\norm{\prescript{}{-1}{\mathcal{D}}_{x}^{\alpha/2} v_N}_{L^2(\Omega)}^{2}
			%
			&\leq&
			\sum_{k=1}^{N} a_k^2 \,\, 
			\left( \frac{\Gamma(k + \mu)}{\Gamma(k + \tilde{\mu})} \right)^2
			\int_{-1}^{1}
			(1 - \xi)^{\tilde{\mu}} (1 + \xi)^{-\tilde{\mu}} (1 - \xi^2)^{\tilde{\mu}}
			P^{\tilde{\mu} , -\tilde{\mu}}_{k-1}(\xi) \,\,
			P^{\tilde{\mu} , -\tilde{\mu}}_{k-1}(\xi) \,\,
			d\xi,
			\\
			\nonumber
			&\leq&
			\sum_{k=1}^{N} a_k^2 \,\, 
			\left( \frac{\Gamma(k + \mu)}{\Gamma(k + \tilde{\mu})} \right)^2
			C_2
			\int_{-1}^{1}
			(1 - \xi)^{\tilde{\mu}} (1 + \xi)^{-\tilde{\mu}} 
			P^{\tilde{\mu} , -\tilde{\mu}}_{k-1}(\xi) \,\,
			P^{\tilde{\mu} , -\tilde{\mu}}_{k-1}(\xi) \,\,
			d\xi,
		\end{eqnarray}
		since $\tilde \mu > -1/2$ and consequently $(1 - \xi^2)^{\tilde{\mu}}$ is a nonnegative and integrable in the domain of integration. By the orthogonality of Jacobi polynomials, we get
		\begin{equation*}
			\norm{\prescript{}{-1}{\mathcal{D}}_{x}^{\alpha/2} v_N}_{L^2(\Omega)}^{2}
			\leq
			C_2
			\sum_{k=1}^{N} a_k^2 \,\, 
			\left( \frac{\Gamma(k + \mu)}{\Gamma(k + \tilde{\mu})} \right)^2
			\epsilon_{k-1}^{\tilde{\mu} , -\tilde{\mu}},
		\end{equation*}
		and thus
		\begin{equation}
			\label{Eq: dis. Stability den V}
			\norm{v_N}_{V_N} 
			\leq
			\sqrt{
				C_2
				\int_{\alpha_{min}}^{\alpha_{max}} \phi(\alpha) \,\,
				\sum_{k=1}^{N} a_k^2 \,\, 
				\left( \frac{\Gamma(k + \mu)}{\Gamma(k + \tilde{\mu})} \right)^2
				\epsilon_{k-1}^{\tilde{\mu} , -\tilde{\mu}} \,\,
				d\alpha
			}.
		\end{equation}
		Similarly for $\norm{u_N}_{U_N}^2$:
		\begin{equation*}
			\norm{u_N}_{U_N}^2 = 
			\int_{\alpha_{min}}^{\alpha_{max}} \phi(\alpha) \,\,
			\norm{\prescript{}{-1}{\mathcal{D}}_{x}^{\alpha/2} u_N}_{L^2(\Omega)}^{2}
			d\alpha,
		\end{equation*}
		where $\forall \alpha \in (\alpha_{min} , \alpha_{max})$:
		\begin{eqnarray}
			\nonumber
			\norm{\prescript{}{-1}{\mathcal{D}}_{x}^{\alpha/2} u_N}_{L^2(\Omega)}^{2}
			&&=
			\int_{-1}^{1} 
			(\sum_{n=1}^{N} a_n \,\, 
			\frac{\Gamma(n + \mu)}{\Gamma(n + \tilde{\mu})} 
			(1 + \xi)^{\tilde{\mu}} 
			P^{-\tilde{\mu} , \tilde{\mu}}_{n-1}(\xi)  )^2 
			d\xi,
			\\
			\nonumber
			&&\leq
			\int_{-1}^{1} 
			\sum_{n=1}^{N} a_n^2 \,\, 
			\left( \frac{\Gamma(n + \mu)}{\Gamma(n + \tilde{\mu})} \right)^2
			(1 + \xi)^{2\tilde{\mu}} 
			( P^{-\tilde{\mu} , \tilde{\mu}}_{n-1}(\xi) )^2
			d\xi,
			\\
			\nonumber
			&& \text{By Jensen Inequality}.
		\end{eqnarray}
		Following similar steps, and by multiplying the integrand by $(1 - \xi)^{-\tilde{\mu}} (1 - \xi)^{\tilde{\mu}}$  and changing the order of summation and integration, we obtain  
		\begin{eqnarray}
			\nonumber
			&&
			\norm{\prescript{}{-1}{\mathcal{D}}_{x}^{\alpha/2} u_N}_{L^2(\Omega)}^{2},
			\\
			\nonumber
			&\leq&
			\sum_{n=1}^{N} a_n^2 \,\, 
			\left( \frac{\Gamma(n + \mu)}{\Gamma(n + \tilde{\mu})} \right)^2
			\int_{-1}^{1}
			(1 - \xi)^{-\tilde{\mu}} (1 + \xi)^{\tilde{\mu}} (1 - \xi^2)^{\tilde{\mu}}
			P^{-\tilde{\mu} , \tilde{\mu}}_{n-1}(\xi) \,\,
			P^{-\tilde{\mu} , \tilde{\mu}}_{n-1}(\xi) \,\,
			d\xi,
			\\
			\nonumber
			&\leq&
			\sum_{n=1}^{N} a_n^2 \,\, 
			\left( \frac{\Gamma(n + \mu)}{\Gamma(n + \tilde{\mu})} \right)^2
			C_3
			\int_{-1}^{1}
			(1 - \xi)^{-\tilde{\mu}} (1 + \xi)^{\tilde{\mu}} 
			P^{-\tilde{\mu} , \tilde{\mu}}_{n-1}(\xi) \,\,
			P^{-\tilde{\mu} , \tilde{\mu}}_{n-1}(\xi) \,\,
			d\xi,
		\end{eqnarray}
		since $(1 - \xi^2)^{\tilde{\mu}}$ is a nonnegative and integrable in the domain of integral. Next, by the orthogonality of Jacobi polynomials,
		\begin{equation*}
			\norm{\prescript{}{-1}{\mathcal{D}}_{x}^{\alpha/2} u_N}_{L^2(\Omega)}^{2}
			\leq
			C_3
			\sum_{n=1}^{N} a_n^2 \,\, 
			\left( \frac{\Gamma(n + \mu)}{\Gamma(n + \tilde{\mu})} \right)^2
			\epsilon_{n-1}^{-\tilde{\mu} , \tilde{\mu}}.
		\end{equation*}
		Therefore,
		\begin{equation}
			\label{Eq: dis. Stability den U}
			\norm{u_N}_{U_N} 
			\leq
			\sqrt{
				C_3
				\int_{\alpha_{min}}^{\alpha_{max}} \phi(\alpha) \,\,
				\sum_{n=1}^{N} a_n^2 \,\, 
				\left( \frac{\Gamma(n + \mu)}{\Gamma(n + \tilde{\mu})} \right)^2
				\epsilon_{n-1}^{\tilde{\mu} , -\tilde{\mu}} \,\,
				d\alpha
			},
		\end{equation}
		where $\epsilon_{n-1}^{-\tilde{\mu} , \tilde{\mu}} = \epsilon_{n-1}^{\tilde{\mu} , -\tilde{\mu}}$. 
		
		Therefore, using \eqref{Eq: dis. stability num}, \eqref{Eq: dis. Stability den V} and \eqref{Eq: dis. Stability den U},
		\begin{equation*}
			\underset{u_N \in U_N}{inf} \underset{v_N \in V_N}{sup} \frac{a_h(u_N , v_N)}{\norm{u_N}_{U_N} \, \norm{v_N}_{V_N}}
			\geq
			\frac{C_0 \,\, C_{1_{min}}}{\sqrt{C_2 \,\, C_3}} = \beta.
		\end{equation*}
	\end{proof}
\end{theorem}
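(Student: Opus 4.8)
The plan is to verify the inf-sup condition constructively: rather than bounding the supremum over all of $V_N$, I would, for each fixed $u_N = \sum_{n=1}^{N} a_n \prescript{(1)}{}{\mathcal{P}}_{n}^{\mu}(\xi) \in U_N$, exhibit a single candidate test function and show it already realizes a lower bound of the required form. The natural choice exploits the duality between the two kinds of poly-fractonomials: take $v_N = \sum_{k=1}^{N} a_k \prescript{(2)}{}{\mathcal{P}}_{k}^{\mu}(\xi) \in V_N$ with \emph{the same} coefficients $a_k$. Because the trial basis is built from first-kind and the test basis from second-kind eigenfunctions, this diagonal pairing is exactly what aligns the stiffness interaction, and it is the device I expect to drive the whole estimate.

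The next step is to evaluate $a_h(u_N,v_N) = \int_{\alpha_{min}}^{\alpha_{max}} \phi(\alpha)\, a_h^\alpha(u_N,v_N)\, d\alpha$ by first working at fixed $\alpha$. Applying Lemma~\ref{Lem: Frac derive of Poly-fractonomials}, the half-order operators $\prescript{}{0}{\mathcal{D}}_{t}^{\alpha/2}$ and $\prescript{}{t}{\mathcal{D}}_{T}^{\alpha/2}$ send both families to poly-fractonomials of lowered order $\tilde{\mu} = \mu - \alpha/2$, up to the explicit Gamma-ratio factors $C_{n,k}^{\mu,\alpha}$. With the diagonal test function, $a_h^\alpha$ collapses to a weighted integral of $P^{\tilde{\mu},-\tilde{\mu}}_{k-1}(\xi)\, P^{-\tilde{\mu},\tilde{\mu}}_{n-1}(\xi)$ against the weight $(1-\xi)^{\tilde{\mu}}(1+\xi)^{\tilde{\mu}}$. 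The key difficulty is that these two Jacobi polynomials carry \emph{opposite-sign} parameters, so orthogonality does not apply directly. I would resolve this by splitting $(1+\xi)^{\tilde{\mu}} = (1+\xi)^{-\tilde{\mu}}(1+\xi)^{2\tilde{\mu}}$ --- legitimate, with $(1+\xi)^{2\tilde{\mu}}$ nonnegative, nondecreasing and integrable precisely because $\tilde{\mu} > -1/2$ (guaranteed since $\mu \geq \alpha_{max}/2 \geq \alpha/2$) --- bounding $(1+\xi)^{2\tilde{\mu}}$ below by its endpoint value $C_0 > 0$, and then re-expressing $P^{-\tilde{\mu},\tilde{\mu}}_{n-1}$ through its power expansion in $(1+\xi)$ and comparing binomial coefficients to convert it into $P^{\tilde{\mu},-\tilde{\mu}}_{n-1}$ times a positive, bounded factor $C_1(n)$.

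Once both polynomials carry the matching parameters $(\tilde{\mu},-\tilde{\mu})$, orthogonality with respect to $(1-\xi)^{\tilde{\mu}}(1+\xi)^{-\tilde{\mu}}$ annihilates all off-diagonal terms and leaves $a_h^\alpha(u_N,v_N) \geq C_0 \sum_{k=1}^{N} a_k^2\, C_{k,k}^{\mu,\alpha}\, C_1(n)\, \epsilon_{k-1}^{\tilde{\mu},-\tilde{\mu}}$ with $\epsilon_{k-1}^{\tilde{\mu},-\tilde{\mu}}$ the known Jacobi normalization; integrating against $\phi(\alpha) \geq 0$ then yields the numerator bound. In parallel, I would bound $\norm{u_N}_{U_N}$ and $\norm{v_N}_{V_N}$ from above: expand each norm at fixed $\alpha$, apply Jensen's inequality to pull the square inside the sum, insert a nonnegative factor $(1\mp\xi)^{\tilde{\mu}}(1\mp\xi)^{-\tilde{\mu}}$ so that a $(1-\xi^2)^{\tilde{\mu}}$ appears and can be absorbed into a constant $C_2$ (resp.\ $C_3$), and again invoke orthogonality to reduce everything to the same diagonal quantity $\int_{\alpha_{min}}^{\alpha_{max}} \phi(\alpha) \sum_{k} a_k^2 (\Gamma(k+\mu)/\Gamma(k+\tilde{\mu}))^2 \epsilon_{k-1}^{\tilde{\mu},-\tilde{\mu}}\, d\alpha$.

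Combining these three estimates, the numerator and both denominator factors are controlled by the \emph{same} diagonal quadratic form in the $a_k$, so the ratio is bounded below by a constant $\beta = C_0\, C_{1,\min}/\sqrt{C_2 C_3}$ independent of the particular $u_N$. The main obstacle I anticipate is precisely the opposite-parameter orthogonality step: ensuring that the binomial-ratio factor arising when rewriting $P^{-\tilde{\mu},\tilde{\mu}}_{n-1}$ as $P^{\tilde{\mu},-\tilde{\mu}}_{n-1}$ stays positive and bounded, and that the associated constant $C_1(n)$ admits a uniform positive lower bound $C_{1,\min}$ over the active modes, so that $\beta$ is genuinely $N$-independent and the scheme is stable.
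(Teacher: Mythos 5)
Your proposal follows essentially the same route as the paper's own proof: the diagonal test-function pairing $v_N = \sum_k a_k \prescript{(2)}{}{\mathcal{P}}_{k}^{\mu}$, the splitting $(1+\xi)^{\tilde{\mu}} = (1+\xi)^{-\tilde{\mu}}(1+\xi)^{2\tilde{\mu}}$ with lower bound $C_0$, the binomial-ratio conversion of $P^{-\tilde{\mu},\tilde{\mu}}_{n-1}$ into $P^{\tilde{\mu},-\tilde{\mu}}_{n-1}$ with factor $C_1(n)$, Jensen plus the $(1-\xi^2)^{\tilde{\mu}}$ insertion for the norm bounds, and the final constant $\beta = C_0 C_{1,\min}/\sqrt{C_2 C_3}$ all match the paper step for step. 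No substantive differences to report.
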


%
\subsection{Projection Error Analysis}
\label{Sec:Projection Error analysis} 
%

In this section, we investigate the error due to the projection of the true solution onto the defined set of basis functions.
\vspace{0.2 cm}
\begin{theorem}
	\label{Thm: Error analysis}
	Let $\frac{d^r u}{dt^r} \in U$, that is, $\int_{\alpha_{min}}^{\alpha{max}} \phi(\alpha) \, \norm{\prescript{}{0}{\mathcal{D}}_{t}^{r + \alpha/2} u \,}^2_{L^2(\Omega)} \, d\alpha < \infty$ and $u_N$ denotes the projection of the exact solution $u$. Then,
	\begin{equation}
		\label{Eq: projection error}
		\norm{u - u_N}^{2}_{U} \leq \mathcal{C} \, N^{-2r} \, \int_{\alpha_{min}}^{\alpha_{max}}  \phi(\alpha) \norm{\prescript{}{0}{\mathcal{D}}_{t}^{r + \alpha/2} u \,}^2_{L^2(\Omega)} d\alpha.
	\end{equation}
	\begin{proof}
		By performing an affine mapping from $t\in[0,T]$ to the standard domain $\xi \in [-1,1]$, we expand the exact solution $u$ in terms of the following infinite series of Jacobi poly-fractonomials
		\begin{equation}
			\label{Eq: exact solution expansion}
			u(\xi) = \sum_{n=1}^{\infty} c_n  \prescript{(1)}{}{ \mathcal{P}}_{n}^{\mu}(\xi).
		\end{equation}
		Then, we note that by using \eqref{Lem: Frac derive of Poly-fractonomials} and \eqref{Eq: RL in xL-xR},
		\begin{align*}
			\prescript{}{0}{\mathcal{D}}_{t}^{r + \alpha/2} \,\,  u(\xi(t))
			& = (\frac{2}{T})^{r+\alpha/2} \, \prescript{}{-1}{\mathcal{I}}_{\xi}^{\mu - \alpha/2} \, \frac{d^r}{d\xi^r}  \prescript{}{-1}{\mathcal{D}}_{\xi}^{\mu} \,\, u(\xi),
			\\
			&=
			(\frac{2}{T})^{r+\alpha/2} \, \sum_{n=1}^{\infty} c_n  \left(\frac{\Gamma(n + \mu)}{\Gamma(n)}\right) 
			\prescript{}{-1}{\mathcal{I}}_{\xi}^{\mu - \alpha/2} \,\, \frac{d^r}{d\xi^r}[P_{n-1}(\xi)],
		\end{align*}
		where, 
		\begin{align*}
			\frac{d^r}{d\xi^r}[P_{n-1}(\xi)] =
			\left\{
			\begin{array} {ll}
				\frac{(n-1+r)!}{2r(n-1)!} P_{n-1-r}^{r,r} (\xi) & r<n,
				\\ 
				0 & r\geq n.
			\end{array}
			\right.
		\end{align*}
		Thus, by multiplying with a proper weight function, $w(\xi) = (1 + \xi)^{r/2-\mu+\alpha/2} \,\, (1 - \xi)^{r/2}$, the right-hand-side of \eqref{Eq: projection error} takes the form
		\begin{align}
			\label{Eq: projection error - R1}
			\int_{\alpha_{min}}^{\alpha_{max}}  \phi(\alpha) \, (\frac{2}{T})^{2r+\alpha/2} \,
			&\bigg|\bigg|
			(1 + \xi)^{r/2-\mu+\alpha/2} \,\, (1 - \xi)^{r/2} \,\,
			\\
			\nonumber
			&
			\sum_{n=r+1}^{\infty} c_n  \left(\frac{\Gamma(n + \mu)}{\Gamma(n)}\right) \frac{(n-1+r)!}{2 r (n-1)!} \,\,
			\prescript{}{-1}{\mathcal{I}}_{\xi}^{\mu - \alpha/2} \,\, 
			P_{n-1-r}^{r,r}(\xi)
			\bigg|\bigg| ^2_{L^2(\Omega)}\,
			d\alpha.
		\end{align}
		By expanding the Jacobi polynomial as 
		\begin{align*}
			P_{n-1-r}^{r,r}(\xi) = (-1)^{n-1-r} \sum_{j=0}^{n-1-r} \, \binom{n-1+r+j}{j} \binom{n-1}{n-1-r-j} (\frac{-1}{2})^j \, (1 + \xi)^j,
		\end{align*}
		and changing the order of summation and the integration, we obtain the fractional integral as
		\begin{align}
			\label{Eq: projection error - R3}
			&\prescript{}{-1}{\mathcal{I}}_{\xi}^{\mu - \alpha/2} \,\, P_{n-1-r}^{r,r}(\xi),
			\\
			\nonumber
			& =
			(-1)^{n-1-r} \, (1+\xi)^{\mu-\alpha/2}
			\sum_{j=0}^{n-1-r} \, 
			\binom{n-1+r+j}{j} \binom{n-1}{n-1-r-j} (\frac{-1}{2})^j \, 
			\frac{\Gamma(1+j)}{\Gamma(1+j+\mu-\alpha/2)} \, (1+\xi)^{j},
			\\
			\nonumber
			& = 
			(-1)^{n-1-r} \, (1+\xi)^{\mu-\alpha/2} \,\,
			\sum_{q=0}^{n-1-r} \, \tilde c_q(\alpha) \,\, P_{q}^{r,r} (\xi),
		\end{align}
		where, the coefficient, $\tilde c_q(\alpha)$, can be obtained using the orthogonality of Jacobi polynomials. Hence, by taking $C_n =  c_n \, (-1)^{n-1} \, \frac{\Gamma(n + \mu)}{\Gamma(n)}$, \eqref{Eq: projection error - R1} takes the form
		\begin{align}
			\label{Eq: projection error - R4}
			&
			\int_{\alpha_{min}}^{\alpha_{max}}  \phi(\alpha) \, (\frac{2}{T})^{2r+\alpha} \, 
			\norm{
				(1 + \xi)^{r/2} \,\, (1 - \xi)^{r/2} \,\,
				\sum_{n=r+1}^{\infty}  \,  C_n \, \frac{(n-1+r)!}{2 r (n-1)!} \,\,
				(-1)^{-r} \, 
				\sum_{q=0}^{n-1-r} \, \tilde c_q(\alpha) \, P_{q}^{r,r}(\xi)
			}^2_{L^2(\Omega)}\,
			d\alpha,
			\\
			\nonumber
			= &
			\int_{\alpha_{min}}^{\alpha_{max}}  \phi(\alpha) \, (\frac{2}{T})^{2r+\alpha} \, 
			\left[
			\int_{-1}^{1}
			(1 + \xi)^{r} \,\, (1 - \xi)^{r} \,\, 
			\left(
			\sum_{n=r+1}^{\infty} \, C_n \, \frac{(n-1+r)!}{2 r (n-1)!} \,\,
			\sum_{q=0}^{n-1-r} \, \tilde c_q(\alpha) \, P_{q}^{r,r}(\xi)
			\right)^2
			d\xi\,
			\right]
			d\alpha.
		\end{align}
		Then, we change the order of two summations in order to use the orthogonality of Jacobi polynomials and obtain
		\begin{align}
			\label{Eq: projection error - R5}
			&
			\int_{\alpha_{min}}^{\alpha_{max}}  \phi(\alpha) \, (\frac{2}{T})^{2r+\alpha} \, 
			\left[
			\int_{-1}^{1}
			(1 + \xi)^{r} \,\, (1 - \xi)^{r} \,\,
			\left(
			\sum_{q=0}^{\infty} \,\,
			\sum_{n=r+1+q}^{\infty} \, C_n \, \frac{(n-1+r)!}{2 r (n-1)!} \,\,
			\, \tilde c_q(\alpha) \, P_{q}^{r,r} (\xi)
			\right)^2
			d\xi\,
			\right]
			d\alpha,
			\\
			\nonumber
			= &
			\int_{\alpha_{min}}^{\alpha_{max}}  \phi(\alpha) \, (\frac{2}{T})^{2r+\alpha} \, 
			\left[
			\int_{-1}^{1}
			(1 + \xi)^{r} \,\, (1 - \xi)^{r} \,\,
			\sum_{q=0}^{\infty} \,\,
			\left(\sum_{n=r+1+q}^{\infty} \, C_n \, \frac{(n-1+r)!}{2 r (n-1)!}\right)^2 \,\,
			\tilde c^2_q(\alpha) \, (P_{q}^{r,r} (\xi))^2
			d\xi\,
			\right]
			d\alpha,
			\\
			\nonumber
			= &
			\sum_{q=0}^{\infty} \,\,
			\left(\sum_{n=r+1+q}^{\infty} \, C_n \, \frac{(n-1+r)!}{2 r (n-1)!}\right)^2 \,\,
			\int_{-1}^{1}
			(1 + \xi)^{r} \,\, (1 - \xi)^{r} \,\,
			(P_{q}^{r,r} (\xi))^2
			d\xi\,
			\int_{\alpha_{min}}^{\alpha_{max}}  \phi(\alpha) \, (\frac{2}{T})^{2r+\alpha} \, \tilde c^2_q(\alpha) \,
			d\alpha,
			\\
			\nonumber
			= &
			\sum_{q=0}^{\infty} \,\,
			\left(\sum_{n=r+1+q}^{\infty} \, C_n \, \frac{(n-1+r)!}{2 r (n-1)!}\right)^2 \,\,
			\frac{2^{2r+1}}{2q+2r+1} \frac{((q+1)!)^2}{q!(q+2r)!}
			\,
			\int_{\alpha_{min}}^{\alpha_{max}}  \phi(\alpha) \, (\frac{2}{T})^{2r+\alpha} \, \tilde c^2_q(\alpha) \,
			d\alpha,
			\\
			\nonumber
			= &
			\frac{2^{2r+1} (\frac{2}{T})^{2r}}{(2r)^2} \, 
			\sum_{q=0}^{\infty} \,\,
			\left(\sum_{n=r+1+q}^{\infty} \, C_n \, \frac{(n-1+r)!}{(n-1)!}\right)^2 \,\,
			\frac{(q+1)^2 \, q!}{(2q+2r+1)(q+2r)!}
			\,
			\int_{\alpha_{min}}^{\alpha_{max}}  \phi(\alpha) \, (\frac{2}{T})^{\alpha} \, \tilde c^2_q(\alpha) \,
			d\alpha.
		\end{align}
		Moreover, using the approximation of the solution given in \eqref{Eq: Approx SolN Modal} and by multiplying with the proper weight functions, the left-hand-side of \eqref{Eq: projection error} takes the form
		\begin{align}
			\label{Eq: projection error - L1}
			&\norm{(1+\xi)^{-\mu+\alpha/2} \, (u - u_N)}^{2}_{U} ,
			\\
			\nonumber
			& =
			\int_{\alpha_{min}}^{\alpha_{max}}  \phi(\alpha) \, (\frac{2}{T})^{\alpha} 
			\bigg|\bigg|
			(1+\xi)^{-\mu+\alpha/2} 
			\sum_{n=N+1}^{\infty} c_n \,\, \left(\frac{\Gamma(n + \mu)}{\Gamma(n)}\right) \prescript{}{-1}{\mathcal{I}}_{\xi}^{\mu - \alpha/2}  P_{n-1}(\xi) 
			\bigg|\bigg|
			^2_{L^2(\Omega)} 
			d\alpha,
		\end{align}
		in which, $\prescript{}{-1}{\mathcal{D}}_{\xi}^{\alpha/2} = \prescript{}{-1}{\mathcal{D}}_{\xi}^{\alpha/2 - \mu + \mu} =  \prescript{}{-1}{\mathcal{I}}_{\xi}^{\mu - \alpha/2} \,\, \prescript{}{-1}{\mathcal{D}}_{\xi}^{\mu}$ and the fractional derivative is taken using \eqref{Lem: Frac derive of Poly-fractonomials}. By expanding the Legendre polynomial as
		\begin{align*}
			P_{n-1}(\xi) = (-1)^{n-1} \sum_{j=0}^{n-1} \, \binom{n-1+j}{j} \binom{n-1}{n-1-j} (\frac{-1}{2})^j \, (1 + \xi)^j, \quad
		\end{align*}
		and following similar steps as in \eqref{Eq: projection error - R3}, we obtain the fractional integral as
		\begin{align}
			\label{Eq: projection error - L2}
			\prescript{}{-1}{\mathcal{I}}_{\xi}^{\mu - \alpha/2}  P_{n-1}(\xi)
			= (-1)^{n-1} \, (1 + \xi)^{\mu-\alpha/2} \sum_{q=0}^{n-1} \, \tilde{a}_q(\alpha) \, P_q(\xi),
		\end{align}
		where the coefficient, $\tilde a_q(\alpha)$, can be obtained using the orthogonality of Legendre polynomials. Hence, \eqref{Eq: projection error - L1} takes the form
		\begin{align}
			\label{Eq: projection error - L3}
			\norm{(1+\xi)^{-\mu+\alpha/2} \, (u - u_N)}^{2}_{U}
			=\int_{\alpha_{min}}^{\alpha_{max}}  \phi(\alpha) \, (\frac{2}{T})^{\alpha} \,\,
			\int_{-1}^{1}
			\left(
			\sum_{n=N+1}^{\infty} \,  C_n \,\, 
			\sum_{q=0}^{n-1} \, \tilde{a}_q(\alpha) \, P_q(\xi) \,
			\right)^2
			d\xi \,
			d\alpha,
		\end{align}
		in which, $C_n =  c_n \, (-1)^{n-1} \, \frac{\Gamma(n + \mu)}{\Gamma(n)}$. We change the order of two summations to use the orthogonality of Legendre polynomials and obtain
		\begin{align}
			\label{Eq: projection error - L4}
			&
			\int_{\alpha_{min}}^{\alpha_{max}}  \phi(\alpha) \, (\frac{2}{T})^{\alpha} \,\,
			\int_{-1}^{1}
			\left(
			\sum_{q=0}^{N}  \sum_{n=q+1}^{\infty}  C_n \,\, \tilde{a}_q(\alpha) \, P_q(\xi) \,
			+ \sum_{q=N+1}^{\infty}  \sum_{n=q+1}^{\infty}  C_n \,\, \tilde{a}_q(\alpha) \, P_q(\xi) \,
			\right)^2
			d\xi \,
			d\alpha,
			\\
			\nonumber
			& =
			\int_{\alpha_{min}}^{\alpha_{max}}  \phi(\alpha) \, (\frac{2}{T})^{\alpha} \,\,
			\left(
			\sum_{q=0}^{N} \left( \sum_{n=q+1}^{\infty}  C_n \right)^2 \tilde{a}^2_q(\alpha) \, 
			\int_{-1}^{1} \, (P_q(\xi))^2 \, d\xi
			+ 
			\sum_{q=N+1}^{\infty} \left( \sum_{n=q+1}^{\infty}  C_n \right)^2 \tilde{a}^2_q(\alpha) \, 
			\int_{-1}^{1} \, (P_q(\xi))^2 \, d\xi
			\right) \,
			d\alpha,
			\\
			\nonumber
			& \leq
			\sum_{q=N+1}^{\infty} \left( \sum_{n=q+1}^{\infty}  C_n \right)^2 
			\frac{2}{2q+1} \,
			\int_{\alpha_{min}}^{\alpha_{max}}  \phi(\alpha) \, (\frac{2}{T})^{\alpha} \, \tilde{a}^2_q(\alpha) \, d\alpha,
			\\
			\nonumber
			& \leq
			\left( \frac{N!}{(N-r)!} \right)^2
			\sum_{q=N+1}^{\infty} \left( \sum_{n=q+1}^{\infty}  C_n \, \frac{(n-1+r)!}{(n-1)!} \right)^2 
			\frac{2}{2q+1} \, \frac{(q+1)^2 \, q!}{(2q+2r+1)(q+2r)!} \,
			\int_{\alpha_{min}}^{\alpha_{max}}  \phi(\alpha) \, (\frac{2}{T})^{\alpha} \, \tilde{a}^2_q(\alpha) \, d\alpha,
			\\
			\nonumber
			& \leq
			\left( \frac{N!}{(N-r)!} \right)^2
			\int_{\alpha_{min}}^{\alpha_{max}}  \phi(\alpha) \, (\frac{2}{T})^{2r+\alpha}\, 
			\norm{(1 + \xi)^{r/2-\mu+\alpha/2} \,\, (1 - \xi)^{r/2} \,\, \prescript{}{-1}{\mathcal{D}}_{\xi}^{r + \alpha/2} u \,}^2_{L^2(\Omega)} d\alpha,
			\\
			\nonumber
			& \leq
			\left( \frac{N!}{(N-r)!} \right)^2
			\int_{\alpha_{min}}^{\alpha_{max}}  \phi(\alpha) \, 
			\norm{\prescript{}{0}{\mathcal{D}}_{t}^{r + \alpha/2} u \,}^2_{L^2(\Omega)} d\alpha.
		\end{align}
		Therefore, 
		\begin{align*}
			\norm{u - u_N}^{2}_{U} \leq
			\norm{(1+\xi)^{-\mu+\alpha/2} \, (u - u_N)}^{2}_{U} \leq
			\mathcal{C} \, N^{-2r} \, \int_{\alpha_{min}}^{\alpha_{max}}  \phi(\alpha) \norm{\prescript{}{0}{\mathcal{D}}_{t}^{r + \alpha/2} u \,}^2_{L^2(\Omega)} d\alpha.
		\end{align*}
	\end{proof}
\end{theorem}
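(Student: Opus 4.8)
The plan is to diagonalize both the error norm and the smoothness seminorm against the natural orthogonal bases, so that each side reduces to an explicit series in the squared modal coefficients. First I would map the problem to the standard interval $[-1,1]$ and expand the exact solution in the Jacobi poly-fractonomials of the first kind, $u(\xi)=\sum_{n=1}^{\infty} c_n \prescript{(1)}{}{\mathcal{P}}_n^{\mu}(\xi)$, so that the projection is simply the truncation $u_N=\sum_{n=1}^{N} c_n \prescript{(1)}{}{\mathcal{P}}_n^{\mu}(\xi)$ and the error is the tail $u-u_N=\sum_{n=N+1}^{\infty} c_n \prescript{(1)}{}{\mathcal{P}}_n^{\mu}(\xi)$. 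Since the $U$-norm is built from $\int_{\alpha_{min}}^{\alpha_{max}}\phi(\alpha)\,\norm{\prescript{}{0}{\mathcal{D}}_t^{\alpha/2}(\cdot)}_{L^2(\Omega)}^2\,d\alpha$, the key tool is Lemma \ref{Lem: Frac derive of Poly-fractonomials}, which keeps me inside the poly-fractonomial family under fractional differentiation.

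The central computation I would carry out is to rewrite $\prescript{}{-1}{\mathcal{D}}_\xi^{\alpha/2}$ through the composition $\prescript{}{-1}{\mathcal{D}}_\xi^{\alpha/2}=\prescript{}{-1}{\mathcal{I}}_\xi^{\mu-\alpha/2}\,\prescript{}{-1}{\mathcal{D}}_\xi^{\mu}$, using that $\prescript{}{-1}{\mathcal{D}}_\xi^{\mu}\prescript{(1)}{}{\mathcal{P}}_n^{\mu}$ is proportional to the Legendre polynomial $P_{n-1}(\xi)$ (the $\sigma=\mu$ case noted after Lemma \ref{Lem: Frac derive of Poly-fractonomials}). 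Expanding each Legendre/Jacobi polynomial explicitly in powers of $(1+\xi)$ and applying the Riemann–Liouville integral termwise via $\prescript{}{-1}{\mathcal{I}}_\xi^{s}(1+\xi)^j=\frac{\Gamma(j+1)}{\Gamma(j+1+s)}(1+\xi)^{j+s}$, I can recollect the result as a finite Jacobi series $\prescript{}{-1}{\mathcal{I}}_\xi^{\mu-\alpha/2}P_{n-1}(\xi)=(-1)^{n-1}(1+\xi)^{\mu-\alpha/2}\sum_{q=0}^{n-1}\tilde a_q(\alpha)P_q(\xi)$. Multiplying by the weight $(1+\xi)^{-\mu+\alpha/2}$ strips the common prefactor, after which the $L^2$-norm collapses (by orthogonality of the $P_q$) into a sum of the form $\sum_q (\sum_{n>q} C_n)^2 \tilde a_q^2(\alpha)\,\frac{2}{2q+1}$, with $C_n=c_n(-1)^{n-1}\Gamma(n+\mu)/\Gamma(n)$.

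The same machinery applied to $\prescript{}{0}{\mathcal{D}}_t^{r+\alpha/2}u$ produces an analogous series, except that differentiating $P_{n-1}$ exactly $r$ times introduces the factors $\frac{d^r}{d\xi^r}P_{n-1}=\frac{(n-1+r)!}{2^r(n-1)!}P_{n-1-r}^{r,r}(\xi)$, so the derivative seminorm carries the extra weights $\frac{(n-1+r)!}{(n-1)!}$ inside the coefficient sums together with the Jacobi normalization $\frac{(q+1)^2 q!}{(2q+2r+1)(q+2r)!}$. The decisive estimate is then a termwise comparison of the two coefficient series: for every tail index $n\geq N+1$ one has $\frac{(n-1)!}{(n-1+r)!}\leq \frac{N!}{(N+r)!}$, so the bare error coefficient $C_n$ is dominated by $\frac{N!}{(N+r)!}$ times the weighted derivative coefficient $C_n\frac{(n-1+r)!}{(n-1)!}$. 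Squaring this ratio and using $\frac{N!}{(N+r)!}=\mathcal{O}(N^{-r})$ yields the factor $N^{-2r}$, while the remaining sums and $\alpha$-integrals are precisely those of the derivative norm on the right-hand side. Finally, the genuine norm is recovered from the weighted one through $\norm{u-u_N}_U^2\leq\norm{(1+\xi)^{-\mu+\alpha/2}(u-u_N)}_U^2$, since the weight is bounded on $[-1,1]$.

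I expect the main obstacle to be the bookkeeping in the two parallel computations: expressing each fractional integral of a Jacobi/Legendre polynomial as a finite Jacobi series and tracking the coefficients $\tilde a_q(\alpha)$ and $\tilde c_q(\alpha)$ so that, after swapping the order of the double summations $\sum_q\sum_{n>q}$ and invoking orthogonality, the \emph{same} $q$-indexed quantities appear on both sides and can be matched. Two technical points must be watched throughout: the comparison bound must hold uniformly in $\alpha\in[\alpha_{min},\alpha_{max}]$ so that integration against $\phi$ is legitimate, and the weight functions used to trigger orthogonality (powers of $(1\pm\xi)$ with exponent $\pm(\mu-\alpha/2)$) must remain nonnegative and integrable, which is guaranteed once $\mu-\alpha/2>-1/2$ over the whole distribution range.
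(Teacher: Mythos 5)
Your proposal follows essentially the same route as the paper's own proof: expanding $u$ in Jacobi poly-fractonomials so that $u_N$ is the truncation, using the composition $\prescript{}{-1}{\mathcal{D}}_{\xi}^{\alpha/2} = \prescript{}{-1}{\mathcal{I}}_{\xi}^{\mu-\alpha/2}\,\prescript{}{-1}{\mathcal{D}}_{\xi}^{\mu}$ together with Lemma \ref{Lem: Frac derive of Poly-fractonomials} to reduce both sides to Legendre/Jacobi series, stripping the fractional prefactor with the weight $(1+\xi)^{-\mu+\alpha/2}$ so orthogonality diagonalizes the norms, and then comparing the two coefficient series termwise to extract the $N^{-2r}$ factor before undoing the weight. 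Your bookkeeping is in fact slightly cleaner than the paper's: the correct factors are $\frac{(n-1+r)!}{2^r(n-1)!}$ in the Legendre derivative formula and $\frac{(n-1)!}{(n-1+r)!}\leq\frac{N!}{(N+r)!}=\mathcal{O}(N^{-r})$ in the decisive comparison, exactly as you write, whereas the paper misprints these as $\frac{(n-1+r)!}{2r(n-1)!}$ and $\left(\frac{N!}{(N-r)!}\right)^2$ while still (correctly) concluding the $N^{-2r}$ rate.
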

\begin{rem}
	Since the inf-sup condition holds (see Theorem \eqref{Thm: Stability of scheme}), by the Banach-Ne$\breve{c}$as-Babu$\breve{s}$ka theorem \cite{ern2013theory}, the error in the numerical scheme is less that or equal to a constant times the projection error. Choosing the projection $u_N$ in Theorem \eqref{Thm: Error analysis}, we infer the spectral accuracy of the scheme.
\end{rem}


%
\section{Fractional Collocation Method: Nodal Expansion}
\label{Sec: Modal: Collocation Method}
%
Next, we represent the solution via the following poly-fractonomial \textit{nodal} expansion as
\begin{equation}
	\label{Eq: Approx SolN Fractional Lagrange}
	u_N(\xi) = \sum_{j=1}^{N} u_N(\xi_j) \, h^{\mu}_j(\xi),
\end{equation}
where $h^{\mu}_j(\xi)$ represent \textit{fractional Lagrange interpolants} FLIs, which are all of fractional order $(N+\mu-1)$ and constructed using the aforementioned interpolations points $-1=\xi_1 < \xi_2<\cdots<\xi_N=1$ as:
\begin{equation}
	\label{Eq: Fractional Lagrange Interpolants}
	h^{\mu}_j(\xi) = \Big( \frac{\xi -x_1}{x_j-x_1} \Big)^{\mu} \prod_{\substack{k=1 \\ k\neq j}}^{N} \Big( \frac{\xi -x_k}{x_j-x_k} \Big),\,\,\,\quad   j=2,3,\cdots,N.
\end{equation}
Because of the homogeneous Dirichlet boundary condition(s) in \eqref{Eq: distributional ICs-1} and \eqref{Eq: distributional ICs-2}, $u_N(-1)= 0$, and thus we only construct $h^{\mu}_j(\xi)$ for $j=2,3,\cdots,N$. We note that FLIs satisfy the Kronecker delta property, i.e., $h^{\mu}_j(\xi_k) = \delta_{jk}$, at interpolation points, however they vary as a poly-fractonomial between $\xi_k$'s.  

%
\subsection{Differentiation Matrices $\textbf{D}^{\alpha}$ and $\textbf{D}^{1+\alpha}$, $\alpha \in (0,1)$}
\label{Sec: Diff Mat}
%
By breaking the domain of integration in $\alpha$, \eqref{Eq: Distributed derivative-I} takes the form
\begin{equation}
	\label{Eq: Distributed derivative for coll - I}
	\int_{\alpha_{min}}^{1} \phi(\alpha) \prescript{}{0}{\mathcal{D}}_{t}^{\alpha} u(t) d\alpha
	+ \int_{1}^{\alpha_{max}} \phi(\alpha) \prescript{}{0}{\mathcal{D}}_{t}^{\alpha} u(t) d\alpha
	= f(t; u), \quad \forall t \in (0,T].
\end{equation}
Following \cite{Zayernouri14-SIAM-Collocation}, we obtain the corresponding fractional differentiation matrices $\textbf{D}^{\alpha}$ and $\textbf{D}^{1+\alpha}$, $\alpha \in (0,1)$ by substituting \eqref{Eq: Fractional Lagrange Interpolants} in \eqref{Eq: Approx SolN Fractional Lagrange} and taking the $\alpha$-th order fractional derivative. These matrices are given as:
\begin{eqnarray}
	\label{Eq: Diff Matrix Sigma Def}
	\textbf{D}^{\alpha}_{ij} = \frac{1}{(\xi_j+1)^{\mu}} \, \sum_{n=1}^{N}\beta^{j}_n \,
	\sum_{q=  \lceil \alpha - \mu \rceil }^{n-1}\, b_{nq} \,  (\xi_i+1)^{q+\mu-\alpha},
\end{eqnarray}
and
\begin{eqnarray}
	\label{Eq: Diff Matrix Sigma plus one Def}
	\textbf{D}^{1+\alpha}_{ij} = \frac{1}{(\xi_j+1)^{\mu}}\Big[ \, \sum_{n=1}^{N}\beta^{j}_n \,
	\sum_{q=  \lceil \alpha - \mu \rceil }^{n-1}\,b_{nq}(q+\mu-\alpha)\, (\xi_i+1)^{q+\mu-\alpha-1}\Big],
\end{eqnarray}
in which $ \lceil \alpha-\mu \rceil $ denotes the ceiling of $\alpha-\mu$ and
\begin{eqnarray}
	\label{Eq: bnq}
	b_{nq} = (-1)^{n+q-1} (\frac{1}{2})^{q} 
	\begin{pmatrix} n-1+ q \\ q \end{pmatrix}
	\begin{pmatrix} n-1+\mu  \\ n-1-q \end{pmatrix}
	\frac{\Gamma(q+\mu+1)}{\Gamma(q+\mu-\alpha +1)}.
\end{eqnarray}
The coefficients, $\beta^{j}_n$, are the coefficients in expansion of the polynomial $p_j(\xi) =\prod_{\substack{k=1 \\ k\neq j}}^{N} \Big( \frac{\xi -\xi_k}{\xi_j -\xi_k} \Big)$ in terms of Jacobi polynomials as
\begin{equation}
	\label{Eq: Poly representation}
	\prod_{\substack{k=1 \\ k\neq j}}^{N} \Big( \frac{\xi -\xi_k}{x_j-x_k} \Big) = \sum_{n=1}^{N} \beta^{j}_n P^{-\mu,\mu}_{n-1}(\xi).   
\end{equation}
Due to the orthogonality of the Jacobi poly-fractonomials $P^{-\mu,\mu}_{n-1}(\xi)$ with respect to the weight function $w(\xi) = (1-\xi)^{-\mu}(1+\xi)^{-\mu}$, these coefficients can be computed efficiently only \textit{once} by employing a proper Guass-Lobatto-Jacobi quadrature rule.

Therefore, by substituting the nodal expansion \eqref{Eq: Fractional Lagrange Interpolants} into \eqref{Eq: Distributed derivative for coll - I}, performing an affine mapping from $[\alpha_{min},\alpha_{max}]$ to the standard domain $[-1,1]$, and employing a proper quadrature rule in $\alpha$-domain, \eqref{Eq: Distributed derivative for coll - I} can be written as
\begin{align}
	\label{Eq: Distributed derivative for coll - II}
	&
	\sum_{j=2}^{N}
	\left[
	\sum_{q}^{Q} 
	w_q \, \phi(\alpha_q) \, (\frac{2}{T})^{\alpha_q} \, 
	\left( \textbf{D}^{\alpha_q}_{ij} + \textbf{D}^{1+\alpha_q}_{ij} \right)
	\right] u_N(\xi_j)
	= f(\xi_i),
	\\
	\nonumber
	&
	\sum_{j=2}^{N} \,\, 
	\mathscr{D}_{i,j} \,\, u_N(\xi_j)
	= f(\xi_i),  \quad i=2,3,\cdots,N.
\end{align}
\begin{rem}
	Multi-term problems can be generalized to the distributed order counterparts through the definition of distribution function $\phi(\alpha)$. For instance, if the operator consists of multiple fractional orders $\,\, 0 < \alpha_1 < \alpha_2 < \cdots < \alpha_P \leq 2$, the corresponding multi-term problem
	\begin{align*}
		\sum_{p=1}^{p=P} \,\, \prescript{}{0}{\mathcal{D}}_{t}^{\alpha_p} u(t) = f(t)
	\end{align*}
	can be represented as a distributed order problem of the form \eqref{Eq: Distributed derivative-I}, in which $\phi(\alpha) = \sum_{p=1}^{p=P} \,\, \delta(\alpha-\alpha_p)$. We note that in this case, the distributed fractional Sobolev space, $\prescript{\phi}{}{\mathcal H}(\mathbb{R})$, coincides with the fractional Sobolev space ,$H^{\alpha_P}(\mathbb{R})$. 
	The choice of collocation/interpolation points is the key to construct well-conditioned linear systems with optimal approximability. In the present work, we leave $\mu$ in expansion \eqref{Eq: Approx SolN Fractional Lagrange} as a free interpolation parameter to capture possible singularities and employ the zeros of Legendre polynomials as the interpolation collocation/interpolation points. 
	
	%
\end{rem}
%

%
\section{Numerical Simulations}
\label{Sec: Numerical Simulation} 
%

In order to examine the convergence of the schemes with \textit{modal} and \textit{nodal} expansions, we consider problems with smooth and non-smooth solutions.

%
\subsection{Smooth Solutions}
\label{Sec: Numerical Simulation - smoosth} 
%

Let $\alpha \in [0,2]$ and consider the following two cases:
\begin{itemize}
	\item	Case I: $u^{ext} = t^5$, \, $\phi(\alpha) = \Gamma(6-\alpha)/5!$  , \, $f(t) =\frac{(t^5-t^3)}{\log(t)}$
	
	\item 	Case II: $u^{ext} = t^3$, \, $\phi(\alpha) = \Gamma(4-\alpha) \sinh(\alpha)$, \, $f(t) = \frac{6 t ( t^2 - \cosh(2) -\sinh(2) \log(t) )}{(\log(t)^2 -1)}$.
\end{itemize}
By taking the simulation time $T=2$ and for different choices of $\mu$, we provide the convergence study in $L^{\infty}$-norm, $L^{2}$-norm, $H^{\mu_1}$-norm and $\prescript{\phi}{}{\mathcal H}$-norm using the PG scheme and in $L^{\infty}$-norm using the collocation scheme. It is observed that the choice of $\mu$ has an important effect on the convergence behaviour of the scheme. For instance, since the exact solution is a polynomial, as $\mu \rightarrow 1$, we recover the exponential convergence in capturing the exact solution. 



\begin{table}[h]
	\center
	\caption{\label{Table: PG Distributed-Order Results 1} Case-I; PG scheme convergence study in $L^{\infty}$-norm, $L^{2}$-norm, $H^{\mu_1}$-norm and $\prescript{\phi}{}{\mathcal H}$-norm, where $T=2$.}
	\vspace{-0.2 in}
	%
	%
	\scalebox{0.75}{
		\begin{tabular}
			%
			{r c c c c c}
			&& $\mu_1 = 1+10^{-4}$  &&  \\
			\hline
			\hline
			N & $L^{\infty}$-Error &  $L^{2}$-Error  &  $H^{\mu_1}$-Error & $\prescript{\phi}{}{\mathcal H}$-Error \\[4pt]
			\hline
			2 &  $9.49784$ &  $3.38063$ &  $20.604$ & $ 6.52507 $ \\
			\hline
			4 &  $0.163486$ &  $0.0823368$ &  $0.802757$ & $ 0.187176 $ \\
			\hline
			6 &  $9.71043\times 10^{-8}$ &  $6.7433 \times 10^{-8}$ &  $8.37613 \times 10^{-7}$ & $ 1.70551 \times 10^{-7} $ \\
			\hline
			8 &  $2.9053 \times 10^{-9}$ &  $2.32457 \times 10^{-9}$ &  $ 3.53574\times 10^{-8}$ & $ 6.59486 \times 10^{-9} $ \\
			\hline
			10 &  $2.27748 \times 10^{-10}$ &  $2.01002 \times 10^{-10}$ &  $3.67074 \times 10^{-9}$ & $ 6.38469 \times 10^{-10} $ \\
			\hline
			%
		\end{tabular}
	}
	\\
	\scalebox{0.8}{
		\begin{tabular}
			%
			{r c c c c c}
			&& $\mu_1 = 1.1$  &&  \\
			\hline
			\hline
			N & $L^{\infty}$-Error &  $L^{2}$-Error  &  $H^{\mu_1}$-Error & $\prescript{\phi}{}{\mathcal H}$-Error \\
			\hline
			2 &  $9.6776$ &  $ 3.2898$ &  $ 23.3004$ & $ 6.38693 $ \\
			\hline
			4 &  $0.160434$ &  $0.0661304$ &  $ 0.872809$ & $ 0.157957 $ \\
			\hline
			6 &  $0.0000947942$ &  $ 0.0000589784$ &  $0.00107458$ & $ 0.00015822 $ \\
			\hline
			8 &  $3.10668 \times 10^{-6}$ &  $2.19939 \times 10^{-6}$ &  $0.0000507737$ & $ 6.59429 \times 10^{-6} $ \\
			\hline
			10 &  $2.48519 \times 10^{-7}$ &  $1.9822 \times 10^{-7}$ &  $5.5753 \times 10^{-6}$ & $ 6.61409 \times 10^{-7} $ \\
			\hline
			%
		\end{tabular}
	}
	\\
	\scalebox{0.8}{
		\begin{tabular}
			%
			{r c c c c c}
			&& $\mu_1 = 1.5$  &&  \\
			\hline
			\hline
			N & $L^{\infty}$-Error &  $L^{2}$-Error  &  $H^{\mu_1}$-Error & $\prescript{\phi}{}{\mathcal H}$-Error \\
			\hline
			2 &  $9.8476$ &  $ 3.10681$ &  $ 35.8457$ & $ 5.96161 $ \\
			\hline
			4 &  $0.102534$ &  $0.0264974 $ &  $ 0.949183$ & $ 0.0718496 $ \\
			\hline
			6 &  $0.000584995$ &  $ 0.00015106$ &  $0.0117235 $ & $ 0.000524729 $ \\
			\hline
			8 &  $0.0000272655$ &  $7.37649 \times 10^{-6}$ &  $0.000989158 $ & $ 0.0000306404 $ \\
			\hline
			10 &  $2.75271 \times 10^{-6}$ &  $7.75346 \times 10^{-7}$ &  $0.000158823$ & $ 3.72512 \times 10^{-6} $ \\
			\hline									
			%
		\end{tabular}
	}
\end{table}
\begin{table}[h]
	\center
	\caption{\label{Table: PG Distributed-Order Results 2} Case-II; PG scheme convergence study in $L^{\infty}$-norm, $L^{2}$-norm, $H^{\mu_1}$-norm and $\prescript{\phi}{}{\mathcal H}$-norm, where $T=2$.}
	\vspace{-0.2 in}
	%
	%
	\scalebox{0.75}{
		\begin{tabular}
			%
			{r c c c c c}
			&& $\mu_1 = 1 + 10^{-4}$  &&  \\
			\hline
			\hline
			N & $L^{\infty}$-Error &  $L^{2}$-Error  &  $H^{\mu_1}$-Error & $\prescript{\phi}{}{\mathcal H}$-Error \\
			\hline
			2 &  $0.379134$ &  $0.325253$ &  $1.44392$ & $ 1.86897 $ \\
			\hline
			4 &  $6.80222 \times 10^{-7}$ &  $6.33141 \times 10^{-7}$ &  $4.61395 \times 10^{-6}$ & $5.29606 \times 10^{-6} $ \\
			\hline
			6 &  $5.22608\times 10^{-8}$ &  $4.52071 \times 10^{-8}$ &  $4.80236 \times 10^{-7}$ & $5.08899 \times 10^{-7} $ \\
			\hline
			8 &  $1.27547 \times 10^{-8}$ &  $9.98313 \times 10^{-9}$ &  $1.0532 \times 10^{-7}$ & $1.049 \times 10^{-7} $ \\
			\hline
			10 &  $7.31142 \times 10^{-9}$ &  $7.21402 \times 10^{-9}$ &  $3.44882 \times 10^{-8}$ & $3.39574 \times 10^{-8} $ \\
			\hline
			%
		\end{tabular}
	}
	\\
	\scalebox{0.8}{
		\begin{tabular}
			%
			{r c c c c c}
			&& $\mu_1 = 1.1$  &&  \\
			\hline
			\hline
			N & $L^{\infty}$-Error &  $L^{2}$-Error  &  $H^{\mu_1}$-Error & $\prescript{\phi}{}{\mathcal H}$-Error \\
			\hline
			2 &  $0.369682$ &  $0.263829$ &  $1.45384$ & $ 1.62458 $ \\
			\hline
			4 &  $0.000646557$ &  $0.000569995$ &  $ 0.00548608$ & $ 0.00499413 $ \\
			\hline
			6 &  $0.0000458334$ &  $0.0000438926$ &  $0.000636023$ & $ 0.000511403 $ \\
			\hline
			8 &  $7.74333 \times 10^{-6}$ &  $7.36329 \times 10^{-6}$ &  $0.000147177$ & $ 0.000107932 $ \\
			\hline
			10 &  $2.02013 \times 10^{-6}$ &  $1.84714 \times 10^{-6}$ &  $0.000048212$ & $ 0.0000327428 $ \\
			\hline
			%
		\end{tabular}
	}
	\\
	\scalebox{0.8}{
		\begin{tabular}
			%
			{r c c c c c}
			&& $\mu_1 = 1.5$  &&  \\
			\hline
			\hline
			N & $L^{\infty}$-Error &  $L^{2}$-Error  &  $H^{\mu_1}$-Error & $\prescript{\phi}{}{\mathcal H}$-Error \\
			\hline
			2 &  $0.288508$ &  $0.114871$ &  $1.25471$ & $ 0.848595 $ \\
			\hline
			4 &  $0.00403916$ &  $0.00163979$ &  $0.0511667$ & $ 0.0190804 $ \\
			\hline
			6 &  $0.000406095$ &  $0.000169817$ &  $0.0106909$ & $ 0.00268063 $ \\
			\hline
			8 &  $0.0000789352$ &  $0.0000336939$ &  $0.00358698$ & $ 0.000671243 $ \\
			\hline
			10 &  $0.0000219275$ &  $9.49574 \times 10^{-6}$ &  $0.00153771$ & $ 0.000228446 $ \\
			\hline
			%
		\end{tabular}
	}

\end{table}
\begin{table}[h]
	\center
	\caption{\label{Table: col. Distributed-Order Results 1} Case-I and II; collocation scheme convergence study in $L^{\infty}$-norm, where $T=2$.}
	\vspace{-0.1in}
	\scalebox{0.7}{
		\begin{tabular}
			{ l  c c  c c  c c }
			\hline
			& \multicolumn{2}{c}{$\mu = 1 - 10^{-10}$} & \multicolumn{2}{c}{$\mu = 7/10$} & \multicolumn{2}{c}{$\mu = 1/10$}  \\
			&&&&&& \\[-2.5ex]
			\hline
			N & Case-I & Case-II  & Case-I & Case-II & Case-I & Case-II   \\
			\hline			
			\hline
			2 &  $ 2.59\times 10^{+1}$ & $ 5.74$   &  $ 3.0 \times 10^{+1}$ &  $ 8.84$   &  $ 4.3\times 10^{+1}$ &   $ 19.91$  \\
			\hline
			4 &  $6.81 \times 10^{-1}$ & $ 5.30\times 10^{-12}$   &  $ 1.10\times 10^{+1}$ &  $ 2.58\times 10^{-1}$  &  $ 2.51\times 10^{+1}$ & $ 1.01\times 10^{-1}$  \\
			\hline
			6 &  $ 3.87\times 10^{-13}$ &  $2.15 \times 10^{-13}$  &  $ 1.43\times 10^{-3}$ &  $ 1.52\times 10^{-3}$  &  $ 3.48\times 10^{-3}$ & $ 8.03\times 10^{-3}$  \\
			\hline
			8 &  $1.10 \times 10^{-14}$ & $ 2.68\times 10^{-14}$  &  $ 3.10\times 10^{-5}$ &  $ 3.34\times 10^{-4}$  &  $ 8.38\times 10^{-5}$ & $ 1.83\times 10^{-3}$  \\
			\hline
			10 &  $ 8.75\times 10^{-15}$ &  $7.01 \times 10^{-15}$  &  $ 2.12\times 10^{-6}$ &  $1.12 \times 10^{-4}$  &  $ 1.0\times 10^{-5}$ & $ 6.25\times 10^{-4}$  \\
			\hline
			%
		\end{tabular}
	}
\end{table}

\newpage

Tables \ref{Table: PG Distributed-Order Results 1} and \ref{Table: PG Distributed-Order Results 2} show the convergence behaviour of the simulation results based on the PG scheme for the two case-I and II respectively. Tables \ref{Table: col. Distributed-Order Results 1} shows the convergence behaviour of the simulation results based on the collocation scheme for the two cases I and II. Table \ref{Table: col. Distributed-Order Results 2} shows the convergence behaviour of the simulation results based on the collocation scheme for the case where the exact solution is the same as case-I but the distribution function is $\phi(\alpha) = \sum_{p=1}^{4} \, \delta(\alpha-\alpha_p)$ with the fractional orders $\lbrace 1/10,1/2,13/10,19/10 \rbrace$ and the forcing function is $f(t) = \sum_{p=1}^{4} \,\, \frac{120}{\Gamma(6-\alpha_p)} \,\, t^{5-\alpha_p}$.

\begin{table}[h]
	\center
	\caption{\label{Table: col. Distributed-Order Results 2} Multi-term case; collocation scheme convergence study in $L^{\infty}$-norm, where $T=2$.}
	\vspace{-0.0 in}
	\scalebox{0.75}{
		\begin{tabular}
			{ l  c  c  c }
			\hline
			N & $\mu = 1 - 10^{-10}$ & {$\mu = 7/10$} & $\mu = 1/10$  \\
			&&& \\[-2.5ex]
			\hline			
			\hline
			6 &  $ 2.99596 \times 10^{-4}$ & $ 1.93088 \times 10^{3}$   &  $ 7.10859 \times 10^{-2}$ \\
			\hline
			10 &  $ 4.40056 \times 10^{-7}$ & $ 7.90916 \times 10^{-6}$   &  $ 1.95735 \times 10^{-4}$ \\
			\hline
			14 &  $ 9.35031\times 10^{-9}$ & $ 3.39228 \times 10^{-7}$   &  $ 7.99603 \times 10^{-6}$ \\
			\hline
			18 &  $ 9.15918\times 10^{-10}$ & $ 3.53369 \times 10^{-8}$   &  $ 8.27226 \times 10^{-7}$ \\
			\hline
			%
		\end{tabular}
	}
\end{table}
%


%
\subsection{Non-Smooth Solutions}
\label{Sec: Numerical Simulation - singular} 
%

Since the exact solution is not always known and in contrast to the standard fractional ODEs where the forcing term gives some regularity  information about the exact solution, in distributed order problems such a prediction is rather difficult to make. Hence, the fractional parameter $\mu$ can play the role of a \textit{fine-tuning knob} giving the possibility of searching for the best/optimal case, where the highest rate can be achieved with minimal degrees of freedom.
Here, we let $\alpha \in [0,1]$ and consider the following two cases of singular solution, where by the proper choice of $\mu$ we can easily capture the singularity of the solution.
\begin{itemize}
	\item 	Case III: $u^{ext} = t^{\mu_0}$, \, $\phi(\alpha) = \Gamma(1+\mu_0-\alpha)/(\mu_0)!$, \, $\mu_0 = 1/10, 9/10$,
	
	\item 	Case IV: $u^{ext} = t^{\mu_0} \, sin(t)$, \, $\phi(\alpha)$, \, $\mu_0 = 75/100 , 25/100$.
\end{itemize}
In case-III, we are able to obtain the exact solution only with one term by choosing $\mu = \mu_0$. In case-IV, we take $\mu = \mu _0$ and expand $sin(t)$ using Taylor series. Table \ref{Table: PG Distributed-Order Results nonsmooth} shows the $L^{2}$-norm convergence of the PG scheme for two different distribution functions.
\begin{table}[h]
	\center
	\caption{\label{Table: PG Distributed-Order Results nonsmooth} Case-IV; PG scheme convergence study in $L^{2}$-norm, where $T=2$.}
	\vspace{-0.1in}
	\scalebox{0.7}{
		\begin{tabular}
			{ l  c c c c}
			\hline
			& \multicolumn{2}{c}{$\mu = 75/100 $} & \multicolumn{2}{c}{$\mu = 25/100$}   \\
			&&&& \\[-2.5ex]
			\hline
			N & $\phi(\alpha) = 1$ & $\phi(\alpha) = $ Normal & $\phi(\alpha) = 1$ & $\phi(\alpha) = $ Normal \\
			\hline
			\hline
			2 &  $ 1.56682 \times 10^{-1}  $  & $ 1.62765 \times 10^{-1} $ & $ 1.5773 \times 10^{-1} $ & $ 1.548 \times 10^{-1} $ \\
			\hline
			4 &  $ 3.13043 \times 10^{-3} $  & $ 3.3898 \times 10^{-3} $ & $ 3.4228 \times 10^{-3} $ & $ 3.28626 \times 10^{-3} $ \\
			\hline
			6 &  $ 2.55359 \times 10^{-5} $  & $ 2.81522 \times 10^{-5} $ & $ 2.8956 \times 10^{-5} $ & $ 2.76729 \times 10^{-5} $ \\
			\hline
			8 &  $ 1.13562 \times 10^{-7} $  & $ 1.2512 \times 10^{-7} $ & $ 4.24126 \times 10^{-7} $ & $ 1.40114 \times 10^{-7} $ \\
			\hline
			10 &  $ 2.60471 \times 10^{-9} $  & $ 7.84647 \times 10^{-10} $ & $ 3.9524 \times 10^{-7} $ & $ 5.49882 \times 10^{-8} $ \\
			\hline
			%
		\end{tabular}
	}
\end{table}
%

%
\subsection{Condition Number}
\label{Sec: condition number} 
%
The condition number of the constructed linear system is obtained for different distribution functions, $\phi(\alpha)$. Tables \ref{Table: condition number PG} and \ref{Table: condition number Col.} show, respectively, the condition number of the constructed linear system for case-I and II based on PG and collocation scheme for the aforementioned distribution functions. 

\begin{table}[h]
	\center
	\caption{\label{Table: condition number PG} Case-I and II; PG scheme condition number of the constructed linear system, where $T= 2$.}
	\vspace{-0.1 in}
	\scalebox{0.7}{
		\begin{tabular}
			{ l  c c  c c  c c  c c }
			\hline
			& \multicolumn{2}{c}{$\mu = 2 - 10^{-8}$} & \multicolumn{2}{c}{$\mu = 2 - 10^{-1}$} & \multicolumn{2}{c}{$\mu = 1 + 1/2$} & \multicolumn{2}{c}{$\mu = 1 + 1/10$} \\
			\hline
			&&&&&&&& \\[-2.5ex]
			N & Case-I & Case-II  & Case-I & Case-II & Case-I & Case-II & Case-I & Case-II  \\
			\hline
			\hline
			6 &  $ 29706.682 $ & $ 4863.50 $  &  $ 14319.465 $  & $ 2168.87 $ &  $ 661.70145 $ & $ 70.1081 $ &  $ 51.928935 $ & $ 7.19267 $  \\
			\hline
			10 &  $ 240000.55 $ & $ 33494.8 $ &  $ 90197.388 $ & $ 11817.9 $ &  $ 1339.2855 $ & $ 130.925 $ &  $ 42.754111 $ & $ 9.35597 $  \\
			\hline
			14 &  $ 882010.62 $ & $ 118283 $ &  $ 279501.78 $ & $ 35395.6 $ &  $ 1941.9838 $ & $ 190.309 $ &  $ 47.335770 $ & $ 13.0337 $  \\
			\hline
			18 &  $ 2.2811229 \times 10^{6} $ & $ 301479 $ &  $ 633307.06 $ & $ 79324.0 $ &  $ 2505.5107 $ & $ 247.627 $ &  $ 50.713428 $ & $ 16.7944 $  \\
			\hline
			%
		\end{tabular}
	}
\end{table}
\begin{table}[h]
	\center
	\caption{\label{Table: condition number Col.} Case-I and II; collocation scheme condition number of the constructed linear system, where $T= 2$.}
	\vspace{-0.1 in}
	\scalebox{0.7}{
		\begin{tabular}
			{ l  c c  c c  c c  c c }
			\hline
			& \multicolumn{2}{c}{$\mu = 1- 10^{-8}$} & \multicolumn{2}{c}{$\mu = 1- 10^{-1}$} & \multicolumn{2}{c}{$\mu = 1/2$} & \multicolumn{2}{c}{$\mu = 1/10$} \\
			\hline
			&&&&&&&& \\[-2.5ex]
			N & Case-I & Case-II  & Case-I & Case-II & Case-I & Case-II & Case-I & Case-II  \\
			\hline
			\hline
			6 &  $ 67.5606 $ & $ 345.045 $  &  $ 60.3467 $  & $ 302.74 $ &  $ 43.6649 $ & $ 191.058 $ &  $ 36.0056 $ & $ 340.539 $  \\
			\hline
			10 &  $ 386.339 $ & $ 2781.51 $ &  $ 325.037 $ & $ 2309.25 $ &  $ 214.935 $ & $ 1515.11 $ &  $ 202.826 $ & $ 3554.95 $  \\
			\hline
			14 &  $ 1330.11 $ & $ 10646.1 $ &  $ 1076.14 $ & $ 8518.56 $ &  $ 685.166 $ & $ 5435.91 $ &  $ 713.002 $ & $ 16539.5 $  \\
			\hline
			18 &  $ 3388.95 $ & $ 28619.5 $ &  $ 2665.32 $ & $ 22290.4 $ &  $ 1661.16 $ & $ 13964.9 $ &  $ 3397.2 $ & $ 50911.1 $  \\
			\hline
			%
		\end{tabular}
	}
\end{table}

Moreover, three cases for the distribution function are considered: 1) the distribution is more biased toward the left of domain, 2) the distribution is symmetric  and 3) the distribution is more biased toward the right of domain, namely \textit{left biased}, \textit{symmetric} and \textit{right biased} respectively. The distribution functions are well-known  normal, exponential, log-normal, Cauchy, Laplace, Beta and Maxwell distributions, however, they are truncated and normalized, see Fig.\ref{Fig: Distribution functions}. For these distributions, the condition number of the constructed linear system based on the two methods is computed and provided in Tables \ref{Table: condition number left biased distribution}, \ref{Table: condition number sym distribution} and \ref{Table: condition number right biased distribution}.

\begin{figure}[h]
	\centering
	\begin{subfigure}{0.32\textwidth}
		\centering
		\includegraphics[width=1\linewidth]{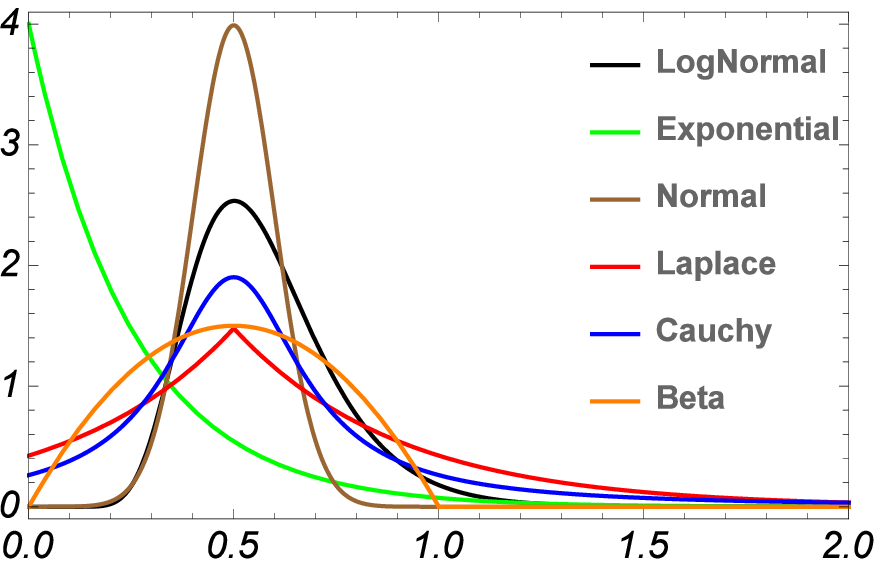}
		\caption{}
		\label{fig:sub1}
	\end{subfigure}
	\begin{subfigure}{0.32\textwidth}
		\centering
		\includegraphics[width=1\linewidth]{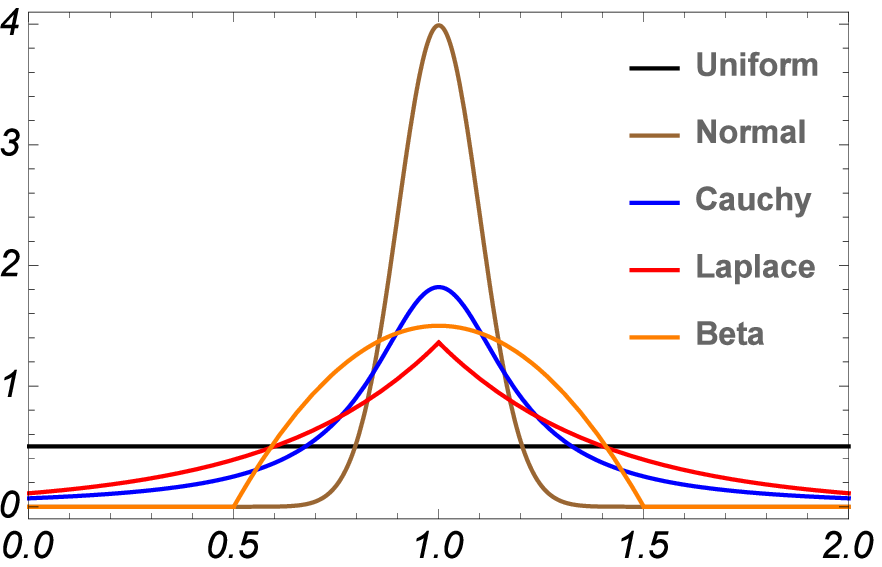}
		\caption{}
		\label{fig:sub2}
	\end{subfigure}
	\begin{subfigure}{0.32\textwidth}
		\centering
		\includegraphics[width=1\linewidth]{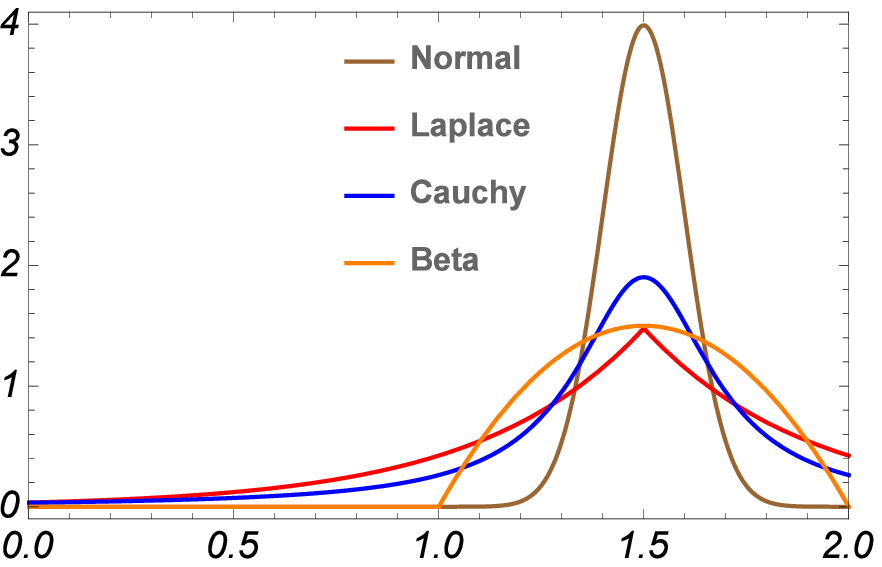}
		\caption{}
		\label{fig:sub3}
	\end{subfigure}
	\caption{\small{Distribution functions: (a) \textit{Left biased} (b) \textit{Symmetric} (c) \textit{Right biased} }}
	\label{Fig: Distribution functions}
\end{figure}
\begin{table}[h]
	\center
	\caption{\label{Table: condition number left biased distribution} Left biased distribution function; PG (top) and collocation (bottom) scheme condition number of the constructed linear system, where $T= 2$.}
	\vspace{-0.1 in}
	\scalebox{0.7}{
		\begin{tabular}
			{ l  c c  c c  c c }
			\hline
			&&&&&& \\[-2.5ex]
			\textbf{N} & \textbf{LogNormal} &  \textbf{Exponential} & \textbf{Normal} & \textbf{Laplace} & \textbf{Cauchy} & \textbf{Beta}  \\
			\hline
			\hline
			\multicolumn{7}{c}{$\mu = 2 - 1/10$} \\
			\hline
			6 &  $ 62101.5 $ & $ 130227 $  &  $ 85410.4 $  & $ 21714.7 $ &  $ 31361 $ & $ 70458.4 $   \\
			\hline
			10 &  $ 1.28119 \times 10^{6} $ & $ 2.62266 \times 10^{6} $ &  $ 2.15167 \times 10^{6} $ & $ 186527 $ &  $ 294630  $ & $ 1.51681 \times 10^{6} $   \\
			\hline
			14 &  $ 9.84911 \times 10^{6} $ & $ 1.61563 \times 10^{7} $ &  $ 1.98724 \times 10^{7} $ & $ 668178  $ &  $ 1.04066 \times 10^{6}  $ & $ 1.22216 \times 10^{6} $   \\
			\hline
			18 &  $ 4.48721 \times 10^{7} $ & $ 5.34428 \times 10^{7} $ &  $ 1.0748 \times 10^{8} $ & $ 1.62018 \times 10^{6}  $ &  $ 2.41399 \times 10^{6} $ & $ 5.86944 \times 10^{6} $   \\
			\hline
			\hline
			\multicolumn{7}{c}{$\mu = 1 + 1/10$} \\
			\hline
			6 &  $ 200.626 $ & $ 505.679 $  &  $ 300.643  $  & $ 71.5348 $ &  $ 100.504 $ & $ 233.849 $   \\
			\hline
			10 &  $ 654.259 $ & $ 1397.74 $ &  $ 1309.12 $ & $ 91.8644 $ &  $  140.467 $ & $ 816.254 $   \\
			\hline
			14 &  $ 1322.63 $ & $ 1969.15 $ &  $ 3437.72 $ & $ 64.4093  $ &  $ 98.4193  $ & $ 1780.96  $   \\
			\hline
			18 &  $ 2145.61 $ & $ 2178 $ &  $ 7037.99 $ & $ 70.7541  $ &  $  91.8767 $ & $ 3134.64  $   \\
			\hline			
			\vspace{0.1 in}
		\end{tabular}
	}
	\scalebox{0.7}{
		\begin{tabular}
			{ l@{\hspace*{.350in}} c@{\hspace*{.350in}} c@{\hspace*{.350in}}  c@{\hspace*{.350in}} c@{\hspace*{.350in}}  c@{\hspace*{.350in}} c@{\hspace*{.350in}} }
			\hline
			&&&&&& \\[-2.5ex]
			\textbf{N} & \textbf{LogNormal} &  \textbf{Exponential} & \textbf{Normal} & \textbf{Laplace} & \textbf{Cauchy} & \textbf{Beta}  \\
			\hline
			\hline
			\multicolumn{7}{c}{$\mu = 1 - 1/10$} \\
			\hline
			6 &  $ 20.1001 $ & $ 9.42989 $  &  $ 13.3574 $  & $ 51.9103 $ &  $ 39.5064 $ & $ 16.8765 $   \\
			\hline
			10 &  $ 48.3364 $ & $ 25.6229 $ &  $ 26.2852 $ & $ 237.89 $ &  $ 169.114 $ & $ 38.3283 $   \\
			\hline
			14 &  $ 91.0866 $ & $ 55.5712 $ &  $ 46.5097 $ & $ 714.563  $ &  $ 503.229 $ & $ 65.4666  $   \\
			\hline
			18 &  $ 143.171 $ & $ 126.19 $ &  $ 73.3388 $ & $ 1672.61  $ &  $ 1185.15  $ & $ 97.5852  $   \\
			\hline
			\hline
			\multicolumn{7}{c}{$\mu =  1/10$} \\
			\hline
			6 &  $ 15.7319 $ & $ 5.67323 $  &  $ 11.4277 $  & $ 33.4341 $ &  $ 26.3924 $ & $ 13.3573 $   \\
			\hline
			10 &  $ 40.2359 $ & $ 18.0958 $ &  $ 22.8619 $ & $ 150.375 $ &  $ 102.285  $ & $ 30.7949 $   \\
			\hline
			14 &  $ 71.0054 $ & $ 37.1664 $ &  $ 35.1324 $ & $ 435.867 $ &  $ 309.089  $ & $ 52.9303  $   \\
			\hline
			18 &  $ 110.725 $ & $ 80.9506 $ &  $ 48.0051 $ & $ 1032.88  $ &  $ 742.025  $ & $ 81.7202  $   \\
			\hline						
			%
		\end{tabular}
	}
\end{table}
\begin{table}[h]
	\center
	\caption{\label{Table: condition number sym distribution} Symmetric distribution function; PG (left) and collocation (right) scheme condition number of the constructed linear system, where $T= 2$.}
	\vspace{-0.1 in}
	\scalebox{0.6}{
		\begin{tabular}
			{ l  c c  c c  c  }
			\hline
			&&&&& \\[-2.5ex]
			\textbf{N} & \textbf{Uniform} &  \textbf{Normal} & \textbf{Cauchy} & \textbf{Laplace} & \textbf{Beta}  \\
			\hline
			\hline
			\multicolumn{6}{c}{$\mu = 2 - 1/10$} \\
			\hline
			6 &  $ 3104.6331 $ & $ 16261.6 $  &  $ 9328.53 $  & $ 6969.22 $ &  $ 13404.7 $   \\
			\hline
			10 &  $ 17244.219 $ & $ 220205 $ &  $ 80110.2 $ & $ 51304.3 $ &  $ 157767  $   \\
			\hline
			14 &  $ 52095.460 $ & $ 1.28827 \times 10^{6} $ &  $ 308549 $ & $ 179737  $ &  $ 817268  $   \\
			\hline
			18 &  $ 117338.89  $ & $ 4.86517 \times 10^{6} $ &  $ 803362 $ & $ 441628 $ &  $ 2.77821 \times 10^{6}  $   \\
			\hline
			\hline
			\multicolumn{6}{c}{$\mu = 1 + 1/10$} \\
			\hline
			6 &  $ 9.5451677 $ & $ 44.0857 $  &  $ 27.6858 $  & $ 17.9872 $ &  $ 37.4224 $   \\
			\hline
			10 &  $ 11.211269 $ & $ 85.3822 $ &  $ 30.0092 $ & $ 21.361 $ &  $ 64.2331  $   \\
			\hline
			14 &  $ 12.360897 $ & $ 127.844 $ &  $ 30.9343 $ & $ 20.1997  $ &  $ 87.2794  $   \\
			\hline
			18 &  $ 15.900925 $ & $ 172.888 $ &  $ 34.2126 $ & $ 22.1982  $ &  $  107.403 $   \\
			\hline						
			%
		\end{tabular}
	}
	\scalebox{0.6}{
		\begin{tabular}
			%
			{ l  c c  c c  c }
			\hline
			&&&&& \\[-2.5ex]
			\textbf{N} & \textbf{Uniform} &  \textbf{Normal} & \textbf{Cauchy} & \textbf{Laplace} & \textbf{Beta}  \\
			\hline
			\hline
			\multicolumn{6}{c}{$\mu = 1 - 1/10$} \\
			\hline
			6 &  $ 219.733 $ & $ 81.9543 $  &  $ 118.433 $  & $ 140.922 $ &  $ 95.0983 $  \\
			\hline
			10 &  $ 1592.05 $ & $ 284.171 $ &  $ 598.247 $ & $ 798.174 $ &  $ 375.673  $  \\
			\hline
			14 &  $ 5769.24 $ & $ 620.056 $ &  $ 1788.85 $ & $ 2549.17  $ &  $ 905.972  $  \\
			\hline
			18 &  $ 14944.9 $ & $ 1097.37 $ &  $ 4122.7 $ & $ 6115.65  $ &  $ 1737.18  $  \\
			\hline
			\hline
			\multicolumn{6}{c}{$\mu = 1/10$} \\
			\hline
			6 &  $ 183.984 $ & $ 55.8908 $  &  $ 72.0876 $  & $ 78.4252 $ &  $ 62.069 $  \\
			\hline
			10 &  $ 1854.46 $ & $ 198.226 $ &  $ 360.943 $ & $ 506.994 $ &  $ 248.186  $  \\
			\hline
			14 &  $ 8678.37 $ & $ 439.316 $ &  $ 1113.83 $ & $ 1595.84 $ &  $ 589.611  $  \\
			\hline
			18 &  $ 26873.1 $ & $ 786.655 $ &  $ 2600.56 $ & $ 3954.72  $ &  $ 1171.64 $  \\
			\hline						
			%
		\end{tabular}
	}
\end{table}
\begin{table}[h]
	\center
	\caption{\label{Table: condition number right biased distribution} Right biased distribution function; PG (left) and collocation (right) scheme condition number of the constructed linear system, where $T= 2$.}
	\vspace{-0.1 in}
	\scalebox{0.7}{
		\begin{tabular}
			%
			{ l  c c  c c }
			\hline
			&&&& \\[-2.5ex]
			\textbf{N} & \textbf{Normal} & \textbf{Laplace} & \textbf{Cauchy} & \textbf{Beta}  \\
			\hline
			\hline
			\multicolumn{5}{c}{$\mu = 2 - 1/10$} \\
			\hline
			6 &  $ 2985.08 $ & $ 2274.39 $  &  $ 2368.82 $  & $ 2105.03 $   \\
			\hline
			10 &  $ 23439.3 $ & $  13948.9 $ &  $ 15471.9 $ & $ 13474.7 $   \\
			\hline
			14 &  $ 92925.7 $ & $ 45201.4 $ &  $ 52626.9 $ & $ 45325.7  $    \\
			\hline
			18 &  $ 259993 $ & $ 107082 $ &  $ 129454 $ & $ 110813 $   \\
			\hline
			\hline
			\multicolumn{5}{c}{$\mu = 1 + 1/10$} \\			
			\hline
			6 &  $ 6.99801 $ & $ 6.60933 $  &  $ 6.54308 $  & $ 6.18762 $   \\
			\hline
			10 &  $ 10.8049 $ & $ 9.81523 $ &  $ 10.0861 $ & $ 10.1067 $     \\
			\hline
			14 &  $ 15.1563 $ & $ 14.2958 $ &  $ 14.0265 $ & $ 14.6423  $   \\
			\hline
			18 &  $ 19.4415 $ & $ 18.4003 $ &  $ 18.8445 $ & $ 18.8791  $  \\
			\hline
			%
		\end{tabular}
	}
	\scalebox{0.7}{
		\begin{tabular}
			%
			{ l  c c  c c}
			\hline
			&&&& \\[-2.5ex]
			\textbf{N} & \textbf{Normal} & \textbf{Laplace} & \textbf{Cauchy} & \textbf{Beta}  \\
			\hline
			\hline
			\multicolumn{5}{c}{$\mu = 1 - 1/10$} \\
			\hline
			6 &  $ 329.01 $ & $ 328.433 $  &  $ 333.499 $  & $ 360.053 $    \\
			\hline
			10 &  $ 2022.09 $ & $ 2330.68 $ &  $ 2275.43 $ & $ 2501.41 $    \\
			\hline
			14 &  $ 6299.73 $ & $ 8170.64 $ &  $ 7736.87 $ & $ 8579.48  $    \\
			\hline
			18 &  $ 14429.7 $ & $ 20615.9 $ &  $ 19067.6 $ & $ 21250  $   \\
			\hline
			\hline
			\multicolumn{5}{c}{$\mu =  1/10$} \\			
			\hline
			6 &  $ 278.525 $ & $ 333.384 $  &  $ 322.187 $  & $ 378.704 $    \\
			\hline
			10 &  $ 1539.25 $ & $ 2647.09 $ &  $ 2246.71 $ & $ 2727.03 $    \\
			\hline
			14 &  $ 4701.38 $ & $ 10365.3 $ &  $ 8041.12 $ & $ 9884.8 $    \\
			\hline
			18 &  $ 10622.6 $ & $ 28764.7 $ &  $ 20895.4 $ & $ 25800.8  $   \\
			\hline			
			%
		\end{tabular}
	}
\end{table}
%


\newpage

%
\section{Summary and Discussion}
\label{Sec: Summary and Conclusion} 
%

We developed two spectrally-accurate schemes, namely the Petrov-Galerkin spectral method and the fractional spectral collocation method for distributed order fractional differential equations. The two schemes were constructed based on the recently developed spectral theory for fractional Sturm-Liouville problems (FSLPs). In the Petrov-Galerkin method, we employed the Jacobi poly-fractonomials as the bases, which are the eigenfunctions of FSLP-I, and the poly-fractonomial eigenfunctions of FSLP-II as the test functions. We carried out the discrete stability analysis of the proposed scheme employing some equivalent/bilinear-induced norms based on the defined distributed Sobolev spaces and their associated norms. In addition, we performed a convergence study of the proposed scheme. In the collocation method, we employed fractional Lagrange interpolants satisfying the Kronecker delta property at the collocation points, and then we obtained the corresponding distributed differentiation matrices to discretize the strong problem.

The existing schemes in the literature are mostly employing finite difference methods. The main challenge in these methods, in comparison to spectral methods, is the history calculation as well as extensive memory allocation while they deliver fixed algebraic accuracies. The recent spectral theory on fractional Sturm-Liouville problems (FSLPs) in \cite{Zayernouri2013} naturally motivates the use of Petrov-Galerkin spectral methods, where the arising bilinear forms are comprised of left- and right-sided fractional derivatives. The eigen-functions of FSLPs can be employed naturally as the bases and test spaces, where their left- and right-sided derivatives are obtained analytically. These functions consist of a polynomial part and a fractional part, where the former leaves the fractional order, $\mu$, as a free parameter to capture solution singularities, hence, to tune up the accuracy of the scheme from being algebraically convergent to exponential convergent. In fact, the Case-III of numerical examples demonstrated how a proper choice of fractional part of the bases provides the exact solution with only one term expansion. Furthermore, we proved that the distributed bilinear form can be approximated with a spectral/exponential accuracy using a proper quadrature rule. The PG spectral method treats the nonlocal effects efficiently through a global spectral method and provides a nice mathematical framework for performing theoretical studies, however, treating nonlinear problems remains a challenge. To this end, we constructed a spectrally accurate fractional spectral collocation method employing fractional Lagrange interpolants, where for linear problems the two developed schemes become equivalent in terms of the rate of convergence. 

The distribution function, $\phi(\alpha)$, defined the distribution of the differentiation fractional-order, $\alpha$, and it could arbitrarily confine the domain over which the fractional differentiation is taken. If $\phi$ was integrable in a compact support in $[\alpha_{min},\alpha_{max}]$, then $H^{\alpha_{min}}(\mathbb{R}) \supseteq \prescript{\phi}{}{\mathcal H}(\mathbb{R}) \supseteq H^{\alpha_{max}}(\mathbb{R})$. Hence, $\phi$ could play a crucial rule in defining the underlying solution space properly. In anomalous physical processes, the distribution function can be obtained from experimental data, where the inherent data uncertainty can be incorporated through the $\phi$ obtained from the observed data, hence, leading to a robust data-driven simulation framework for multi-physics problems.

\appendix
%
\section{Proof Of Lemma \eqref{Lem: Frac derive of Poly-fractonomials}}
\label{Sec: App. proof frac derive of Poly-fractonomials} 
%
\begin{proof}
	Following \cite{Askey1969} and for $\sigma>0$, $\alpha >-1$, $\beta >-1$, and $\forall x \in [-1,1]$ we have
	\begin{equation}
		\label{Eq: Askey1_Original}
		(1+x)^{\beta + \sigma}\, \frac{P_n^{\alpha-\sigma,\beta+\sigma}(x)}{P_n^{\alpha-\sigma,\beta+\sigma}(-1)} =
		\frac{\Gamma(\beta + \sigma +1)   }{ \Gamma(\beta +1)\Gamma(\sigma) P_n^{\alpha,\beta}(-1)}
		\int_{-1}^{x} 
		\frac{(1+s)^{\beta}\,\,P_n^{\alpha,\beta}(s)   }{(x - s)^{1-\sigma}}ds,
	\end{equation}
	and 
	\begin{equation}
		\label{Eq: Askey2_Original}
		(1-x)^{\alpha + \sigma}\, \frac{P_n^{\alpha+\sigma,\beta -\sigma}(x)}{P_n^{\alpha +\sigma,\beta-\sigma}(+1)} =
		\frac{\Gamma(\alpha + \sigma +1)   }{ \Gamma(\alpha +1)\Gamma(\sigma) P_n^{\alpha,\beta}(+1)}
		\int_{x}^{1} 
		\frac{(1-s)^{\alpha}\,\,P_n^{\alpha,\beta}(s)   }{(s - x)^{1-\sigma}}ds.
	\end{equation}
	%
	By the definition of the left-sided Riemann-Liouville integral $\prescript{RL}{-1}{\mathcal{I}}_{x}^{\sigma} $ and evaluating the special end-values $P_n^{\alpha-\sigma,\beta+\sigma}(-1)$ and $P_n^{\alpha,\beta}(-1)$, we can re-write (\ref{Eq: Askey1_Original}) as 
	\begin{equation}
		\nonumber
		\prescript{RL}{-1}{\mathcal{I}}_{x}^{\sigma} \Big\lbrace (1+x)^{\beta} P_n^{\alpha,\beta}(x)  \Big\rbrace = \frac{\Gamma(n +\beta  + 1)}{\Gamma(n + \beta + \sigma + 1)} (1+x)^{\beta + \sigma}\,P_n^{\alpha-\sigma,\beta+\sigma}(x),
	\end{equation}
	where, by taking the fractional derivative $\prescript{RL}{-1}{\mathcal{D}}_{x}^{\sigma}$ on the both sides, 
	we obtain
	\begin{equation}
		\label{Eq: Askey1_Fractional_Derivative}
		\prescript{RL}{-1} {\mathcal{D}}_{x}^{\sigma} \Big\lbrace
		(1+x)^{\beta + \sigma}
		P_n^{\alpha-\sigma, \beta + \sigma} (x) \Big\rbrace  = \frac{\Gamma(n +\beta + \sigma + 1)}{\Gamma(n +\beta +1)}  (1+x)^{\beta} P^{ \alpha , \beta}_n(x).  
	\end{equation}
	Hence, taking $\beta + \sigma = \mu$, $\alpha - \sigma = -\mu$ in \eqref{Eq: Askey1_Fractional_Derivative}, and shifting from $n$ to $n-1$, we obtain
	\begin{eqnarray}
		\label{Eq: 1 Presenving structure}
		\prescript{RL}{-1} {\mathcal{D}}_{x}^{\sigma} 
		\Big\lbrace
		\prescript{(1)}{}{ \mathcal{P}}_{n}^{\mu}(x)
		\Big\rbrace  &=& \frac{\Gamma(n +\mu )}{\Gamma(n +\mu - \sigma )}  (1+x)^{\mu - \sigma} P^{ \sigma - \mu , \mu - \sigma}_{n-1}(x),
		\\ \nonumber
		&=&
		\frac{\Gamma(n +\mu )}{\Gamma(n +\mu - \sigma )}  (1+x)^{\eta} P^{ -\eta , \eta}_{n-1}(x),
		\\ \nonumber
		&=&
		\frac{\Gamma(n +\mu )}{\Gamma(n +\mu - \sigma)}  \prescript{(1)}{}{ \mathcal{P}}_{n}^{\eta}(x),
	\end{eqnarray}
	where $\eta = \mu - \sigma$. Moreover, by the definition of the right-sided Riemann-Liouville integral $\prescript{RL}{x}{\mathcal{I}}_{1}^{\sigma}$ and evaluating the special end-values $P_n^{\alpha-\sigma,\beta+\sigma}(+1)$ and $P_n^{\alpha,\beta}(+1)$, we can re-write (\ref{Eq: Askey2_Original}) as 
	\begin{equation}
		\nonumber
		\prescript{RL}{x}{\mathcal{I}}_{1}^{\sigma} \Big\lbrace (1-x)^{\alpha} P_n^{\alpha,\beta}(x)  \Big\rbrace = \frac{\Gamma(n + \alpha + 1)}{\Gamma(n + \alpha + \sigma + 1)} (1-x)^{\alpha+\sigma}  P_n^{\alpha+\sigma,\beta-\sigma}(x). 
	\end{equation}
	In a similar fashion, by taking the fractional derivative $\prescript{RL}{x}{\mathcal{D}}_{-1}^{\sigma}$ on the both sides,
	we obtain
	\begin{equation}
		\label{Eq: Askey2_Fractional_Derivative}
		\prescript{RL}{x}{\mathcal{D}}_{1}^{\sigma} \Big\lbrace
		(1-x)^{\alpha + \sigma}
		P_n^{\alpha + \sigma , \beta-\sigma} (x) \Big\rbrace = \frac{\Gamma(n +\alpha + \sigma + 1)}{\Gamma(n +\alpha +1)}  (1 - x)^{\alpha} P^{\alpha , \beta}_n(x).  
	\end{equation}
	Next, by taking $\alpha + \sigma = \mu$, $\beta - \sigma = -\mu$ in \eqref{Eq: Askey2_Fractional_Derivative}, and again shifting from $n$ to $n-1$ we have
	\begin{eqnarray}
		\label{Eq: 2 Presenving structure}
		\prescript{RL}{x}{\mathcal{D}}_{1}^{\sigma} \Big\lbrace
		\prescript{(2)}{}{ \mathcal{P}}_{n}^{\mu}(x)
		\Big\rbrace &=& \frac{\Gamma(n +\mu )}{\Gamma(n +\mu - \sigma)}  (1 - x)^{\mu-\sigma} P^{\mu - \sigma , \sigma - \mu}_{n-1}(x).  
		\\ \nonumber
		&=&
		\frac{\Gamma(n +\mu )}{\Gamma(n +\mu - \sigma )}  (1-x)^{\eta} P^{ \eta , -\eta}_{n-1}(x),
		\\ \nonumber
		&=&
		\frac{\Gamma(n +\mu )}{\Gamma(n +\mu - \sigma)}  \prescript{(2)}{}{ \mathcal{P}}_{n}^{\eta}(x),
	\end{eqnarray}
	and that completes the proof.
\end{proof}
%

%
\section{Proof Of Theorem \eqref{Thm: dis. norm equivalency}}
\label{Sec: App. proof dis. norm equivalency} 
%
\begin{proof}
	Let $\phi$ be bounded in $(\alpha_{min} , \alpha_{max} )$. Then,
	\begin{eqnarray}
		\nonumber
		C_1 \phi_{min} A \,\, \leq \,\, \Vert \cdot \Vert^2_{l, \phi, I} \,\, \leq \,\, C_2 \phi_{max} A,
		\\
		\label{Eq: dis. left and right norm equivalency}
		\\
		\nonumber
		C_3 \phi_{min} B \,\, \leq \,\, \Vert \cdot \Vert^2_{r, \phi, I} \,\, \leq C_4 \phi_{max} B,
	\end{eqnarray}
	where
	\begin{eqnarray}
		\nonumber
		A = 
		\Vert \cdot \Vert^2_{L^2(I)} + 
		\int_{\alpha_{min}}^{\alpha_{max}}  \norm{ \, \prescript{RL}{x_L}{\mathcal{D}}_{x}^{\alpha}(\cdot)  \, }^2_{L^2(I)} \,\, d\alpha,
		\\
		\nonumber
		B = 
		\Vert \cdot \Vert^2_{L^2(I)} + 
		\int_{\alpha_{min}}^{\alpha_{max}}  \norm{ \, \prescript{RL}{x}{\mathcal{D}}_{x_R}^{\alpha}(\cdot)  \, }^2_{L^2(I)} \,\, d\alpha ,
	\end{eqnarray}
	and $C_1$, $C_2$, $C_3$, and $C_4$ are positive constants.
	From \cite{Li2009}, we know that $\forall \alpha = s$ fixed, $\Vert \cdot \Vert_{l, s, I}  \sim   \Vert \cdot \Vert_{r, s, I} $ that is $ \norm{ \, \prescript{RL}{x_L}{\mathcal{D}}_{x}^{s}(\cdot)  \, }^2_{L^2(I)}  \sim   \norm{ \, \prescript{RL}{x}{\mathcal{D}}_{x_R}^{s}(\cdot)  \, }^2_{L^2(I)} $, hence let $\vec{s} = \lbrace s_1, s_2, \cdots , s_Q \rbrace$, and similarly $\forall s = s_q$ the aforementioned equivalence holds. Therefore, any linear combination of
	\begin{equation*}
		\sum_{q=1}^{Q} \, w_q \, \norm{ \, \prescript{RL}{x_L}{\mathcal{D}}_{x}^{s}(\cdot)  \, }^2_{L^2(I)}  
		\sim   
		\sum_{q=1}^{Q} \, w_q \, \norm{ \, \prescript{RL}{x}{\mathcal{D}}_{x_R}^{s}(\cdot)  \, }^2_{L^2(I)} .
	\end{equation*}
	Taking $Q \longrightarrow \infty$ and assuming $\lbrace w_q , s_q \rbrace$ to be Riemann integral weights and points in $[\alpha_{min} , \alpha_{max}]$, 
	\begin{equation*}
		\int_{\alpha_{min}}^{\alpha_{max}} \, \norm{ \, \prescript{RL}{x_L}{\mathcal{D}}_{x}^{s}(\cdot)  \, }^2_{L^2(I)}  
		\sim   
		\int_{\alpha_{min}}^{\alpha_{max}} \, \norm{ \, \prescript{RL}{x}{\mathcal{D}}_{x_R}^{s}(\cdot)  \, }^2_{L^2(I)} .
	\end{equation*}
	By adding $\Vert \cdot \Vert^2_{L^2(I)}$ to the both sides of the above equivalence, we obtain $A \sim B$; and by \eqref{Eq: dis. left and right norm equivalency}, $ \Vert \cdot \Vert_{l, \phi, I} \sim \Vert \cdot \Vert_{r, \phi, I}$.

	In addition, from \cite{Li2009}, we know that $\forall \alpha = s$ fixed, $\Vert \cdot \Vert^2_{s, I}  \sim  \Vert \cdot \Vert^2_{l, s, I}$. Let $\vec{s} = \lbrace s_1, s_2, \cdots , s_Q \rbrace$ thus $\forall s = s_q \in \vec{s},  \,\,  \Vert \cdot \Vert^2_{s_q, I} \sim  \Vert \cdot \Vert^2_{l, s_q, I}$. Therefore, for any linear combination of $\sum_{q=1}^{Q} \, w_q \, \Vert \cdot \Vert^2_{s_q, I} \sim \sum_{q=1}^{Q} \, w_q \, \Vert \cdot \Vert^2_{l, s_q, I}$. Taking $Q \longrightarrow \infty$, we obtain:
	\begin{equation*}
		\int_{\alpha_{min}}^{\alpha_{max}} \, \Vert \cdot \Vert^2_{\alpha, I} \, d\alpha 
		\,\, \sim \,\,
		\int_{\alpha_{min}}^{\alpha_{max}} \, \Vert \cdot \Vert^2_{l, \alpha, I} \, d\alpha ,
	\end{equation*}
	where the right hand side of the equivalence is
	\begin{eqnarray}
		\nonumber
		\int_{\alpha_{min}}^{\alpha_{max}} \left(\Vert \cdot \Vert^2_{L^2(I)} + \norm{ \, \prescript{RL}{x_L}{\mathcal{D}}_{x}^{\alpha}(\cdot)  \, }^2_{L^2(I)}\right) \,\, d\alpha
		&=&
		(\alpha_{max} - \alpha_{min}) \Vert \cdot \Vert^2_{L^2(I)} + \int_{\alpha_{min}}^{\alpha_{max}} \norm{ \, \prescript{RL}{x_L}{\mathcal{D}}_{x}^{\alpha}(\cdot)  \, }^2_{L^2(I)} \,\, d\alpha,
		\\
		\nonumber
		&\sim& \Vert \cdot \Vert^2_{l, \phi, I} \, \sim \, \Vert \cdot \Vert^2_{r, \phi, I}.
	\end{eqnarray}
	Therefore,
	\begin{equation}
		\label{Eq: dis. norm equivalency proof - temp1}
		\int_{\alpha_{min}}^{\alpha_{max}} \, \Vert \cdot \Vert^2_{\alpha, I} \, d\alpha 
		\,\, \sim \,\,
		\Vert \cdot \Vert^2_{l, \phi, I} \, \sim \, \Vert \cdot \Vert^2_{r, \phi, I}.
	\end{equation}
	We can also show that
	\begin{eqnarray}
		\nonumber
		\phi_{min}
		\int_{\alpha_{min}}^{\alpha_{max}} \, \norm{ \, (1+\vert \omega \vert^2 )^{\frac{\alpha}{2}}\mathcal{F}(\cdot)(\omega) \,  }^2_{L^2(\mathbb{R})} \,\, d\alpha
		\leq
		\Vert \cdot \Vert^2_{\phi, \mathbb{R}} 
		\leq
		\phi_{max}
		\int_{\alpha_{min}}^{\alpha_{max}} \, \norm{ \, (1+\vert \omega \vert^2 )^{\frac{\alpha}{2}}\mathcal{F}(\cdot)(\omega) \,  }^2_{L^2(\mathbb{R})} \,\, d\alpha .
		&
	\end{eqnarray}
	Because of the non-negativity of the norms, we have
	\begin{eqnarray}
		\nonumber
		&
		\phi_{min}
		\inf_{\tilde{v} \in \prescript{\phi}{}{\mathcal H}(\mathbb{R}), \tilde{v}|_{I}=(\cdot) }
		\int_{\alpha_{min}}^{\alpha_{max}} \, \norm{ \, (1+\vert \omega \vert^2 )^{\frac{\alpha}{2}}\mathcal{F}(\tilde{v})(\omega) \,  }^2_{L^2(\mathbb{R})} \,\, d\alpha
		&
		\\
		\label{Eq: dis. norm equivalency in interval I}
		&
		\leq
		\inf_{\tilde{v} \in \prescript{\phi}{}{\mathcal H}(\mathbb{R}), \tilde{v}|_{I}=(\tilde{v}) }
		\Vert \cdot \Vert^2_{\phi, \mathbb{R}} 
		\leq
		&
		\\
		\nonumber
		&
		\phi_{max}
		\inf_{\tilde{v} \in \prescript{\phi}{}{\mathcal H}(\mathbb{R}), \tilde{v}|_{I}=(\cdot) }
		\int_{\alpha_{min}}^{\alpha_{max}} \, \norm{ \, (1+\vert \omega \vert^2 )^{\frac{\alpha}{2}}\mathcal{F}(\tilde{v})(\omega) \,  }^2_{L^2(\mathbb{R})} \,\, d\alpha .
		&
	\end{eqnarray}
	In general, $\prescript{\phi}{}{\mathcal H}(\mathbb{R}) \subset H^{\alpha_{max}}(\mathbb{R})$, $\forall \alpha \in [\alpha_{min} , \alpha_{max}]$. Therefore, we have:
	\begin{eqnarray}
		\nonumber
		&&
		\inf_{\tilde{v} \in \prescript{\phi}{}{\mathcal H}(\mathbb{R}), \tilde{v}|_{I}=(\cdot) }
		\int_{\alpha_{min}}^{\alpha_{max}} \, \norm{ \, (1+\vert \omega \vert^2 )^{\frac{\alpha}{2}}\mathcal{F}(\tilde{v})(\omega) \,  }^2_{L^2(\mathbb{R})} \,\, d\alpha,
		\\
		\nonumber
		& = & 
		\int_{\alpha_{min}}^{\alpha_{max}} \, 
		\inf_{\tilde{v} \in \prescript{\phi}{}{\mathcal H}(\mathbb{R}), \tilde{v}|_{I}=(\cdot) } \,\,
		\norm{ \, (1+\vert \omega \vert^2 )^{\frac{\alpha}{2}}\mathcal{F}(\tilde{v})(\omega) \,  }^2_{L^2(\mathbb{R})} \,\, d\alpha,
		\\
		\nonumber
		& \leq &
		\mathcal{C}
		\int_{\alpha_{min}}^{\alpha_{max}} \, 
		\inf_{\tilde{v} \in H^{\alpha}(\mathbb{R}), \tilde{v}|_{I}=(\cdot) } \,\,
		\norm{ \, (1+\vert \omega \vert^2 )^{\frac{\alpha}{2}}\mathcal{F}(\tilde{v})(\omega) \,  }^2_{L^2(\mathbb{R})} \,\, d\alpha,
		\\
		\nonumber
		& = &
		\mathcal{C}
		\int_{\alpha_{min}}^{\alpha_{max}} \, 
		\norm{ \, \cdot \,  }^2_{\alpha , I} \,\, d\alpha.
	\end{eqnarray}
	However, for some choices of $\phi = \delta(\alpha - \alpha_{min})$ and thus $\prescript{\phi}{}{\mathcal H}(\mathbb{R}) \supset H^{\alpha_{max}}(\mathbb{R})$, $\forall \alpha \in [\alpha_{min} , \alpha_{max}]$. Therefore,
	\begin{eqnarray}
		\nonumber
		&&
		\int_{\alpha_{min}}^{\alpha_{max}} \, 
		\inf_{\tilde{v} \in \prescript{\phi}{}{\mathcal H}(\mathbb{R}), \tilde{v}|_{I}=(\cdot) } \,\,
		\norm{ \, (1+\vert \omega \vert^2 )^{\frac{\alpha}{2}}\mathcal{F}(\tilde{v})(\omega) \,  }^2_{L^2(\mathbb{R})} \,\, d\alpha,
		\\
		\nonumber
		& \geq &
		\widetilde{\mathcal{C}}
		\int_{\alpha_{min}}^{\alpha_{max}} \, 
		\inf_{\tilde{v} \in H^{\alpha}(\mathbb{R}), \tilde{v}|_{I}=(\cdot) } \,\,
		\norm{ \, (1+\vert \omega \vert^2 )^{\frac{\alpha}{2}}\mathcal{F}(\tilde{v})(\omega) \,  }^2_{L^2(\mathbb{R})} \,\, d\alpha,
		\\
		\nonumber
		& = &
		\widetilde{\mathcal{C}}
		\int_{\alpha_{min}}^{\alpha_{max}} \, 
		\norm{ \, \cdot \,  }^2_{\alpha , I} \,\, d\alpha,
	\end{eqnarray}
	which by \eqref{Eq: dis. norm equivalency in interval I} and \eqref{Eq: NormFrac Sobolev on Interval}, we get
	\begin{equation}
		\label{Eq: dis. norm equivalency proof - temp2}
		\int_{\alpha_{min}}^{\alpha_{max}} \, 
		\norm{ \, \cdot \,  }^2_{\alpha , I} \,\, d\alpha
		\sim
		\norm{ \, \cdot \,  }^2_{\phi , I}.
	\end{equation}
	Comparing \eqref{Eq: dis. norm equivalency proof - temp1} and \eqref{Eq: dis. norm equivalency proof - temp2}, we have 
	\begin{equation*}
		\norm{ \, \cdot \,  }^2_{\phi , I} \sim \norm{ \, \cdot \,  }^2_{l, \phi , I}  \sim \norm{ \, \cdot \,  }^2_{r, \phi , I}.
	\end{equation*}
\end{proof}
\begin{rem}
	%
	We note that if $\phi = \delta(\alpha - s)$, we recover the standard $\prescript{RL}{x_L}{\mathcal{D}}_{x}^{s}(u) = f$, where the equivalence between the corresponding $\Vert \cdot \Vert_{l, s, I}$, $\Vert \cdot \Vert_{r, s, I}$, and $\Vert \cdot \Vert_{s, I}$ has been already established. Moreover, we note that for the case $\phi \in L^1(\, [\alpha_{min} , \alpha_{max} \,)$ containing finitely many singularities at $\alpha_1, \alpha_2, \cdots, \alpha_m$, the whole interval $[\alpha_{min} , \alpha_{max}]$ and the integration can be written as
	\begin{equation*}
		\int_{\alpha_{min}}^{\alpha_1} \phi(\alpha) \, \prescript{RL}{x_L}{\mathcal{D}}_{x}^{s}(u) \, d\alpha
		+ \int_{\alpha_1}^{\alpha_2} \phi(\alpha) \, \prescript{RL}{x_L}{\mathcal{D}}_{x}^{s}(u) \, d\alpha
		+ \cdots 
		+ \int_{\alpha_m}^{\alpha_{max}} \phi(\alpha) \, \prescript{RL}{x_L}{\mathcal{D}}_{x}^{s}(u) \, d\alpha,
	\end{equation*}
	where all the previous steps in the proof can apply in each interval.
\end{rem}

%
\section{Proof of Lemma \eqref{Lem: fractional integ-by-parts 1 and 2}}
\label{Sec: App. proof fractional integ-by-parts 1 and 2} 
%

%
\begin{proof}
	Since $u(a) =0$, by \eqref{Eq: Property of RL deriv} $\prescript{}{a}{\mathcal{D}}_{x}^{2\mu} u = \prescript{}{a}{\mathcal{D}}_{x}^{\mu} (\prescript{}{a}{\mathcal{D}}_{x}^{\mu} u)$. Taking $\widetilde{u}(x) = \prescript{}{a}{\mathcal{D}}_{x}^{\mu} u$, we have
	\begin{eqnarray}
		\nonumber
		( \prescript{}{a}{\mathcal{D}}_{x}^{2\mu} u \,,\,v ) &=& ( \prescript{}{a}{\mathcal{D}}_{x}^{\mu} \widetilde{u} \,,\,v ),
		\\ \nonumber
		&=&
		\frac{1}{\Gamma(1-\mu)} \int_{a}^{b} \Big[  \frac{d}{dx} \int_{a}^{x}  \frac{\widetilde{u}(s) ds}{(x-s)^{\mu}}  \Big] v(x) dx,
		\\ \nonumber
		&=&
		\Big\{ \frac{v(x)}{\Gamma(1-\mu)} \int_{a}^{x}  \frac{\widetilde{u}(s) ds}{(x-s)^{\mu}}    \Big\}_{x=a}^{x=b}
		\\ \nonumber
		&-&
		\frac{1}{\Gamma(1-\mu)} \int_{a}^{b} \Big[ \int_{a}^{x}  \frac{\widetilde{u}(s) ds}{(x-s)^{\mu}}  \Big] \frac{dv(x)}{dx} dx, \quad by\,\, v(b)=0,
		\\ 
		\label{Eq: derivation integ-by-parts1}
		&=&
		-v(a)\,\lim_{x\rightarrow a} \prescript{}{a}{\mathcal{I}}_{x}^{1-\mu} \widetilde{u}
		-
		\frac{1}{\Gamma(1-\mu)} \int_{a}^{b}  \int_{s}^{b}  \frac{\frac{dv(x)}{dx} dx}{(x-s)^{\mu}}   \widetilde{u}(s) ds, 
	\end{eqnarray} 
	which make sense when the interior term $\int_{s}^{b}  \frac{\frac{dv(x)}{dx} dx}{(x-s)^{\mu}}$ is integrable in $(a,b)$. Taking into account that $v(a)$ is bounded, we can show that the boundary term $v(a)\,\lim_{x\rightarrow a} \prescript{}{a}{\mathcal{I}}_{x}^{1-\mu} \widetilde{u}$ also vanishes as 
	\begin{eqnarray}
		\label{Eq: fracinteg to zero}
		\lim_{x\rightarrow a} \Big| \prescript{}{a}{\mathcal{I}}_{x}^{1-\mu} \widetilde{u} \Big|  &=& \lim_{x\rightarrow a} \frac{1}{\Gamma(1-\mu)} \Big| \int_{a}^{x}  \frac{\widetilde{u}(s) ds}{(x-s)^{\mu}} \Big|, 
		\\ \nonumber
		&\leq & 
		\lim_{x\rightarrow a} \frac{1}{\Gamma(1-\mu)} \Big| \int_{a}^{x}  \frac{ ds}{(x-s)^{\mu}} \Big| \Vert \widetilde{u} \Vert_{L^{\infty}},
		\\ \nonumber
		&= & 
		\lim_{x\rightarrow a} \frac{1}{\Gamma(1-\mu)}
		\frac{(x-a)^{1-\mu}}{1-\mu}
		\Vert \widetilde{u} \Vert_{L^{\infty}} =0.
	\end{eqnarray}
	Moreover, it is easy to check that
	\begin{eqnarray}
		\nonumber
		\frac{d}{ds} \int_{s}^{b} \frac{v(x) dx}{(x-s)^{\mu}}
		&=& 
		\frac{d}{ds} \Big\{ 
		\frac{v(x)(x-s)^{1-\mu}}{1-\mu}\Big|_{x=s}^{x=b}
		- 
		\frac{1}{1-\mu}  \int_{s}^{b} \frac{dv(x)}{dx}(x-s)^{1-\mu} dx
		\Big\},
		\\ \nonumber
		&=& 
		\frac{d}{ds} \Big\{ 0 - 
		\frac{1}{1-\mu}  \int_{s}^{b} \frac{dv(x)}{dx}(x-s)^{1-\mu} dx
		\Big\},
		\\ 
		\label{Eq: derivation integ-by-parts2}
		&=& 
		\int_{s}^{b}  \frac{\frac{dv(x)}{dx} dx}{(x-s)^{\mu}}.
	\end{eqnarray}
	Now, by substituting \eqref{Eq: derivation integ-by-parts2} into \eqref{Eq: derivation integ-by-parts1}, we obtain 
	\begin{eqnarray}
		\nonumber
		( \prescript{}{a}{\mathcal{D}}_{x}^{2\mu} u \,,\,v ) &=&
		\int_{a}^{b}  
		\widetilde{u}(s) 
		\Big\{
		\frac{1}{\Gamma(1-\mu)} 
		(\frac{-d}{ds}) \int_{s}^{b} \frac{v(x) dx}{(x-s)^{\mu}}
		\Big\}
		ds,
		\\ \nonumber
		&=&
		(  \widetilde{u} \,,\,\prescript{}{x}{\mathcal{D}}_{b}^{\mu}v ),
	\end{eqnarray}
	when $\prescript{}{x}{\mathcal{D}}_{b}^{\mu}v$ is well-defined and is integrable in the interval $[a,b]$. 
\end{proof}
%

%
\section{Proof of Theorem \eqref{Thm: quadrature convergence}}
\label{Sec: App. proof quadrature convergence}
%
\begin{proof}

	\noindent \underline{Part A}:
	
	The Jacobi polynomials, $P^{\alpha , \beta}_{n}(\xi)$, can be constructed via the three-term recursion relation.
	By letting $\beta = -\alpha$,
	the corresponding tree term recursion reduces to 
	\begin{equation}
		\label{Eq: Three term recursoin reduced}
		P^{\alpha , -\alpha}_{n+1}(\xi)  = \frac{(2n + 1)}{(n + 1)} \, \xi \,\, P^{\alpha , -\alpha}_{n}(\xi) - \frac{(n - \alpha^2)}{n (n + 1)} \, \xi \,\, P^{\alpha , -\alpha}_{n-1}(\xi), 
	\end{equation}
	and therefore, the Jacobi polynomials evaluated at $\xi = \xi_0 \in [-1 , 1]$ are obtained in the following standard form
	\begin{align}
		P^{\alpha , -\alpha}_{0}(\xi_0) & = \varmathbb{P}_0(\alpha) = 1, & & : \text{zeroth order in } \alpha&
		\\
		\nonumber
		P^{\alpha , -\alpha}_{1}(\xi_0) & = \varmathbb{P}_1(\alpha) = \alpha + \xi_0, & & : \text{linear in } \alpha&
		\\
		\nonumber
		P^{\alpha , -\alpha}_{2}(\xi_0) & = \varmathbb{P}_2(\alpha) = \frac{1}{2} \, \alpha^2 + \frac{3}{2} \, \xi_0 \, \alpha + \frac{3 \, \xi_0^2 -1}{2},  & & : \text{quadratic  in } \alpha&
		\\
		\nonumber
		P^{\alpha , -\alpha}_{3}(\xi_0) & = \varmathbb{P}_3(\alpha) = \frac{1}{6} \, \alpha^3 + \xi_0 \, \alpha^2 + \frac{15 \, \xi_0^2 - 4}{6} \, \alpha + \frac{5 \, \xi_0^3 - 3 \, \xi_0}{2}.  & & : \text{cubic  in } \alpha&
	\end{align}
	Now, let $n = k$, thus, $P^{\alpha , -\alpha}_{k}(\xi_0)$ and $P^{\alpha , -\alpha}_{k-1}(\xi_0)$ are respectively polynomials of order $k$ and $k-1$ in $\alpha$. Using  \eqref{Eq: Three term recursoin reduced} for $n = k + 1$, we get   
	\begin{equation*}
		P^{\alpha , -\alpha}_{k+1}(\xi_0) = \frac{(2k + 1)}{(k + 1)} \, \xi_0 \,\, P^{\alpha , -\alpha}_{k}(\xi_0) - \frac{(k - \alpha^2)}{k (k + 1)} \, \xi_0 \,\, P^{\alpha , -\alpha}_{k-1}(\xi_0),
	\end{equation*}
	which is a polynomials of order $k+1$ in $\alpha$ due to the second term. Hence, by mathematical induction, $P^{\alpha , -\alpha}_{n}(\xi_0) = \varmathbb{P}_n(\alpha)$ is a polynomial of order $n$ in $\alpha$ $\forall \xi_0 \in [-1 , 1]$. Similarly with the same argument, we can show that $P^{-\alpha , \alpha}_{n}(\xi_0) = \varmathbb{P}_n(-\alpha)$ is also a polynomial of order $n$ in $\alpha$ $\forall \xi_0 \in [-1 , 1]$.

	\vspace{0.2 in}
	\noindent \underline{Part B}:
	
	The inner integral of the discrete distributed bilinear form \eqref{Eq: discrete distributed bilinear form UN rep 3} can be written as
	\begin{align}
		\label{Eq: Integarl in alpha 1}
		&
		\int_{\alpha_{min}}^{\alpha_{max}} 
		\phi(\alpha) \, (\frac{2}{T})^{\alpha} \,
		\frac{\Gamma(n +\mu)}{\Gamma(n + \eta)}  \frac{\Gamma(k +\mu)}{\Gamma(k + \eta)}  
		\prescript{(1)}{}{ \mathcal{P}}_{n}^{\eta}(\xi) \,\,
		\prescript{(2)}{}{ \mathcal{P}}_{k}^{\eta}(\xi) \,\,
		\, d\alpha = 
		\\
		\nonumber
		&
		\Gamma(n +\mu) \Gamma(k +\mu)
		\int_{\alpha_{min}}^{\alpha_{max}} 
		\phi(\alpha) \,\, (\frac{2}{T})^{\alpha} \,
		\frac{(1+\xi)^{\eta}}{\Gamma(n + \eta)}  \frac{(1-\xi)^{\eta}}{\Gamma(k + \eta)}  
		\,\,
		P^{-\eta , \eta}_{n-1}(\xi) \,\,
		P^{\eta , -\eta}_{k-1}(\xi) \,\,
		d\alpha,
	\end{align}
	in which $\eta = \mu - \alpha/2$. By theorem \eqref{Thm: quadrature convergence} part A, $P^{\frac{\alpha}{2}-\mu , \mu-\frac{\alpha}{2}}_{n-1}(\xi)$ and $P^{\, \mu - \frac{\alpha}{2} , \frac{\alpha}{2} - \mu}_{k-1}(\xi)$ are polynomials in $\alpha$ of order $n-1$ and $k-1$, respectively, $\forall \xi \in [-1 , 1]$, and $\mu$ fixed. Thus,
	\begin{align}
		\label{Eq: Jacobi in terms of alpha n}
		P^{\frac{\alpha}{2}-\mu , \mu-\frac{\alpha}{2}}_{n-1}(\xi) \,\,
		&=
		\sum_{r=0}^{n-1} \sigma_r \, P_r(\alpha) ,
		\\
		\label{Eq: Jacobi in terms of alpha k}
		P^{\, \mu-\frac{\alpha}{2} , \frac{\alpha}{2}-\mu}_{k-1}(\xi) \,\,
		&=
		\sum_{l=0}^{k-1} \widetilde{\sigma}_l\, P_l(\alpha).
	\end{align}
	By plugging \eqref{Eq: Jacobi in terms of alpha n} and \eqref{Eq: Jacobi in terms of alpha k} into \eqref{Eq: Integarl in alpha 1}, we obtain
	\begin{align}
		\nonumber
		&
		\int_{\alpha_{min}}^{\alpha_{max}}  \phi(\alpha)  \,\, (\frac{2}{T})^{\alpha} \,
		\frac{(1+\xi)^{\eta}}{\Gamma(n + \eta)}  \frac{(1-\xi)^{\eta}}{\Gamma(k + \eta)}  
		\,\,
		P^{-\eta , \eta}_{n-1}(\xi) \,\,
		P^{\eta , -\eta}_{k-1}(\xi) \,\,
		d\alpha =
		\\
		\label{Eq: Integarl in alpha 2}
		&
		\int_{\alpha_{min}}^{\alpha_{max}}  
		\phi(\alpha)  \,\,  \mathcal{W}_{kn}^{\,\xi,\mu}(\alpha)
		\,\,
		\sum_{r=0}^{n-1} \sigma_r \, P_r(\alpha) \,\,
		\sum_{l=0}^{k-1} \widetilde{\sigma}_l\, P_l(\alpha) \,\,
		d\alpha,
	\end{align}
	in which
	\begin{equation}
		\nonumber
		\mathcal{W}_{kn}^{\, \xi,\mu}(\alpha) = 
		(\frac{2}{T})^{\alpha} \,\,
		\frac{(1-\xi)^{\mu - \alpha}}{\Gamma(n + \mu - \alpha)}  \frac{(1+\xi)^{\mu - \alpha}}{\Gamma(k + \mu - \alpha)}
	\end{equation}
	is \textit{smooth} in any compact support in $[\alpha_{min} , \alpha_{max}]$ and its polynomial expansion $\mathcal{W}_{kn}^{\, \xi,\mu}\big|_{N} (\alpha) = \mathcal{W}_{N}(\alpha) = \sum_{q=0}^{N} \,\, \rho_q \, P_q(\alpha)$ converges exponentially i.e., 
	\begin{equation}
		\label{Eq: App. convergence of smooth function}
		\norm{\mathcal{W}_{kn}^{\, \xi,\mu}(\alpha) - \mathcal{W}_{N}(\alpha)} \leq c_1 \exp(-c_2 N^{c_3}),
	\end{equation}
	in which $\norm{.}$ denotes the $L^2$-norm in $[\alpha_{min} , \alpha_{max}]$. If the distribution function $\phi \in H^r([\alpha_{min} , \alpha_{max}])$, $r>0$, we have the following projection error:
	\begin{equation}
		\label{Eq: App. convergence of weight function}
		\norm{\phi(\alpha) - \phi_N (\alpha)} \leq c_4 \, N^{-r} \norm{\phi}_{H^r([\alpha_{min} , \alpha_{max}])},
	\end{equation}
	where $\phi_N (\alpha) = \sum_{n=0}^{N} \,\, \widetilde{\rho}_n \, P_n(\alpha)$. Consequently, the integrand in \eqref{Eq: Integarl in alpha 2} can be well-approximated via 
	\begin{align}
		\label{Eq: Integarl in alpha integrand}
		\phi(\alpha)  \,\,  \mathcal{W}_{kn}^{\, \xi,\mu}(\alpha) \,\,
		\sum_{r=0}^{n-1} \sigma_r \, P_r(\alpha) \,\,
		\sum_{l=0}^{k-1} \widetilde{\sigma}_l\, P_l(\alpha) \,\,
		\approx
		\phi_N(\alpha)  \,\,  \mathcal{W}_N(\alpha) \,\,
		\sum_{r=0}^{n-1} \sigma_r \, P_r(\alpha) \,\,
		\sum_{l=0}^{k-1} \widetilde{\sigma}_l\, P_l(\alpha).
	\end{align}
	Next, let
	\begin{align}
		\label{Eq: Integarl in alpha exact}
		\mathcal{I} 
		& = 
		\int_{\alpha_{min}}^{\alpha_{max}}  
		\phi(\alpha)  \,\,  \mathcal{W}_{kn}^{\, \xi,\mu}(\alpha)
		\,\,
		\sum_{r=0}^{n-1} \sigma_r \, P_r(\alpha) \,\,
		\sum_{l=0}^{k-1} \widetilde{\sigma}_l\, P_l(\alpha) \,\,
		d\alpha,
		\\
		\nonumber
		\mathcal{I}_N 
		& =
		\int_{\alpha_{min}}^{\alpha_{max}}  
		\phi_N(\alpha)  \,\,  \mathcal{W}_N(\alpha)
		\,\,
		\sum_{r=0}^{n-1} \sigma_r \, P_r(\alpha) \,\,
		\sum_{l=0}^{k-1} \widetilde{\sigma}_l\, P_l(\alpha) \,\,
		d\alpha,
	\end{align}
	where $\mathcal{I}_N$ can be \textit{accurately} calculated via
	\begin{align}
		\label{Eq: Integarl in alpha approximate}
		\mathcal{I}_N 
		& =
		\sum_{q = 1}^{Q} 
		\, \widetilde{w}_q \, \phi_N(\alpha_q) \, \mathcal{W}_N(\alpha_q)  \,\,
		\sum_{r=0}^{n-1} \sigma_r \, P_r(\alpha_q) \,\,
		\sum_{l=0}^{k-1} \widetilde{\sigma}_l\, P_l(\alpha_q),
	\end{align}
	employing a Gauss-Legendre quadrature rule, provided $Q = 2 N$. Thus by Cauchy-schwarz inequality,
	%
	\begin{align}
		\label{Eq: Integarl in alpha convergency}
		\abs{\mathcal{I} - \mathcal{I}_N}
		\leq 
		\sqrt{\alpha_{min} - \alpha_{max}} \, 
		\norm{
			\phi \, \mathcal{W}_{kn}^{\xi,\mu} \, \sum_{r=0}^{n-1} \sigma_r  P_r \,\, \sum_{l=0}^{k-1} \widetilde{\sigma}_l P_l
			- \phi_N \, \mathcal{W}_{N} \, \sum_{r=0}^{n-1} \sigma_r  P_r \,\, \sum_{l=0}^{k-1} \widetilde{\sigma}_l P_l
		},
	\end{align}
	in which
	\begin{align}
		\label{Eq: App. convergence of smooth function times weight function}
		&\norm{
			\left( \phi \, \mathcal{W}_{kn}^{\,\xi,\mu} \, \sum_{r=0}^{n-1} \sigma_r  P_r \,\, \sum_{l=0}^{k-1} \widetilde{\sigma}_l P_l \right)
			\, - \,
			\left( \phi_N \, \mathcal{W}_{N} \, \sum_{r=0}^{n-1} \sigma_r  P_r \,\, \sum_{l=0}^{k-1} \widetilde{\sigma}_l P_l \right)
		},
		\\
		\nonumber
		\leq \, &
		\norm{\sum_{r=0}^{n-1} \sigma_r  P_r \,\, \sum_{l=0}^{k-1} \widetilde{\sigma}_l P_l}
		\norm{\phi \, \mathcal{W}_{kn}^{\, \xi,\mu} - \phi_N \, \mathcal{W}_{N}}
		,\quad \text{(by H$\ddot{\text{o}}$lder inequality)},
		\\
		\nonumber
		\leq \, &
		c_5
		\norm{\phi \, \mathcal{W}_{kn}^{\, \xi,\mu} - \phi_N \, \mathcal{W}_{N}},
		\\
		\nonumber
		\leq \, &
		c_5
		\norm{\big[(\phi - \phi_N ) + \phi_N \big]
			\big[(\mathcal{W}_{kn}^{\, \xi,\mu} - \mathcal{W}_{N}) + \mathcal{W}_{N}\big] \, 
			- \phi_N \, \mathcal{W}_{N}},
		\\
		\nonumber
		\leq \, &
		c_5
		\norm{
			(\phi - \phi_N ) (\mathcal{W}_{kn}^{\, \xi,\mu} - \mathcal{W}_{N})   
			+ \phi_N (\mathcal{W}_{kn}^{\, \xi,\mu} - \mathcal{W}_{N}) 
			+ (\phi - \phi_N ) \mathcal{W}_{N} 
			+ \phi_N \, \mathcal{W}_{N} 
			- \phi_N \, \mathcal{W}_{N}
		},
		\\
		\nonumber
		\leq \, &
		c_5
		\left(
		\norm{(\phi - \phi_N ) (\mathcal{W}_{kn}^{\, \xi,\mu} - \mathcal{W}_{N})}
		+ \norm{\phi_N (\mathcal{W}_{kn}^{\, \xi,\mu} - \mathcal{W}_{N})}
		+ \norm{(\phi - \phi_N ) \mathcal{W}_{N}}
		\right)
		,\quad \text{(by triangle inequality)},
		\\
		\nonumber
		\leq \, &
		c_5
		\left(
		\norm{(\phi - \phi_N )} \norm{(\mathcal{W}_{kn}^{\, \xi,\mu} - \mathcal{W}_{N})}
		+ \norm{\phi_N} \norm{(\mathcal{W}_{kn}^{\, \xi,\mu} - \mathcal{W}_{N})}
		+ \norm{(\phi - \phi_N )} \norm{\mathcal{W}_{N}}
		\right),
		\\
		\nonumber
		\leq \, &
		c_5 \,
		\left( 
		c_4 \,\, N^{-r} \norm{\phi}_{H^r([\alpha_{min} , \alpha_{max}])} \, . \, c_1 \exp(-c_2 N^{c_3})
		+ \norm{\phi_N} c_1 \exp(-c_2 N^{c_3})
		+ \norm{\mathcal{W}_{N}} c_4 \,\, N^{-r} \norm{\phi}_{H^r([\alpha_{min} , \alpha_{max}])}
		\right),
		\\
		\nonumber
		& \quad\quad \text{(by \eqref{Eq: App. convergence of smooth function} and \eqref{Eq: App. convergence of weight function})},
		\\
		\nonumber
		\leq \, &
		c_6 \, N^{-r} \norm{\phi}_{H^r([\alpha_{min} , \alpha_{max}])}.
	\end{align}
	Hence, by \eqref{Eq: Integarl in alpha convergency} and \eqref{Eq: App. convergence of smooth function times weight function} we can show
	\begin{align}
		\label{Eq: Integarl in alpha convergency 2}
		\abs{\mathcal{I} - \mathcal{I}_N}
		\leq \, 
		\mathscr{C} \, N^{-r} \norm{\phi}_{H^r([\alpha_{min} , \alpha_{max}])},
	\end{align}
	and therefore, by \eqref{Eq: Jacobi in terms of alpha n}, \eqref{Eq: Jacobi in terms of alpha k}, \eqref{Eq: Integarl in alpha exact} and \eqref{Eq: Integarl in alpha approximate}, we obtain
	\begin{align*}
		\bigg|
		&\int_{\alpha_{min}}^{\alpha_{max}}  
		\phi(\alpha) \, (\frac{2}{T})^{\alpha} \,
		\frac{\Gamma(n +\mu)}{\Gamma(n + \eta)}  \frac{\Gamma(k +\mu)}{\Gamma(k + \eta)}  
		\prescript{(1)}{}{ \mathcal{P}}_{n}^{\eta}(\xi) \,\,
		\prescript{(2)}{}{ \mathcal{P}}_{k}^{\eta}(\xi) \,\,
		d\alpha
		\\
		&
		\quad\quad
		-
		\sum_{q=1}^{Q} 
		\, \widetilde{w}_q \, 
		\phi_N(\alpha_q) \, (\frac{2}{T})^{\alpha_q} \,
		\frac{\Gamma(n +\mu)}{\Gamma(n + \eta_q)}  \frac{\Gamma(k +\mu)}{\Gamma(k + \eta_q)}  
		\prescript{(1)}{}{ \mathcal{P}}_{n}^{\eta_q}(\xi) \,\,
		\prescript{(2)}{}{ \mathcal{P}}_{k}^{\eta_q}(\xi) \,\,
		\bigg|
		\\
		&
		\leq
		\mathscr{C} \, Q^{-r} \norm{\phi}_{H^r([\alpha_{min} , \alpha_{max}])}.
	\end{align*}

	\vspace{0.2 in}
	\noindent \underline{Part C}: 
	
	If $\phi(\alpha)$ is smooth, then the approximation $\phi_N(\alpha)$, in \eqref{Eq: App. convergence of weight function}, converges with an exponential accuracy and so does the norm in \eqref{Eq: App. convergence of smooth function times weight function}. Thus,
	\begin{align}
		\label{Eq: Integarl in alpha convergency 3}
		\abs{\mathcal{I} - \mathcal{I}_N}
		\leq \,
		\mathscr{C}_1 \, \text{exp}(- \mathscr{C}_2 \, N^{\mathscr{C}_3}),
	\end{align}
	and therefore, the quadrature rule becomes exponentially accurate in $Q$.

	%

\end{proof}


\newpage
\bibliographystyle{siam}
\bibliography{Distributed_Ref2}

\begin{thebibliography}{10}

\bibitem{Askey1969}
{\sc R.~Askey and J.~Fitch}, {\em Integral representations for
  \uppercase{J}acobi polynomials and some applications}, Journal of
  Mathematical Analysis and Applications, 26 (1969), pp.~411--437.

\bibitem{Atanackovic2009distributional}
{\sc T.M. Atanackovic, L.~Oparnica, and S.~Pilipovi{\'c}}, {\em Distributional
  framework for solving fractional differential equations}, Integral Transforms
  and Special Functions, 20 (2009), pp.~215--222.

\bibitem{atanackovic2009existence}
{\sc T.M. Atanackovic, S.~Pilipovic, and D.~Zorica}, {\em Existence and
  calculation of the solution to the time distributed order diffusion
  equation}, Physica Scripta, 2009 (2009), p.~014012.

\bibitem{atanackovic2009timedis}
{\sc T.~M. Atanackovic, S.~Pilipovic, and D.~Zorica}, {\em Time
  distributed-order diffusion-wave equation. i. \uppercase{V}olterra-type
  equation}, Proceedings of the Royal Society A: Mathematical, Physical and
  Engineering Science, 465 (2009), pp.~1869--1891.

\bibitem{meer01}
{\sc B.~Baeumer, D.~A. Benson, M.M. Meerschaert, and S.~W. Wheatcraft}, {\em
  Subordinated advection-dispersion equation for contaminant transport}, Water
  Resources Research, 37 (2001), pp.~1543--1550.

\bibitem{bhrawy2015spectral}
{\sc A.~H. Bhrawy, E.~H. Doha, D.~Baleanu, and S.~S. Ezz-Eldien}, {\em A
  spectral tau algorithm based on \uppercase{J}acobi operational matrix for
  numerical solution of time fractional diffusion-wave equations}, Journal of
  Computational Physics, 293 (2015), pp.~142--156.

\bibitem{Cao2013}
{\sc J.~Cao and C.~Xu}, {\em A high order schema for the numerical solution of
  the fractional ordinary differential equations}, Journal of Computational
  Physics, 238 (2013), pp.~154--168.

\bibitem{chen2014generalized}
{\sc S.~Chen, J.~Shen, and L.~Wang}, {\em Generalized \uppercase{J}acobi
  functions and their applications to fractional differential equations}, arXiv
  preprint arXiv:1407.8303,  (2014).

\bibitem{Castillo2004Plasma}
{\sc D.~del Castillo-Negrete, B.~A. Carreras, and V.~E. Lynch}, {\em Fractional
  diffusion in plasma turbulence}, Physics of Plasmas (1994-present), 11
  (2004), pp.~3854--3864.

\bibitem{diethelm2001numerical}
{\sc K.~Diethelm and N.~J. Ford}, {\em Numerical solution methods for
  distributed order differential equations}, Fractional Calculus and Applied
  Analysis, 4 (2001), pp.~531--542.

\bibitem{diethelm2009Numerical}
\leavevmode\vrule height 2pt depth -1.6pt width 23pt, {\em Numerical analysis
  for distributed-order differential equations}, Journal of Computational and
  Applied Mathematics, 225 (2009), pp.~96--104.

\bibitem{diethelm2009Distributed}
{\sc Kai Diethelm and Neville~J Ford}, {\em Numerical analysis for
  distributed-order differential equations}, Journal of Computational and
  Applied Mathematics, 225 (2009), pp.~96--104.

\bibitem{Diethelm2004}
{\sc K.~Diethelm, N.~J. Ford, and A.~D. Freed}, {\em Detailed error analysis
  for a fractional \uppercase{A}dams method}, Numerical Algorithms, 36 (2004),
  pp.~31--52.

\bibitem{duong2016deterministic}
{\sc P.~L.~T. Duong, E.~Kwok, and M.~Lee}, {\em Deterministic analysis of
  distributed order systems using operational matrix}, Applied Mathematical
  Modelling, 40 (2016), pp.~1929--1940.

\bibitem{ern2013theory}
{\sc A.~Ern and J.~Guermond}, {\em Theory and practice of finite elements},
  vol.~159, Springer Science \& Business Media, 2013.

\bibitem{ford2014numerical}
{\sc N.~J. Ford, M.~L. Morgado, and M.~Rebelo}, {\em A numerical method for the
  distributed order time-fractional diffusion equation}, in Fractional
  Differentiation and Its Applications (ICFDA), 2014 International Conference
  on, IEEE, 2014, pp.~1--6.

\bibitem{ford2015implicit}
\leavevmode\vrule height 2pt depth -1.6pt width 23pt, {\em An implicit finite
  difference approximation for the solution of the diffusion equation with
  distributed order in time}, Electronic Transactions on Numerical Analysis, 44
  (2015), pp.~289--305.

\bibitem{gao2015some}
{\sc G.~Gao, H.~Sun, and Z.~Sun}, {\em Some high-order difference schemes for
  the distributed-order differential equations}, Journal of Computational
  Physics, 298 (2015), pp.~337--359.

\bibitem{gao2015two}
{\sc G.~Gao and Z.~Sun}, {\em Two alternating direction implicit difference
  schemes with the extrapolation method for the two-dimensional
  distributed-order differential equations}, Computers \& Mathematics with
  Applications, 69 (2015), pp.~926--948.

\bibitem{gao2015one}
\leavevmode\vrule height 2pt depth -1.6pt width 23pt, {\em Two unconditionally
  stable and convergent difference schemes with the extrapolation method for
  the one-dimensional distributed-order differential equations}, Numerical
  Methods for Partial Differential Equations,  (2015).

\bibitem{Gorenflo2002}
{\sc R.~Gorenflo, F.~Mainardi, D.~Moretti, and P.~Paradisi}, {\em Time
  fractional diffusion: a discrete random walk approach}, Nonlinear Dynamics,
  29 (2002), pp.~129--143.

\bibitem{hashim2009homotopy}
{\sc I.~Hashim, O.~Abdulaziz, and S.~Momani}, {\em Homotopy analysis method for
  fractional ivps}, Communications in Nonlinear Science and Numerical
  Simulation, 14 (2009), pp.~674--684.

\bibitem{hu2015implicit}
{\sc X.~Hu, F.~Liu, I.~Turner, and V.~Anh}, {\em An implicit numerical method
  of a new time distributed-order and two-sided space-fractional
  advection-dispersion equation}, Numerical Algorithms,  (2015), pp.~1--15.

\bibitem{Huang2012}
{\sc J~Huang, Y~Tang, and L~Vazquez}, {\em Convergence analysis of a
  block-by-block method for fractional differential equations}, Numer. Math.
  Theor. Methods Appl., 5 (2012), pp.~229--241.

\bibitem{inc2008approx}
{\sc M.~Inc}, {\em The approximate and exact solutions of the space-and
  time-fractional \uppercase{B}urgers equations with initial conditions by
  variational iteration method}, Journal of Mathematical Analysis and
  Applications, 345 (2008), pp.~476--484.

\bibitem{jafari2006solving}
{\sc H.~Jafari and V.~Daftardar-Gejji}, {\em Solving linear and nonlinear
  fractional diffusion and wave equations by \uppercase{A}domian
  decomposition}, Applied Mathematics and Computation, 180 (2006),
  pp.~488--497.

\bibitem{jaishankar2013}
{\sc A.~Jaishankar and G.~H. McKinley}, {\em Power-law rheology in the bulk and
  at the interface: quasi-properties and fractional constitutive equations},
  Proceedings of the Royal Society A: Mathematical, Physical and Engineering
  Science, 469 (2013), p.~20120284.

\bibitem{jaishankar2014}
{\sc Aditya Jaishankar and Gareth~H McKinley}, {\em A fractional k-bkz
  constitutive formulation for describing the nonlinear rheology of multiscale
  complex fluids}, Journal of Rheology (1978-present), 58 (2014),
  pp.~1751--1788.

\bibitem{jha2003evidence}
{\sc R.~Jha, P.~K. Kaw, D.~R. Kulkarni, J.~C. Parikh, and ADITYA Team}, {\em
  Evidence of l{\'e}vy stable process in tokamak edge turbulence}, Physics of
  Plasmas (1994-present), 10 (2003), pp.~699--704.

\bibitem{Khader2011}
{\sc M.~M. Khader}, {\em On the numerical solutions for the fractional
  diffusion equation}, Communications in Nonlinear Science and Numerical
  Simulation, 16 (2011), pp.~2535--2542.

\bibitem{Khader2012}
{\sc M.~M. Khader and A.~S. Hendy}, {\em The approximate and exact solutions of
  the fractional-order delay differential equations using \uppercase{L}egendre
  pseudospectral method}, International Journal of Pure and Applied
  Mathematics, 74 (2012), pp.~287--297.

\bibitem{Kilbass2006}
{\sc A.~A. Kilbass, H.~M. Srivastava, and J.~J. Trujillo}, {\em Theory and
  Applications of Fractional Differential Equations}, Amsterdam,
  Netherlands:Elsevier, 2006.

\bibitem{Langlands2005}
{\sc TAM Langlands and BI~Henry}, {\em The accuracy and stability of an
  implicit solution method for the fractional diffusion equation}, Journal of
  Computational Physics, 205 (2005), pp.~719--736.

\bibitem{li2016numerical}
{\sc X.~Li and B.~Wu}, {\em A numerical method for solving distributed order
  diffusion equations}, Applied Mathematics Letters, 53 (2016), pp.~92--99.

\bibitem{Li2009}
{\sc X.~Li and C.~Xu}, {\em A space-time spectral method for the time
  fractional diffusion equation}, SIAM Journal on Numerical Analysis, 47
  (2009), pp.~2108--2131.

\bibitem{Li2010}
\leavevmode\vrule height 2pt depth -1.6pt width 23pt, {\em Existence and
  uniqueness of the weak solution of the space-time fractional diffusion
  equation and a spectral method approximation}, Communications in
  Computational Physics, 8 (2010), p.~1016.

\bibitem{Lin2007}
{\sc Y.~Lin and C.~Xu}, {\em Finite difference/spectral approximations for the
  time-fractional diffusion equation}, Journal of Computational Physics, 225
  (2007), pp.~1533--1552.

\bibitem{Lubich1983}
{\sc C.~Lubich}, {\em On the stability of linear multistep methods for volterra
  convolution equations}, IMA Journal of Numerical Analysis, 3 (1983),
  pp.~439--465.

\bibitem{Lubich1986}
{\sc Ch~Lubich}, {\em Discretized fractional calculus}, SIAM Journal on
  Mathematical Analysis, 17 (1986), pp.~704--719.

\bibitem{mainardi2007Distributed}
{\sc F.~Mainardi, G.~Pagnini, and R.~Gorenflo}, {\em Some aspects of fractional
  diffusion equations of single and distributed order}, Applied Mathematics and
  Computation, 187 (2007), pp.~295--305.

\bibitem{meral2010fractional}
{\sc F.~C. Meral, T.~J. Royston, and R.~Magin}, {\em Fractional calculus in
  viscoelasticity: an experimental study}, Communications in Nonlinear Science
  and Numerical Simulation, 15 (2010), pp.~939--945.

\bibitem{Metzler2000}
{\sc R.~Metzler and J.~Klafter}, {\em The random walk's guide to anomalous
  diffusion: a fractional dynamics approach}, Physics Reports, 339 (2000),
  pp.~1--77.

\bibitem{Miller93}
{\sc K.~S. Miller and B.~Ross}, {\em An Introduction to the Fractional Calculus
  and Fractional Differential Equations}, New York, NY:John Wiley and Sons,
  Inc., 1993.

\bibitem{morgado2015numerical}
{\sc M.~Morgado and M.~Rebelo}, {\em Numerical approximation of distributed
  order reaction--diffusion equations}, Journal of Computational and Applied
  Mathematics, 275 (2015), pp.~216--227.

\bibitem{naghibolhosseini2015estimation}
{\sc M.~Naghibolhosseini}, {\em Estimation of outer-middle ear transmission
  using \uppercase{DPOAE}s and fractional-order modeling of human middle ear},
  PhD thesis, City University of New York, NY., 2015.

\bibitem{Podlubny99}
{\sc I~Podlubny}, {\em Fractional Differential Equations}, San Diego, CA, USA:
  Academic Press, 1999.

\bibitem{Rawashdeh2006}
{\sc E.~A. Rawashdeh}, {\em Numerical solution of fractional
  integro-differential equations by collocation method}, Applied Mathematics
  and Computation, 176 (2006), pp.~1--6.

\bibitem{samkofractional1993}
{\sc S.~G. Samko, A.~A. Kilbas, and O.~I. Marichev}, {\em Fractional Integrals
  and Derivatives: Theory and Applications}, Gordon and Breach, Yverdon, 1993.

\bibitem{Sanz1988}
{\sc J.~M. Sanz-Serna}, {\em A numerical method for a partial
  integro-differential equation}, SIAM Journal on Numerical Analysis, 25
  (1988), pp.~319--327.

\bibitem{sreenivasan1997phenomenology}
{\sc K.~R. Sreenivasan and R.~A. Antonia}, {\em The phenomenology of
  small-scale turbulence}, Annual Review of Fluid Mechanics, 29 (1997),
  pp.~435--472.

\bibitem{srokowski2008levy}
{\sc T.~Srokowski}, {\em L{\'e}vy flights in nonhomogeneous media:
  distributed-order fractional equation approach}, Physical Review E, 78
  (2008), p.~031135.

\bibitem{Sugimoto1991}
{\sc N.~Sugimoto}, {\em Burgers equation with a fractional derivative;
  hereditary effects on nonlinear acoustic waves}, Journal of Fluid Mechanics,
  225 (1991), p.~4.

\bibitem{Sun2006}
{\sc Z.~Sun and X.~Wu}, {\em A fully discrete difference scheme for a
  diffusion-wave system}, Applied Numerical Mathematics, 56 (2006),
  pp.~193--209.

\bibitem{sweilam2007numerical}
{\sc N.~H. Sweilam, M.~M. Khader, and R.~F. Al-Bar}, {\em Numerical studies for
  a multi-order fractional differential equation}, Physics Letters A, 371
  (2007), pp.~26--33.

\bibitem{wang2010direct}
{\sc H.~Wang, K.~Wang, and T.~Sircar}, {\em A direct o (nlog 2 n) finite
  difference method for fractional diffusion equations}, Journal of
  Computational Physics, 229 (2010), pp.~8095--8104.

\bibitem{wang2015high}
{\sc H.~Wang and X.~Zhang}, {\em A high-accuracy preserving spectral galerkin
  method for the dirichlet boundary-value problem of variable-coefficient
  conservative fractional diffusion equations}, Journal of Computational
  Physics, 281 (2015), pp.~67--81.

\bibitem{wang2011fast}
{\sc K.~Wang and H.~Wang}, {\em A fast characteristic finite difference method
  for fractional advection--diffusion equations}, Advances in Water Resources,
  34 (2011), pp.~810--816.

\bibitem{ye2015compact}
{\sc H.~Ye, F.~Liu, and V.~Anh}, {\em Compact difference scheme for
  distributed-order time-fractional diffusion-wave equation on bounded
  domains}, Journal of Computational Physics, 298 (2015), pp.~652--660.

\bibitem{zayernouri2015tempered}
{\sc M.~Zayernouri, M.~Ainsworth, and G.~E. Karniadakis}, {\em Tempered
  fractional sturm--liouville eigenproblems}, SIAM Journal on Scientific
  Computing, 37 (2015), pp.~A1777--A1800.

\bibitem{Zayernouri14-SIAM-Frac-Delay}
{\sc M.~Zayernouri, W.~Cao, Z.~Zhang, and G.~E. Karniadakis}, {\em Spectral and
  discontinuous spectral element methods for fractional delay equations}, SIAM
  Journal on Scientific Computing, 36 (2014), pp.~B904--B929.

\bibitem{Zayernouri2013}
{\sc M.~Zayernouri and G.~E. Karniadakis}, {\em Fractional
  \uppercase{S}turm-\uppercase{L}iouville eigen-problems: theory and numerical
  approximations}, J. Comp. Physics, 47-3 (2013), pp.~2108--2131.

\bibitem{Zayernouri14-SIAM-Frac-Advection}
\leavevmode\vrule height 2pt depth -1.6pt width 23pt, {\em Discontinuous
  spectral element methods for time-and space-fractional advection equations},
  SIAM Journal on Scientific Computing, 36 (2014), pp.~B684--B707.

\bibitem{Zayernouri_FODEs_2014}
\leavevmode\vrule height 2pt depth -1.6pt width 23pt, {\em Exponentially
  accurate spectral and spectral element methods for fractional odes}, J. Comp.
  Physics, 257 (2014), pp.~460--480.

\bibitem{Zayernouri14-SIAM-Collocation}
\leavevmode\vrule height 2pt depth -1.6pt width 23pt, {\em Fractional spectral
  collocation method}, SIAM Journal on Scientific Computing, 36 (2014),
  pp.~A40--A62.

\bibitem{ZayernouriVariableOrder2015JCP}
\leavevmode\vrule height 2pt depth -1.6pt width 23pt, {\em Fractional spectral
  collocation methods for linear and nonlinear variable order fpdes}, Journal
  of Computational Physics, 293 (2015), pp.~312--338.

\bibitem{zayernouri2016_JCP_Frac_AB_AM}
{\sc M.~Zayernouri and A.~Matzavinos}, {\em Fractional
  \uppercase{A}dams-\uppercase{B}ashforth/\uppercase{M}oulton methods: An
  application to the fractional \uppercase{K}eller--\uppercase{S}egel
  chemotaxis system}, Journal of Computational Physics-In Press,  (2016).

\bibitem{zeng2015numerical}
{\sc F.~Zeng, C.~Li, F.~Liu, and I.~Turner}, {\em Numerical algorithms for
  time-fractional subdiffusion equation with second-order accuracy}, SIAM
  Journal on Scientific Computing, 37 (2015), pp.~A55--A78.

\bibitem{zhang2010galerkin}
{\sc H.~Zhang, F.~Liu, and V.~Anh}, {\em Galerkin finite element approximation
  of symmetric space-fractional partial differential equations}, Applied
  Mathematics and Computation, 217 (2010), pp.~2534--2545.

\end{thebibliography}

\end{document}